\renewcommand{\arraystretch}{1.12}
\newdimen\LineSpace
\tikzset{
    line space/.code={\LineSpace=#1},
    line space=8.5pt
}
\newtheorem{theorem}{Theorem}[section]
\newtheorem{lemma}[theorem]{Lemma}
\newtheorem{proposition}[theorem]{Proposition}
\newtheorem{definition-proposition}[theorem]{Definition-Proposition}
\theoremstyle{definition}
\newtheorem{definition}[theorem]{Definition}
\newtheorem{example}[theorem]{Example}
\newtheorem{observation}[theorem]{Observation}
\theoremstyle{remark}
\newtheorem{remark}[theorem]{Remark}
\newcommand{\AAA}{\mathbb{A}}
\newcommand{\ZZ}{\mathbb{Z}}
\newcommand{\QQ}{\mathbb{Q}}
\newcommand{\RR}{\mathbb{R}}
\newcommand{\CC}{\mathbb{C}}
\newcommand{\PP}{\mathbb{P}}
\newcommand{\XX}{\mathbb{X}}
\newcommand{\bfe}{\mathbf{e}}
\newcommand{\bfs}{\mathbf{s}}
\newcommand{\bfw}{\mathbf{w}}
\newcommand{\calA}{\mathcal{A}}
\newcommand{\calC}{\mathcal{C}}
\newcommand{\calO}{\mathcal{O}}
\newcommand{\calP}{\mathcal{P}}
\newcommand{\calQ}{\mathcal{Q}}
\newcommand{\calT}{\mathcal{T}}
\newcommand{\calV}{\mathcal{V}}
\newcommand{\calX}{\mathcal{X}}
\newcommand{\rmH}{\mathrm{H}}
\newcommand{\fkS}{\mathfrak{S}}
\newcommand{\sfF}{\mathsf{F}}
\newcommand{\sfS}{\mathsf{S}}
\newcommand{\GL}{\operatorname{GL}}
\newcommand{\Hom}{\operatorname{Hom}}
\newcommand{\Ker}{\operatorname{Ker}}
\newcommand{\wt}{\operatorname{wt}}
\newcommand{\Proj}{\operatorname{Proj}}
\newcommand{\mut}{\mathsf{mut}}
\newcommand{\qmut}{\mathsf{Qmut}}
\newcommand{\width}{\mathsf{wd}}
\newcommand{\Newt}{\mathsf{Newt}}
\newcommand{\Gr}{\operatorname{Gr}}
\newcommand{\Trop}{\mathsf{Trop}}
\newcommand{\val}{\operatorname{val}}
\newcommand{\rec}{\operatorname{rec}}
\newcommand{\minlex}{\operatorname{min_{lex}}}
\newcommand{\slLie}{\mathfrak{sl}}
\newcommand{\Conv}{\operatorname{Conv}}
\newcommand{\Cone}{\operatorname{Cone}}
\definecolor{Green}{cmyk}{1,0,1,0.4}
\begin{document}

\title[Combinatorial mutations of Newton--Okounkov polytopes]
{Combinatorial mutations of Newton--Okounkov polytopes arising from plabic graphs}
\author[A Higashitani \and Y. Nakajima]{Akihiro Higashitani \and Yusuke Nakajima} 

\address[A Higashitani]{ Department of Pure and Applied Mathematics, Graduate School of Information Science and Technology, Osaka University, Suita, Osaka 565-0871, Japan}
\email{higashitani@ist.osaka-u.ac.jp}

\address[Y. Nakajima]{Department of Mathematics, Kyoto Sangyo University, Motoyama, Kamigamo, Kita-Ku, Kyoto, Japan, 603-8555}
\email{ynakaji@cc.kyoto-su.ac.jp}


\subjclass[2010]{Primary 14M15; Secondary 14M25, 52B20, 13F60} 
\keywords{Combinatorial mutations, Cluster mutations, Newton--Okounkov bodies, Plabic graphs, Grassmannians.} 

\maketitle

\begin{abstract} 
It is known that the homogeneous coordinate ring of a Grassmannian has a cluster structure, which is induced from the combinatorial structure of a plabic graph. 
A plabic graph is a certain bipartite graph described on the disk, and there is a family of plabic graphs giving a cluster structure of the same Grassmannian. 
Such plabic graphs are related by the operation called square move which can be considered as the mutation in cluster theory. 
By using a plabic graph, we also obtain the Newton--Okounkov polytope which gives a toric degeneration of the Grassmannian. 
The purposes of this article is to survey these phenomena 
and observe the behavior of Newton--Okounkov polytopes under the operation called the combinatorial mutation of polytopes. 
In particular, we reinterpret some operations defined for Newton--Okounkov polytopes using the combinatorial mutation. 
\end{abstract}

\setcounter{tocdepth}{1}
\tableofcontents

\section{Introduction} 
\label{sec_intro}

\subsection{Background} 
A \emph{toric degeneration} of a complex projective variety $\XX$ is a flat family over the affine line $\AAA^1$ 
whose fiber $\XX_0$ over $0$ is a toric variety and the other fibers are isomorphic to $\XX$. 
Since it is a flat family, $\XX$ and $\XX_0$ share many important properties. 
Since the toric variety $\XX_0$ can be studied using the associated combinatorial data (e.g., a polytope or polyhedral fan), 
a toric degeneration gives effective viewpoints to understand the properties of $\XX$. 
A toric degeneration of a projective variety is given by using tropical geometry and Gr\"{o}bner theory. 
More precisely, suppose that the homogeneous coordinate ring of $\XX$ is of the form $A=\CC[x_1,\dots, x_n]/I$. 
Then the tropicalization $\calT(I)$ consisting of weight vectors whose corresponding initial ideals do not contain any monomial 
is a subfan of the Gr\"{o}bner fan of $I$, and weight vectors contained in the same cone $C$ of $\calT(I)$ determine the same initial ideal ${\rm in}_C(I)$. 
If the initial ideal ${\rm in}_C(I)$ associated to a maximal cone $C$ is a prime and binomial ideal, 
in which case $C$ is called a \emph{maximal prime cone}, 
then we have the toric degeneration whose central fiber is the toric variety $\Proj(\CC[x_1,\dots, x_n]/{\rm in}_C(I))$, see e.g., \cite[Theorem~15.17]{Eis}.
On the other hand, Kaveh and Manon showed in \cite{KM} that this toric degeneration can be obtained from a \emph{Newton--Okounkov body}. 
Namely, by using vectors in a maximal prime cone $C$, one can define a valuation $v$ on $A{\setminus}\{0\}$ such that 
the associated graded algebra ${\rm gr}_v(A)$ is isomorphic to $\CC[x_1,\dots, x_n]/{\rm in}_C(I)$. 
Also, we have the Newton--Okounkov body $\Delta(A,v)$ associated to $v$, which turns out to be a rational polytope 
and the toric variety associated to $\Delta(A,v)$ is $\Proj(\CC[x_1,\dots, x_n]/{\rm in}_C(I))$, see e.g., \cite{And,Kav}. 
Furthermore, it was shown in \cite[Theorem~1.11]{KMM} that any toric degeneration comes from a certain valuation in such a way.

\medskip

For the case where $\XX$ is the Grassmannian $\Gr(k,n)$ of $k$-planes in $\CC^n$, the toric degenerations of $\XX$ have been studied actively, and 
there are many studies related to the above constructions, see e.g., \cite{Bos,Bos2,BFFHL,CM,EH,MoSh,RW,SS} and the references therein. 
In this article, we will focus on Newton--Okounkov bodies obtained from \emph{plabic graphs}, 
which are rational polytopes, and hence we will call those \emph{Newton--Okounkov polytopes}. 
A plabic graph is a certain bicolored graph described on the disk (see Definition~\ref{def_plabic}). 
A special class of plabic graphs called ``type $\pi_{k,n}$" gives rise to a labeled seed 
realizing the homogeneous coordinate ring $\CC[\Gr(k,n)]$ of $\Gr(k,n)$ as the cluster algebra \cite{Sco} (see Subsection~\ref{subsec_prelim_plabic}). 
Furthermore, using the combinatorial structure on a plabic graph $G$ of type $\pi_{k,n}$, one defines a valuation on $\CC[\Gr(k,n)]{\setminus}\{0\}$ 
and this determines the Newton--Okounkov polytope $\Delta(G)$ associated to $G$ (see Subsection~\ref{subsec_prelim_NO}). 
This polytope was well studied in \cite{RW}, especially it gives a toric degeneration of the Grassmannian $\Gr(k,n)$ (after rescaling the polytopes), see Theorem~\ref{thm_toric_degeneration}. 
On the other hand, when one defines the cluster algebra, the operation called the \emph{mutation} is important. 
The \emph{labeled seed} associated to $G$ consists of a certain set of variables and the quiver obtained as the dual of $G$. 
In this situation, we can apply the mutation to the quiver associated to $G$, 
and such a mutation can be understood in terms of the moving operations on plabic graphs given in Definition~\ref{def_moves_plabic}. 
Applying the moving operations corresponding to the mutation, we can obtain another plabic graph $G^\prime$ which is also type $\pi_{k,n}$, 
and hence the Newton--Okounkov polytope $\Delta(G^\prime)$ also gives a toric degeneration of $\Gr(k,n)$. 
Thus, it is interesting to understand the relationship between these Newton--Okounkov polytopes. 
In \cite{RW}, Rietsch and Williams showed that the Newton--Okounkov polytopes $\Delta(G)$ and $\Delta(G^\prime)$ 
are related by the \emph{tropicalized cluster mutation} (see Theorem~\ref{thm_trop_map_NO} for more details). 
In addition, there are other results relating polytopes giving toric degenerations of $\Gr(k,n)$, e.g. \cite{Bos,BMN,Cas,CHM,EH,RW}. 
For example, in \cite{EH}, Escobar and Harada investigated a \emph{wall-crossing map}, which is a piecewise-linear map relating 
two Newton--Okounkov polytopes associated to adjacent maximal prime cones in the tropicalization $\calT(I)$. 
Ilten explained such a wall-crossing from a viewpoint of combinatorial mutation of a polytope in \cite[Appendix]{EH}. 
Also, in \cite{CHM}, the authors showed that ``block diagonal matching polytopes", which give toric degenerations of $\Gr(k,n)$, 
are related by the combinatorial mutations and unimodular transformations. 
The \emph{combinatorial mutation} of a polytope is an operation transforming a given polytope into another one 
while keeping essential properties (see Section~\ref{sec_prelim_mutation} for more details). 
This operation was introduced in \cite{ACGK} in the context of the mirror symmetry of Fano manifolds. 
Two polytopes related by a combinatorial mutation share some properties, e.g., their Ehrhart series are the same, and hence so are the volumes, 
the number of lattice points etc.
Thus, it would be worthy of observing a relationship among polytopes giving toric degenerations of $\Gr(k, n)$ from the viewpoint of combinatorial mutations. 

\subsection{Summary and contents} 
\label{subsec_intro_summary}

The purpose of this article is to reinterpret several operations defined for Newton--Okounkov polytopes arising from plabic graphs 
from the viewpoint of combinatorial mutations of polytopes. 

For this purpose, in Section~\ref{sec_prelim_plabic}, we survey Newton--Okounkov polytopes arising from plabic graphs referring to \cite{BFFHL,MaSc,Pos,RW,Sco} etc. 
Precisely, we recall Grassmannians in Subsection~\ref{subsec_Grassmannian}. 
In Subsection~\ref{subsec_prelim_plabic}, we introduce plabic graphs and the move operations on plabic graphs (see Definition~\ref{def_moves_plabic}). 
Then we introduce plabic graphs of type $\pi_{k,n}$ which are related by the move operations (see Proposition~\ref{prop_move_equiv}), 
and each face of a plabic graph of type $\pi_{k,n}$ is labeled by a $k$-subset or a Young diagram. 
In particular, the \emph{rectangle plabic graph} $G_{k,n}^{\rm rec}$ of type $\pi_{k,n}$, which is a plabic graph whose faces are labeled by Young diagrams of rectangle shape, is an important object throughout this article. 
We then introduce the quiver defined as the dual of a plabic graph, 
and give a brief explanation about the construction of the  homogeneous coordinate ring of the Grassmannian $\Gr(k,n)$ as the cluster algebra 
with the labeled seed arising from a plabic graph $G$ of type $\pi_{k,n}$ \cite{Sco}. 
Then, in Subsection~\ref{subsec_prelim_NO}, we explain how to construct the Newton--Okounkov polytope from a plabic graph $G$ of type $\pi_{k,n}$. 
We first define a \emph{perfect orientation} of $G$ and 
consider a collection of directed paths on $G$ called \emph{$J$-flow} where $J$ is a $k$-subset in $[n]=\{1,\dots,n\}$. 
Using a special choice of perfect orientations, we can define the valuation $\val_G$ on $\CC(\Gr(k,n)){\setminus}\{0\}$. 
Then, using the valuation $\val_G$, we define the Newton--Okounkov polytope $\Delta_G$ as in Definition~\ref{def_NObody} and 
this gives a toric degeneration of $\Gr(k,n)$ (see Theorem~\ref{thm_toric_degeneration}). 

In Section~\ref{sec_prelim_mutation}, we survey the combinatorial mutations of polytopes, referring to \cite{ACGK,Hig}. 
We will review two kinds of combinatorial mutations. 
One of those is defined for a lattice polytope $P$ that contains the origin ${\bf 0}$, and another one is defined for the polar dual $P^*$ of $P$. 
Although we mainly use the mutation on the dual side, they are closely associated, thus we introduce both of them. 
We note that the dual one is defined using a piecewise-linear map as in Definition~\ref{def_mutation_polygonM}, 
and this mutation preserves some properties of a polytope, e.g., the volume, the number of lattice points. 

In Section~\ref{sec_tropical_mutation}, we observe the \emph{tropicalized cluster mutation} defined for the Newton--Okounkov polytopes
arising from plabic graphs from the viewpoint of combinatorial mutations. 
Let $\Delta_G$ be the Newton--Okounkov polytope obtained from a plabic graph $G$ of type $\pi_{k,n}$. 
As we mentioned, a certain moving operation on $G$ corresponds to the mutation of the quiver associated to $G$. 
Such an operation makes $G$ another plabic graph $G^\prime$ of type $\pi_{k,n}$, and hence 
we have another Newton--Okounkov polytope $\Delta_{G^\prime}$. 
Then the relationship between $\Delta_G$ and $\Delta_{G^\prime}$ can be understood by using the tropicalized cluster mutation 
as shown in \cite[Corollaries~11.16 and 11.17, Theorem~16.18]{RW} (see also Theorem~\ref{thm_trop_map_NO}). 
In particular, any Newton--Okounkov polytope arising from a plabic graph of type $\pi_{k,n}$ can be related by the tropicalized cluster mutations. 
This tropicalized cluster mutation is a certain piecewise-linear map, and it preserves the number of lattice points (see \cite[Corollary~16.19]{RW}). 
In this point, the tropicalized cluster mutation is similar to the combinatorial mutation on the dual side. 
Thus, we compare these piecewise-linear maps, 
and show that the tropicalized cluster mutation can be written as the composition of a combinatorial mutation and a unimodular transformation, 
see Proposition~\ref{prop_tropical=combmutation}. 
We note that this statement can be considered as the dual of \cite[Proposition~3]{Cas}. 

In Section~\ref{sec_mutation_youngposet}, we focus on two kinds of special polytopes called 
\emph{order polytopes} and \emph{chain polytopes} introduced by Stanley \cite{S86}, 
which are constructed from a partially ordered set (poset), see Section~\ref{sec_mutation_youngposet} for the precise definition. 
For a poset $\Pi$, we denote by $\calO(\Pi)$ (resp., $\calC(\Pi)$) the order polytope (resp., chain polytope) associated to $\Pi$. 
If we consider the poset $\Pi_{k,n}$ whose Hasse diagram takes the form as in Figure~\ref{hasse_poset_kn}, 
then the polytopes $\calO(\Pi_{k,n})$ and $\calC(\Pi_{k,n})$ are meaningful in representation theory. 
First, it is known that lattice points in $\calO(\Pi_{k,n})$ parametrize a basis of finite dimensional 
irreducible representation of the Lie algebra $\slLie_{n+1}$ \cite{GT}. 
On the other hand, lattice points in the chain polytope $\calC(\Pi_{k,n})$ also parametrize a different basis of 
finite dimensional irreducible representation of $\slLie_{n+1}$ \cite{FFL} (this phenomenon was originally conjectured in \cite{Vin}). 
Thus, $\calO(\Pi_{k,n})$ and $\calC(\Pi_{k,n})$ contain the same number of lattice points. 
After these works, $\calO(\Pi_{k,n})$ is called the \emph{Gelfand--Tsetlin $($GT$)$ polytope}, 
and $\calC(\Pi_{k,n})$ is called the \emph{Feigin--Fourier--Littelmann--Vinberg $($FFLV$)$ polytope}. 
The point is that these polytopes are obtained as the Newton--Okounkov polytopes
associated to special types of plabic graphs. 
Precisely, as shown in \cite[Lemma~16.2]{RW} (see also Proposition~\ref{prop_unimodular_GT}), 
the GT polytope is unimodularly equivalent to the Newton--Okounkov polytope associated to the rectangle plabic graph $G_{k,n}^{\rm rec}$ of type $\pi_{k,n}$. 
On the other hand, the FFLV polytope is unimodularly equivalent to the Newton--Okounkov polytope 
associated to the ``dual plabic graph" $(G_{n-k,n}^{\rm rec})^\vee$ of $G_{n-k,n}^{\rm rec}$. 
This was first proved in \cite{FF}, but we will provide another proof in Theorem~\ref{thm_unimodular_FFLV2}. 
A feature of our method is that we give the explicit unimodular transformation realizing this equivalence. 
Whereas, it is known that the GT polytope $\calO(\Pi_{k,n})$ and the FFLV polytope $\calC(\Pi_{k,n})$ are unimodularly equivalent 
via the \emph{transfer map}, which was also introduced by Stanley \cite{S86}. 
The transfer map can also be written as a sequence of combinatorial mutations \cite[Theorem~4.1]{Hig}. 
In our situation, this sequence is constructed by using the partial order of Young diagrams labeling the rectangle plabic graph $G_{k,n}^{\rm rec}$, 
see Theorem~\ref{thm_transfer_mutation}. 
Collecting these results, we have the following diagram for the rectangle plabic graphs $G=G_{k,n}^{\rm rec}$ and $G^\prime=G_{n-k,n}^{\rm rec}$: 
\begin{equation}
\label{diagram_our_results}
\begin{tikzcd}
\Delta_G \arrow[r,"{\rm(b)}"]\arrow[d, "{\rm(a)}"'] & \calO(\Pi_{k,n}) \arrow[d,"{\rm(d)}"]\\
\Delta_{(G^\prime)^\vee} \arrow[r,"{\rm(c)}"']& \calC(\Pi_{k,n})
\end{tikzcd}
\end{equation}

\begin{itemize}
\setlength{\parskip}{0pt} 
\setlength{\itemsep}{3pt}
\item[(a)] A sequence of the tropicalized cluster mutations discussed in Section~\ref{sec_tropical_mutation}, which is interpreted as the composition of combinatorial mutations and unimodular transformations (see Proposition~\ref{prop_tropical=combmutation}). 
\item[(b)] The unimodular transformation given in \cite[Lemma~16.2]{RW} (cf. Proposition~\ref{prop_unimodular_GT}). 
\item[(c)] The unimodular transformation given in Theorem~\ref{thm_unimodular_FFLV2}, see also \cite{FF}. 
\item[(d)] The transfer map which is interpreted as a sequence of combinatorial mutations 
along the poset structure of Young diagrams as in Theorem~\ref{thm_transfer_mutation}. 
\end{itemize}

We note that the recent paper \cite{FK} explains a connection between the GT polytope and the FFLV polytope in more general contexts. 

The tropicalized cluster mutation mentioned above is defined as the tropicalization of the exchange relation which is used for defining the cluster algebra. 
This tropicalization is given by the ``minimum convention". 
On the other hand, the tropicalization with respect to the ``maximum convention" would also appear in the context of wall-crossings in the tropical Grassmannian (see \cite[Remark~5.11]{EH}). 
In Section~\ref{sec_max_tropical}, we first give a brief review of the tropical Grassmannian and its wall-crossing phenomenon. 
We then observe that the tropicalization with respect to the maximum convention appear when 
we consider the wall-crossing maps introduced in \cite{EH} for the valuations associated to plabic graphs. 
We also see that two kinds of tropicalizations are related by the piece-wise linear map used in the combinatorial mutation. 

In Appendix~\ref{appendix_Gr(3,6)}, we apply the statements discussed in this article to the case of the Grassmannian $\Gr(3,6)$. 
In this case, there are 34 plabic graphs of type $\pi_{3,6}$. 
The Newton--Okounkov polytopes associated to them are not necessarily integral, but they are transformed into one another 
by the tropicalized cluster mutations (and hence the composition of combinatorial mutations and unimodular transformations). 
We will give the mutation graph of plabic graphs of type $\pi_{3,6}$, which coincides with the one of the associated Newton--Okounkov polytopes. 
Thus, from this graph, we can find a sequence of mutations connecting $G_{3,6}^{\rm rec}$ and $(G_{3,6}^{\rm rec})^\vee$ 
(or $\Delta_{G_{3,6}^{\rm rec}}$ and $\Delta_{(G_{3,6}^{\rm rec})^\vee}$), which corresponds to the part (a) in \eqref{diagram_our_results}. 
We also give the descriptions of linear transformations $f$ and $g$ satisfying 
$f(\Delta_{G_{3,6}^{\rm rec}})=\calO(\Pi_{3,6})$ and $g(\Delta_{(G_{3,6}^{\rm rec})^\vee})=\calC(\Pi_{3,6})$, 
which respectively correspond to the parts (b) and (c) in \eqref{diagram_our_results}.

\subsection*{Notation and convention}
When we write $\ZZ^S, \QQ^S, \RR^S$ etc., for a finite set $S$, 
these mean $\ZZ^{|S|}, \QQ^{|S|}, \RR^{|S|}$ where $|S|$ is the cardinality of $S$. 

We employ the usual notation used in toric geometry. 
Fix a positive integer $d$. 
Let $N$ be a lattice of rank $d$, and let $M\coloneqq\Hom_\ZZ(N,\ZZ)$. 
We also define $N_\RR\coloneqq N\otimes_\ZZ\RR$ and $M_\RR\coloneqq M\otimes_\ZZ\RR$. 
In the sequel, we often identify $N$ and $M$ with $\ZZ^d$, and $N_\RR$ and $M_\RR$ with $\RR^d$. 
We denote the natural inner product by $\langle\;,\;\rangle: M_\RR\times N_\RR\rightarrow\RR$. 
We recall some fundamental materials on convex geometry. 
A \emph{hyperplane} (resp. \emph{affine half-space}) in $N_\RR$ is a subset defined by 
$\{ u \in N_\RR \mid \langle w,u \rangle = h \}$ (resp. $\{ u \in N_\RR \mid \langle w,u \rangle \geq h \}$) for some $w \in M_\RR$ and $h \in \RR$. 
We call $P \subset N_\RR$ a \emph{polyhedron} if $P$ is defined as the intersection of finitely many affine half-spaces. 
A polyhedron is said to be a \emph{polytope} if it is bounded. 
It is well known that a polytope $P \subset N_\RR$ is described as $\Conv(V)$ for some finite set $V$ in $N_\RR$, 
where $\Conv(V)$ denotes the \emph{convex hull} of $V$, which is the smallest convex set containing $V$. 
Let $P \subset N_\RR$ be a polytope and let $\calV(P)$ denote the set of vertices of $P$. 
We say that $P$ is a \emph{lattice polytope} (or \emph{integral polytope}) if $\calV(P) \subset N$, and 
we say that $P$ is a \emph{rational polytope} if there is a positive integer $n$ such that $nP$ is a lattice polytope. 
Those objects in $M_\RR$ are defined in the similar way. For more details on those notions, consult, e.g., \cite{Sch}. 
We say that two polytopes $P,Q\subset \RR^d$ are \emph{unimodularly equivalent}, denoted by $P\cong Q$, 
if they are transformed into each other by unimoduar transformations. 
Here, we say that a linear transformation is \emph{unimodular} if its representation matrix is in $\GL(d,\ZZ)$. 

\section{Preliminaries on Newton--Okounkov polytopes arising from plabic graphs} 
\label{sec_prelim_plabic}

In this section, we will introduce plabic graphs that give the cluster structures on Grassmannians, and related notions. 
This section is mostly for a review and fixing notation, see e.g. \cite{BFFHL,MaSc,Pos,RW,Sco} for more details. 

\subsection{Grassmannians}\label{subsec_Grassmannian}
Let $n, k$ be integers with $n\ge 1$ and $1\le k\le n-1$. 
We denote by ${ [n] \choose k }$ the set of all $k$-element subsets of $[n]\coloneqq \{1, \dots, n\}$. 
Let $\XX_{k,n}\coloneqq\Gr(k,n)$ be the Grassmannian of $k$-planes in $\CC^n$. 
More precisely, we fix a basis $\{v_1,\dots,v_n\}$ of $\CC^n$. 
We consider a $k$-dimensional subspace $W$ of $\CC^n$ whose basis is $\{w_1,\dots,w_k\}$. 
Thus, this basis can be described as a linear combination of $\{v_1,\dots,v_n\}$, that is, $w_j=\sum_{i=1}^n b_{ji}v_i$ for $j=1,\dots,k$. 
We consider the $k\times n$ matrix $B\coloneqq(b_{ji})$. 
For $J\in { [n] \choose k }$, we denote by $B_J$ the maximal minor of $B$ defined by choosing the columns indexed by $J$. 
The description of the matrix $B$ depends on a choice of a basis of $W$, but it is determined up to the multiplication of an invertible matrix. 
Thus, for $J \in { [n] \choose k }$, $B_J$ is determined up to the determinant of an invertible matrix. 
Then we can define the map sending $W$ to $(B_J)\in\PP^{{ [n] \choose k }-1}$. 
It is known that this induces the embedding $\XX_{k,n}\hookrightarrow \PP^{{ [n] \choose k }-1}$ called the \emph{Pl\"{u}cker embedding}. 
In particular, $\XX_{k,n}$ is a projective variety. 
By this description of $\XX_{k,n}$, the homogeneous coordinate ring of $\XX_{k,n}$ is given as follows. 
Let $L$ be a $k\times n$ matrix of indeterminates. 
For $J\in { [n] \choose k }$, we denote by $L_J$ the maximal minor of $L$ obtained by choosing the columns indexed by $J$. 
Then the homogeneous coordinate ring $A_{k,n}\coloneqq\CC[\XX_{k,n}]$ is generated by $\{L_J \mid J\in { [n] \choose k } \}$. 
Considering the morphism $\varphi_{k,n}$ from a polynomial ring $\CC[p_J \mid J\in { [n] \choose k } ]$ to $A_{k,n}$ 
such that $\varphi_{k,n}(p_J)=L_J$, we see that $A_{k,n}\cong \CC[p_J \mid J\in { [n] \choose k } ]\big/I_{k,n}$, 
where $I_{k,n}\coloneqq\Ker (\varphi_{k,n})$. 
Here, $I_{k,n}$ is called the \emph{Pl\"{u}cker ideal} and the relations among $p_J$ determined by 
generators of $I_{k,n}$ are called \emph{Pl\"{u}cker relations}. 
We denote the coset of a variable $p_J$ in $A_{k,n}$ by $\overline{p}_J$. 
We call $\overline{p}_J$ (or $L_J$) a \emph{Pl\"{u}cker coordinate}. 
It is known that $A_{k,n}$ has a cluster structure \cite{Sco} as we will see later. 
Such a structure can be obtained from a plabic graph which was developed by Postnikov \cite{Pos} for studying the totally positive Grassmannian.

\subsection{Plabic graphs}
\label{subsec_prelim_plabic}

We then introduce a \emph{plabic graph}. 
There are several combinatorial objects equivalent to this graph, e.g., a \emph{Postnikov diagram}, a \emph{dimer model on the disk}, 
and an \emph{alternating strand diagram} (see \cite{BKM,Pos,Sco}). 

\begin{definition}
\label{def_plabic}
A \emph{plabic graph} is a planar bicolored graph $G$ embedded in a disk up to homotopy equivalence, 
which has $n$ \emph{boundary vertices} on the boundary of the disk that are not colored and numbered $1, \dots, n$ in a clockwise order. 
In addition, $G$ has \emph{internal vertices} located in strictly inside the disk, and colored black or white. 
We suppose that each boundary vertex is adjacent to a single internal vertex, 
and assume that the subgraph obtained by removing boundary vertices is bipartite. 
The \emph{valency} (or \emph{degree}) of a vertex of $G$ is the number of edges that are incident to that vertex. 
We call a vertex whose valency is $d$ a $d$-valent vertex. 
\end{definition}

In the following, we assume that plabic graphs are connected, and that any leaf (i.e., a $1$-valent vertex) of a plabic graph is a boundary vertex. 
For example, Figure~\ref{ex_plabic1} is a plabic graph. 

\begin{figure}[H]
\begin{center}
\scalebox{0.8}{
\begin{tikzpicture}
\newcommand{\boundaryrad}{2cm} 
\newcommand{\noderad}{0.13cm} 
\newcommand{\nodewidth}{0.035cm} 
\newcommand{\edgewidth}{0.035cm} 
\newcommand{\boundarylabel}{8pt} 

\draw (0,0) circle(\boundaryrad) [gray, line width=\edgewidth];

\foreach \n/\a in {1/60, 2/0, 3/300, 4/240, 5/180, 6/120} {
\coordinate (B\n) at (\a:\boundaryrad); 
\coordinate (B\n+) at (\a:\boundaryrad+\boundarylabel); };
\foreach \n in {1,...,6} { \draw (B\n+) node {\small $\n$}; }
\foreach \n/\a/\r in {7/60/0.65, 8/0/0.65, 9/310/0.65, 10/270/0.5, 11/240/0.7, 12/200/0.4, 13/170/0.65, 14/120/0.65} {
\coordinate (B\n) at (\a:\r*\boundaryrad); }; 
\foreach \s/\t in {7/8, 8/9, 9/10, 10/11, 11/12, 12/13, 13/14, 14/7, 7/10, 7/12, 
1/7, 2/8, 3/9, 4/11, 5/13, 6/14} {
\draw[line width=\edgewidth] (B\s)--(B\t); 
};

\foreach \x in {7,9,11,13} {
\filldraw [fill=white, line width=\edgewidth] (B\x) circle [radius=\noderad] ;}; 
\foreach \x in {8,10,12,14} {
\filldraw [fill=black, line width=\edgewidth] (B\x) circle [radius=\noderad] ; };

\end{tikzpicture}
}
\end{center}
\caption{}
\label{ex_plabic1}
\end{figure}

A face of a plabic graph is called \emph{internal}, if it does not intersect with the boundary of the disk. 
Other faces are called \emph{boundary} faces. 

\medskip

\begin{definition}
\label{def_moves_plabic}
We  introduce the fundamental operations on a plabic graph as follows (see also Figure~\ref{fig_move_operation}):  
\begin{itemize}
\setlength{\parskip}{0pt} 
\setlength{\itemsep}{5pt}
\setlength{\leftskip}{0.3cm}
\item[(M1)] 
If a plabic graph contains a square consisting of four internal vertices with alternating colors, 
each of which is $3$-valent, then we switch the colors of these vertices 
(i.e., black vertices become white and vice versa). Then we insert some $2$-valent vertices to keep the bipartiteness. 
This operation is called the \emph{square move} (or \emph{urban renewal}). 
\item[(M2)]
If a plabic graph contains an internal $2$-valent vertex that is not adjacent to the boundary, 
then we can remove it and merge two adjacent vertices. Note that this operation can be reversed. 
\item[(M3)]
We may remove or add vertices of valency $2$ so that the resulting graph to be bipartite. 
\item[(R)] 
If a plabic graph contains two $3$-valent internal vertices of different colors connected by a pair of parallel edges, 
then we remove those vertices and edges and glue the remaining pair of edges together. 
\end{itemize}

These move operations make a given plabic graph another one. 
We say that two plabic graphs are called \emph{move-equivalent} if they are transformed into each other by applying the moves (M1)--(M3). 
We say that a plabic graph is \emph{reduced} if there is no graph in its move-equivalence class to which (R) can be applied. 

\begin{figure}[H]
\begin{center}
\scalebox{0.9}{
\begin{tikzpicture}
\newcommand{\noderad}{0.1cm} 
\newcommand{\nodewidth}{0.03cm} 
\newcommand{\edgewidth}{0.03cm} 

\node at (-1.2,0) {(M1) : }; 

\node at (0.8,0) {
\begin{tikzpicture}
\coordinate (A) at (0,0); \coordinate (B) at (1,0); 
\coordinate (C) at (1,1); \coordinate (D) at (0,1); 

\path (A) ++(225:0.9) coordinate (A+); \path (B) ++(315:0.9) coordinate (B+); 
\path (C) ++(45:0.9) coordinate (C+); \path (D) ++(135:0.9) coordinate (D+); 

\foreach \s/\t in {A/B,B/C,C/D,D/A,A/A+,B/B+,C/C+,D/D+} {
\draw[line width=\edgewidth] (\s)--(\t); }

\foreach \n in {A,C} {
\filldraw [fill=black, line width=\nodewidth] (\n) circle [radius=\noderad] ;}
\foreach \n in {B,D} {
\filldraw [fill=white, line width=\nodewidth] (\n) circle [radius=\noderad] ;} 
\end{tikzpicture}
}; 

\node at (2.8,0) {\Large$\longleftrightarrow$}; 

\node at (4.8,0) {
\begin{tikzpicture}
\path (A) ++(225:0.5) coordinate (A1); \path (B) ++(315:0.5) coordinate (B1); 
\path (C) ++(45:0.5) coordinate (C1); \path (D) ++(135:0.5) coordinate (D1); 

\foreach \s/\t in {A/B,B/C,C/D,D/A,A/A+,B/B+,C/C+,D/D+} {
\draw[line width=\edgewidth] (\s)--(\t); }

\foreach \n in {A,C,B1,D1} {
\filldraw [fill=white, line width=\nodewidth] (\n) circle [radius=\noderad] ;} 
\foreach \n in {B,D,A1,C1} {
\filldraw [fill=black, line width=\nodewidth] (\n) circle [radius=\noderad] ;}
\end{tikzpicture}
};

\node at (-5,-2.2) {(M2) : }; 
\node at (-2.5,-2.2) {
\begin{tikzpicture}
\coordinate (A) at (0,0); \coordinate (B) at (2,0); 
\coordinate (C) at (1,0); 

\path (A) ++(135:0.7) coordinate (A1); \path (A) ++(180:0.7) coordinate (A2); \path (A) ++(225:0.7) coordinate (A3); 
\path (B) ++(315:0.7) coordinate (B1); \path (B) ++(0:0.7) coordinate (B2); \path (B) ++(45:0.7) coordinate (B3); 

\draw[line width=\edgewidth] (A)--(B); 
\draw[line width=\edgewidth] (A)--(A1); \draw[line width=\edgewidth] (A)--(A2); \draw[line width=\edgewidth] (A)--(A3); 
\draw[line width=\edgewidth] (B)--(B1); \draw[line width=\edgewidth] (B)--(B2); \draw[line width=\edgewidth] (B)--(B3); 

\filldraw [fill=black, line width=\nodewidth] (A) circle [radius=\noderad] ;
\filldraw [fill=black, line width=\nodewidth] (B) circle [radius=\noderad] ;
\filldraw [fill=white, line width=\nodewidth] (C) circle [radius=\noderad] ;
\end{tikzpicture}
}; 

\node at (0,-2.2) {\Large$\longleftrightarrow$}; 

\node at (1.5,-2.2) {
\begin{tikzpicture}
\coordinate (A) at (0,0); \coordinate (B) at (0,0); 

\path (A) ++(135:0.7) coordinate (A1); \path (A) ++(180:0.7) coordinate (A2); \path (A) ++(225:0.7) coordinate (A3); 
\path (B) ++(315:0.7) coordinate (B1); \path (B) ++(0:0.7) coordinate (B2); \path (B) ++(45:0.7) coordinate (B3); 

\draw[line width=\edgewidth] (A)--(B); 
\draw[line width=\edgewidth] (A)--(A1); \draw[line width=\edgewidth] (A)--(A2); \draw[line width=\edgewidth] (A)--(A3); 
\draw[line width=\edgewidth] (B)--(B1); \draw[line width=\edgewidth] (B)--(B2); \draw[line width=\edgewidth] (B)--(B3); 

\filldraw [fill=black, line width=\nodewidth] (A) circle [radius=\noderad] ;
\filldraw [fill=black, line width=\nodewidth] (B) circle [radius=\noderad] ;
\end{tikzpicture}
}; 

\node at (4.2,-2.2) {(M3) : }; 

\node at (6,-2.2) {
\begin{tikzpicture}
\coordinate (O) at (1,0); 
\coordinate (A) at (0,0); \coordinate (B) at (2,0); 
\draw[line width=\edgewidth] (A)--(B); 
\filldraw [fill=white, line width=\nodewidth] (O) circle [radius=\noderad] ;
\end{tikzpicture}
}; 

\node at (7.8,-2.2) {\Large$\longleftrightarrow$}; 

\node at (9.6,-2.2) {
\begin{tikzpicture}
\draw[line width=\edgewidth] (A)--(B); 
\end{tikzpicture}
}; 

\node at (-1.4,-3.8) {(R) : }; 
\node at (0.6,-3.8) {
\begin{tikzpicture}
\coordinate (A) at (0,0); \coordinate (B) at (1,0); 

\path (A) ++(180:0.7) coordinate (A2); \path (B) ++(0:0.7) coordinate (B2); 

\draw[line width=\edgewidth] (A) to [bend left] (B); \draw[line width=\edgewidth] (A) to [bend right] (B); 
\draw[line width=\edgewidth] (A)--(A2); \draw[line width=\edgewidth] (B)--(B2); 

\filldraw [fill=black, line width=\nodewidth] (A) circle [radius=\noderad] ;
\filldraw [fill=white, line width=\nodewidth] (B) circle [radius=\noderad] ;
\end{tikzpicture}
}; 

\node at (2.8,-3.8) {\Large$\longleftrightarrow$}; 

\node at (5,-3.8) {
\begin{tikzpicture}
\draw[line width=\edgewidth] (A2)--(B2); 
\end{tikzpicture}
}; 
\end{tikzpicture}}
\end{center}
\caption{The list of operations on plabic graphs}
\label{fig_move_operation}
\end{figure}
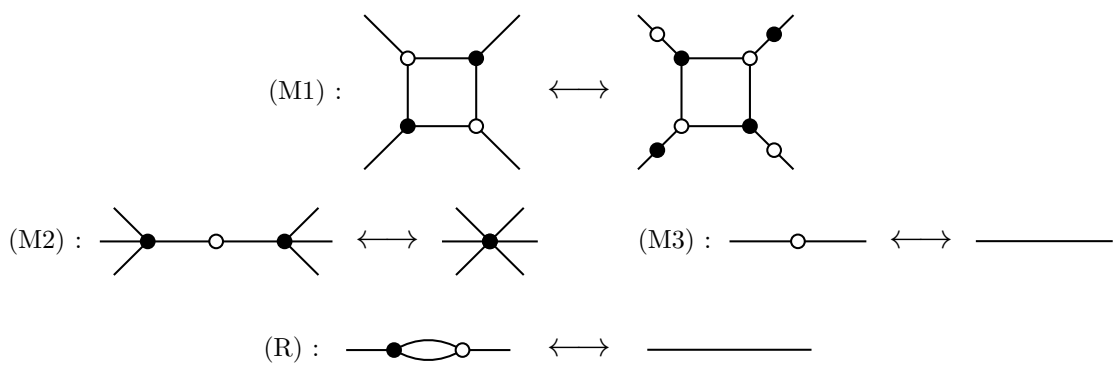
\end{definition}

Note that we often use the operation (M2) for making a square face of a plabic graph into a square face whose four vertices are all $3$-valent, 
see Figure~\ref{fig_square_move}. 
Thus, the square move (M1) can be applied to any square face after this modification. 
If the resulting bipartite graph contain $2$-valent vertices, then we can contract those using (M2) or (M3).
Note that (M3) can be used for a $2$-valent vertex that is adjacent to the boundary. 
In the following, we simply call this series of operations a square move even if we use (M2) and (M3) supplementally. 

\begin{figure}[H]
\begin{center}
\scalebox{0.85}{
\begin{tikzpicture}
\newcommand{\noderad}{0.1cm} 
\newcommand{\nodewidth}{0.03cm} 
\newcommand{\edgewidth}{0.03cm} 

\node at (0.3,0) {
\begin{tikzpicture}
\coordinate (A) at (0,0); \coordinate (B) at (1,0); 
\coordinate (C) at (1,1); \coordinate (D) at (0,1); 

\path (A) ++(180:0.7) coordinate (A1); \path (A) ++(-90:0.7) coordinate (A2); 
\path (B) ++(0:0.7) coordinate (B1); \path (B) ++(-90:0.7) coordinate (B2); 
\path (C) ++(0:0.7) coordinate (C1); \path (C) ++(90:0.7) coordinate (C2); 
\path (D) ++(180:0.7) coordinate (D1); \path (D) ++(90:0.7) coordinate (D2); 

\foreach \s/\t in {A/B,B/C,C/D,D/A,A/A1,A/A2,B/B1,B/B2,C/C1,C/C2,D/D1,D/D2} {
\draw[line width=\edgewidth] (\s)--(\t); }

\foreach \n in {A,C} {
\filldraw [fill=black, line width=\nodewidth] (\n) circle [radius=\noderad] ;}
\foreach \n in {B,D} {
\filldraw [fill=white, line width=\nodewidth] (\n) circle [radius=\noderad] ;} 
\end{tikzpicture}
}; 

\draw[line width=\edgewidth, <->] (1.8,0)--(3.2,0) node[midway,xshift=0cm,yshift=0.35cm] {\small (M2)} ;

\node at (5,0) {
\begin{tikzpicture}
\path (A) ++(225:0.5) coordinate (Aa); \path (A) ++(225:1) coordinate (Ab); 
\path (B) ++(315:0.5) coordinate (Ba); \path (B) ++(315:1) coordinate (Bb); 
\path (C) ++(45:0.5) coordinate (Ca); \path (C) ++(45:1) coordinate (Cb); 
\path (D) ++(135:0.5) coordinate (Da); \path (D) ++(135:1) coordinate (Db); 

\path (Ab) ++(180:0.4) coordinate (A1); \path (Ab) ++(-90:0.4) coordinate (A2); 
\path (Bb) ++(0:0.4) coordinate (B1); \path (Bb) ++(-90:0.4) coordinate (B2); 
\path (Cb) ++(0:0.4) coordinate (C1); \path (Cb) ++(90:0.4) coordinate (C2); 
\path (Db) ++(180:0.4) coordinate (D1); \path (Db) ++(90:0.4) coordinate (D2); 

\foreach \s/\t in {A/B,B/C,C/D,D/A,A/Aa,Aa/Ab,Ab/A1,Ab/A2,B/Ba,Ba/Bb,Bb/B1,Bb/B2, 
C/Ca,Ca/Cb,Cb/C1,Cb/C2,D/Da,Da/Db,Db/D1,Db/D2}{
\draw[line width=\edgewidth] (\s)--(\t); }

\foreach \n in {A,C,Ab,Ba,Cb,Da} {
\filldraw [fill=black, line width=\nodewidth] (\n) circle [radius=\noderad] ;}
\foreach \n in {B,D,Aa,Bb,Ca,Db} {
\filldraw [fill=white, line width=\nodewidth] (\n) circle [radius=\noderad] ;} 
\end{tikzpicture}
}; 

\draw[line width=\edgewidth, <->] (6.8,0)--(8.2,0) node[midway,xshift=0cm,yshift=0.35cm] {\small (M1)};  

\node at (10,0) {
\begin{tikzpicture}

\path (A) ++(225:0.66) coordinate (Aa); \path (A) ++(225:0.33) coordinate (Ac); 
\path (B) ++(315:0.66) coordinate (Ba); \path (B) ++(315:0.33) coordinate (Bc); 
\path (C) ++(45:0.66) coordinate (Ca); \path (C) ++(45:0.33) coordinate (Cc); 
\path (D) ++(135:0.66) coordinate (Da); \path (D) ++(135:0.33) coordinate (Dc); 

\foreach \s/\t in {A/B,B/C,C/D,D/A,A/Aa,Aa/Ab,Ab/A1,Ab/A2,B/Ba,Ba/Bb,Bb/B1,Bb/B2, 
C/Ca,Ca/Cb,Cb/C1,Cb/C2,D/Da,Da/Db,Db/D1,Db/D2}{
\draw[line width=\edgewidth] (\s)--(\t); }

\foreach \n in {B,D,Ab,Ac,Ba,Cb,Cc,Da} {
\filldraw [fill=black, line width=\nodewidth] (\n) circle [radius=\noderad] ;}
\foreach \n in {A,C,Aa,Bb,Bc,Ca,Db,Dc} {
\filldraw [fill=white, line width=\nodewidth] (\n) circle [radius=\noderad] ;} 
\end{tikzpicture}
}; 

\draw[line width=\edgewidth, <->] (11.8,0)--(13.2,0) node[midway,xshift=0cm,yshift=0.35cm] {\small (M2)}
node[midway,xshift=0.1cm,yshift=-0.35cm] {\small or (M3)};  

\node at (15,0) {
\begin{tikzpicture}

\foreach \s/\t in {A/B,B/C,C/D,D/A,A/Aa,Aa/Ab,Ab/A1,Ab/A2,B/Ba,Ba/Bb,Bb/B1,Bb/B2, 
C/Ca,Ca/Cb,Cb/C1,Cb/C2,D/Da,Da/Db,Db/D1,Db/D2}{
\draw[line width=\edgewidth] (\s)--(\t); }

\foreach \n in {B,D,Ab,Cb} {
\filldraw [fill=black, line width=\nodewidth] (\n) circle [radius=\noderad] ;}
\foreach \n in {A,C,Bb,Db} {
\filldraw [fill=white, line width=\nodewidth] (\n) circle [radius=\noderad] ;} 
\end{tikzpicture}
}; 

\end{tikzpicture}}
\end{center}
\caption{}
\label{fig_square_move}
\end{figure}

\begin{example}
\label{ex_plabic_mutation}
Let us consider the plabic graph given in Figure~\ref{ex_plabic1}, which coincides with the left of Figure~\ref{fig_plabic_mutation} below. 
Applying the square move to the face $\mu$, we have another plabic graph as in the right of Figure~\ref{fig_plabic_mutation}. 
Whereas, if we apply the square move to the face $\mu^\prime$, then we can recover the original graph. 
\begin{figure}[H]
\begin{center}
\scalebox{0.8}{
\begin{tikzpicture}
\newcommand{\boundaryrad}{2cm} 
\newcommand{\noderad}{0.13cm} 
\newcommand{\nodewidth}{0.035cm} 
\newcommand{\edgewidth}{0.035cm} 
\newcommand{\boundarylabel}{8pt} 

\node at (0,0){
\begin{tikzpicture}
\draw (0,0) circle(\boundaryrad) [gray, line width=\edgewidth];
\foreach \n/\a in {1/60, 2/0, 3/300, 4/240, 5/180, 6/120} {
\coordinate (B\n) at (\a:\boundaryrad); 
\coordinate (B\n+) at (\a:\boundaryrad+\boundarylabel); };
\foreach \n in {1,...,6} { \draw (B\n+) node {\small $\n$}; }
\foreach \n/\a/\r in {7/60/0.65, 8/0/0.65, 9/310/0.65, 10/270/0.5, 11/240/0.7, 12/200/0.4, 13/170/0.65, 14/120/0.65} {
\coordinate (B\n) at (\a:\r*\boundaryrad); }; 
\foreach \s/\t in {7/8, 8/9, 9/10, 10/11, 11/12, 12/13, 13/14, 14/7, 7/10, 7/12, 
1/7, 2/8, 3/9, 4/11, 5/13, 6/14} {
\draw[line width=\edgewidth] (B\s)--(B\t); };
\foreach \x in {7,9,11,13} {
\filldraw [fill=white, line width=\edgewidth] (B\x) circle [radius=\noderad] ;}; 
\foreach \x in {8,10,12,14} {
\filldraw [fill=black, line width=\edgewidth] (B\x) circle [radius=\noderad] ; };
\node[blue] (V36) at (240:0.2*\boundaryrad) {\small $\mu$} ; 
\end{tikzpicture}};

\draw[<->, line width=0.03cm] (3,0)--(4.5,0) ; 

\node at (7.5,0){
\begin{tikzpicture}
\draw (0,0) circle(\boundaryrad) [gray, line width=\edgewidth];
\foreach \n/\a in {1/60, 2/0, 3/300, 4/240, 5/180, 6/120} {
\coordinate (B\n) at (\a:\boundaryrad); 
\coordinate (B\n+) at (\a:\boundaryrad+\boundarylabel); };
\foreach \n in {1,...,6} { \draw (B\n+) node {\small $\n$}; }
\foreach \n/\a/\r in {7/60/0.65, 8/0/0.65, 9/310/0.65, 10/240/0.65, 11/170/0.65, 12/120/0.65, 13/0/0} {
\coordinate (B\n) at (\a:\r*\boundaryrad); }; 
\foreach \s/\t in {7/8, 8/9, 9/10, 10/11, 11/12, 12/7, 7/13, 9/13, 11/13, 
1/7, 2/8, 3/9, 4/10, 5/11, 6/12} {
\draw[line width=\edgewidth] (B\s)--(B\t); };
\foreach \x in {7,9,11} {
\filldraw [fill=white, line width=\edgewidth] (B\x) circle [radius=\noderad] ;}; 
\foreach \x in {8,10,12,13} {
\filldraw [fill=black, line width=\edgewidth] (B\x) circle [radius=\noderad] ; };
\node[blue] (V36) at (240:0.3*\boundaryrad) {\small $\mu^\prime$} ; 
\end{tikzpicture}};

\end{tikzpicture}
}
\end{center}
\caption{Applying the square move to the face marked by $\mu$ and $\mu^\prime$}
\label{fig_plabic_mutation}
\end{figure}
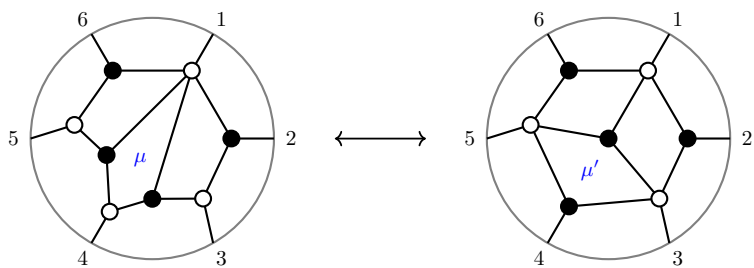
\end{example}

\medskip

In what follows, we assume that a plabic graph is reduced unless otherwise noted. 

\begin{definition}
Let $G$ be a plabic graph with boundary vertices $1, \dots, n$ labeled in a clockwise order. 
We define the \emph{trip permutation} $\pi_G$ as follows. 
First, we start at a boundary vertex $i$ and take a path along the edges of $G$ by turning maximally right 
at an internal black vertex and maximally left at an internal white vertex. 
We end this path if we arrive at a boundary vertex which we denote by $\pi(i)$. 
We call this path the \emph{trip} (or \emph{strand}) from $i$ and denote by $T_i$. 
We then define 
\[
\pi_G\coloneqq
{\small\begin{pmatrix}
1&2&\cdots&n \\
\pi(1)&\pi(2)&\cdots&\pi(n)
\end{pmatrix}}\in \fkS_n, 
\]
which is a permutation of $[n]$. 
\end{definition}

In the following, we consider the permutation 
\[
\pi_{k,n}\coloneqq
{\small\begin{pmatrix}
1&\cdots&n-k&n-k+1&\cdots&n \\
k+1&\cdots&n&1&\cdots&k
\end{pmatrix}}\in \fkS_n,
\]
\noindent
which is called the \emph{Grassmannian permutation}. 
We say that a plabic graph $G$ is type $\pi_{k,n}$ if $\pi_G=\pi_{k,n}$. 
The following is fundamental. 

\begin{proposition}[{\cite[Theorem~13.4]{Pos}}]
\label{prop_move_equiv}
Any two plabic graphs of type $\pi_{k,n}$ are move-equivalent. 
\end{proposition}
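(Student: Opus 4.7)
The plan is to establish the statement in two steps: (i) verify that the trip permutation $\pi_G$ is invariant under each of the moves (M1), (M2), (M3), and (ii) show that any two reduced plabic graphs sharing the same trip permutation can be connected by a sequence of such moves. Step (i) gives a well-defined map from move-equivalence classes of reduced plabic graphs to permutations, while Step (ii) shows this map is injective; since $G_{k,n}^{\rm rec}$ already realizes $\pi_{k,n}$, everything of type $\pi_{k,n}$ then lies in a single move-equivalence class.

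For (i), since each move modifies the graph only within a bounded region, one performs a direct case analysis, tracking each strand that enters the affected patch and checking that its exit point on the boundary of the patch is unchanged after the move. For the square move (M1), one verifies that the two pairs of strands threading the square swap their intersection pattern in a way that preserves where each global trip terminates. The verifications for (M2) and (M3) are essentially immediate, since these moves do not reroute any strand.

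For (ii), the plan is to pass to the language of alternating strand diagrams: each reduced plabic graph $G$ determines a system of oriented strands on the disk, one for each trip, whose boundary endpoints are recorded by $\pi_G$. Under this correspondence, the moves (M1)--(M3) translate to local modifications of strand diagrams, while the reducedness of $G$ corresponds to the absence of certain forbidden configurations, namely self-intersecting strands and bigons bounded by a pair of strands. One then shows that any two such reduced strand diagrams with the same boundary permutation can be transformed into one another by these local moves, proceeding by induction on a suitable complexity, for instance the count of disagreements with a reference diagram such as the one coming from $G_{k,n}^{\rm rec}$.

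The main obstacle lies in the inductive step in (ii): given two reduced strand diagrams with the same permutation but differing in some crossing pattern, one must exhibit a square move that strictly reduces the discrepancy. This requires a topological argument locating a \emph{minimal} disagreement --- typically an innermost bigon or triangle cut out by strands --- and showing that such a configuration can always be eliminated or simplified by an appropriate square move followed by (M2) and (M3) adjustments to restore bipartiteness. This portion of the argument is the content of Postnikov's original analysis in \cite{Pos}, on which we rely.
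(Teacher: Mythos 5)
The paper offers no proof of this proposition: it is quoted verbatim as Theorem~13.4 of Postnikov \cite{Pos}, which is precisely the result (reduced plabic graphs are move-equivalent if and only if they have the same trip permutation) whose structure you outline. Your sketch is a correct account of how that theorem is established --- invariance of $\pi_G$ under (M1)--(M3), plus connectivity of reduced graphs with equal trip permutation via strand diagrams --- but since you likewise defer the essential inductive step to \cite{Pos}, your proposal is in substance the same citation-level argument as the paper's.
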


In the following, we assume that $G$ is a plabic graphs of type $\pi_{k,n}$. 
We here summarize basic properties of $G$, see \cite{Pos,RW,Sco} for more details. 
First, each trip $T_i$ on $G$ partitions the disk containing $G$ into two parts, 
that is, the part on the left of $T_i$, and the part on the right. 
We put a number $i$ on each face of $G$ which is to the left of $T_i$. 
After considering trips $T_1,\dots, T_n$, each face will contain a $k$ element subset of $[n]$. 
Thus, we can label each face of a plabic graph with a $k$-subset in ${ [n] \choose k } $. 
It is known that the number of faces of $G$ is equal to $s+1$ where $s\coloneqq k(n-k)$, 
and boundary faces are labeled by $k$-subsets of the form $\beta_i\coloneqq [i, i+k-1]$ for any $G$, 
where $i=1,\dots,n$ and each number reduced modulo $n$. 
We then index $k$-subsets of $[n]$ using Young diagrams as follows.

\begin{definition}
\label{def_Young}
Let $\calP_{k,n}$ be the set  of Young diagrams fitting inside a $k\times (n-k)$ rectangle.
We give a natural bijection between $\calP_{k,n}$ and ${ [n] \choose k }$ in the following way. 
For $\lambda\in\calP_{k,n}$, we arrange $\lambda$ so that its top-left corner coincides with the top-left corner of the $k\times (n-k)$ rectangle. 
Then we cut out the southeast border of $\lambda$ along a path 
from the northeast corner to the southwest corner of the rectangle, which consists of $k$ south steps and $n-k$ west steps. 
We label these $n$ steps by $\{1,\dots, n\}$ in order. 
Then we assign the numbers labeling the south steps to $\lambda$. 
This procedure gives a bijection between $\calP_{k,n}$ and ${ [n] \choose k }$, thus we will identify these two sets. 
Note that the $k$-subset $\{n-k+1,\dots, n\}$ corresponds to the empty Young diagram $\varnothing$. 
\end{definition}

Thus, we can label each face in a plabic graph with a Young diagram. 
For example, the faces of the plabic graph given in Figure~\ref{ex_plabic1} are labeled as Figure~\ref{ex_plabic3}. 
We denote by $\widetilde{\calP}_G$ the set of all Young diagrams labeling the faces of a plabic graph $G$, 
thus $|\widetilde{\calP}_G|=s+1$. 
Let $\calP_G\coloneqq\widetilde{\calP}_G{\setminus}\{\varnothing\}$. 

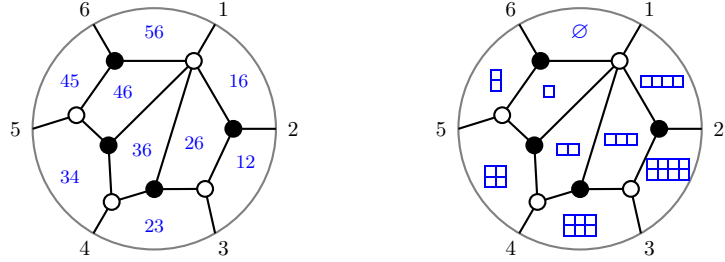
\begin{figure}[H]
\begin{center}
\scalebox{0.8}{
\begin{tikzpicture}[myarrow/.style={black, -latex}]
\newcommand{\boundaryrad}{2cm} 
\newcommand{\noderad}{0.13cm} 
\newcommand{\nodewidth}{0.035cm} 
\newcommand{\edgewidth}{0.035cm} 
\newcommand{\boundarylabel}{8pt} 
\newcommand{\arrowwidth}{0.05cm} 

\ytableausetup{boxsize=4.5pt}

\foreach \n/\a in {1/60, 2/0, 3/300, 4/240, 5/180, 6/120} {
\coordinate (B\n) at (\a:\boundaryrad); 
\coordinate (B\n+) at (\a:\boundaryrad+\boundarylabel); };

\node at (0,0) {
\begin{tikzpicture}
\draw (0,0) circle(\boundaryrad) [gray, line width=\edgewidth];
\foreach \n in {1,...,6} { \draw (B\n+) node {\small $\n$}; }
\foreach \n/\a/\r in {7/60/0.65, 8/0/0.65, 9/310/0.65, 10/270/0.5, 11/240/0.7, 12/200/0.4, 13/170/0.65, 14/120/0.65} {
\coordinate (B\n) at (\a:\r*\boundaryrad); }; 
\foreach \s/\t in {7/8, 8/9, 9/10, 10/11, 11/12, 12/13, 13/14, 14/7, 7/10, 7/12, 
1/7, 2/8, 3/9, 4/11, 5/13, 6/14} {
\draw[line width=\edgewidth] (B\s)--(B\t); 
};
\foreach \x in {7,9,11,13} {
\filldraw [fill=white, line width=\edgewidth] (B\x) circle [radius=\noderad] ;}; 
\foreach \x in {8,10,12,14} {
\filldraw [fill=black, line width=\edgewidth] (B\x) circle [radius=\noderad] ; };
\foreach \n/\a/\r in {12/340/0.8, 23/270/0.8, 34/210/0.8, 45/150/0.8, 56/90/0.8, 16/30/0.8} {
\node[blue] (V\n) at (\a:\r*\boundaryrad) {\footnotesize $\n$} ; }; 
\foreach \n/\a/\r in {26/-20/0.35, 36/240/0.2, 46/130/0.4} {
\node[blue] (V\n) at (\a:\r*\boundaryrad) {\footnotesize $\n$} ; }; 
\end{tikzpicture}};

\node at (7,0) {
\begin{tikzpicture}
\draw (0,0) circle(\boundaryrad) [gray, line width=\edgewidth];
\foreach \n in {1,...,6} { \draw (B\n+) node {\small $\n$}; }
\foreach \n/\a/\r in {7/60/0.65, 8/0/0.65, 9/310/0.65, 10/270/0.5, 11/240/0.7, 12/200/0.4, 13/170/0.65, 14/120/0.65} {
\coordinate (B\n) at (\a:\r*\boundaryrad); }; 
\foreach \s/\t in {7/8, 8/9, 9/10, 10/11, 11/12, 12/13, 13/14, 14/7, 7/10, 7/12, 
1/7, 2/8, 3/9, 4/11, 5/13, 6/14} {
\draw[line width=\edgewidth] (B\s)--(B\t); 
};
\foreach \x in {7,9,11,13} {
\filldraw [fill=white, line width=\edgewidth] (B\x) circle [radius=\noderad] ;}; 
\foreach \x in {8,10,12,14} {
\filldraw [fill=black, line width=\edgewidth] (B\x) circle [radius=\noderad] ; };
\foreach \n/\a/\r in {12/335/0.8, 23/270/0.8, 34/210/0.8, 45/150/0.8, 56/90/0.8, 16/30/0.78} {
\coordinate (V\n) at (\a:\r*\boundaryrad); }; 
\node[blue] at (V12) {\ydiagram{4,4}}; 
\node[blue] at (V23) {\ydiagram{3,3}}; 
\node[blue] at (V34) {\ydiagram{2,2}}; 
\node[blue] at (V45) {\ydiagram{1,1}}; 
\node[blue] at (V56) {$\varnothing$};
\node[blue] at (V16) {\ydiagram{4}};  
\foreach \n/\a/\r in {26/-15/0.35, 36/240/0.2, 46/130/0.4} {
\coordinate (V\n) at (\a:\r*\boundaryrad); }; 
\node[blue] at (V26) {\ydiagram{3}}; 
\node[blue] at (V36) {\ydiagram{2}}; 
\node[blue] at (V46) {\ydiagram{1}}; 
\end{tikzpicture}};

\end{tikzpicture}}
\end{center}
\caption{The plabic graph labeled by $2$-subsets (left) and Young diagrams (right)}
\label{ex_plabic3}
\end{figure}

\begin{definition}
\label{def_rectangle}
Let $G$ be a plabic graph of type $\pi_{k,n}$. 
We say that $G$ is \emph{rectangle} if each face of $G$ is labeled by a Young diagram whose shape is a rectangular grid pattern. 
We denote the rectangle plabic graph of type $\pi_{k,n}$ by $G_{k,n}^{\rec}$. 
\end{definition}

A rectangle plabic graph is also called \emph{regular}, and the method to construct it for fixed $k$ and $n$ is given in \cite[Section~8]{MaSc}. 
For example, the right of Figure~\ref{ex_plabic3} shows that Figure~\ref{ex_plabic1} is the rectangle plabic graph $G_{2,6}^{\rec}$. 
We remark that some papers, e.g., \cite{FF,RW}, use the ``opposite" convention, that is, 
the rectangle plabic graph of type $\pi_{k,n}$ is denoted by $G_{n-k,n}^{\rec}$ in those papers instead of $G_{k,n}^{\rec}$. 

\medskip

We then introduce the quiver, which is a finite directed graph, obtained as the dual of $G$. 

\begin{definition}
\label{def_quivar_G}
Let $G$ be a plabic graph, and assume that $G$ contains no internal $2$-valent vertex. 
(If $G$ contains such a vertex, then we remove it by the operation (M3).)  
We define the quiver $Q(G)$ associated to $G$ in the following way. 
We assign a vertex of $Q(G)$ dual to face in $G$, thus we identify the set of vertices of $Q(G)$ with $\widetilde{\calP}_G$. 
A vertex of $Q(G)$ is said to be \emph{frozen} if the corresponding face in $G$ is a boundary face, and is said to be \emph{mutable} otherwise. 
We draw an arrow dual to any edge $e$ in $G$ that separates two faces, and at least one of which corresponds to a mutable vertex. 
We determine the orientation of arrows so that the white vertex is on the right of the arrow when it crosses over $e$. 
\end{definition}

For example, the following is the quiver associated to the plabic graph given in Figure~\ref{ex_plabic1}. 

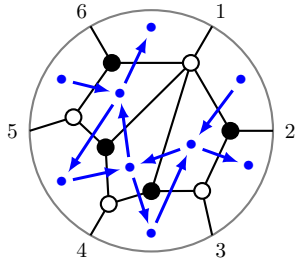
\begin{figure}[H]
\begin{center}
\scalebox{0.8}{
\begin{tikzpicture}[myarrow/.style={black, -latex}]
\newcommand{\boundaryrad}{2cm} 
\newcommand{\noderad}{0.13cm} 
\newcommand{\nodewidth}{0.035cm} 
\newcommand{\edgewidth}{0.035cm} 
\newcommand{\boundarylabel}{8pt} 
\newcommand{\arrowwidth}{0.05cm} 

\foreach \n/\a in {1/60, 2/0, 3/300, 4/240, 5/180, 6/120} {
\coordinate (B\n) at (\a:\boundaryrad); 
\coordinate (B\n+) at (\a:\boundaryrad+\boundarylabel); };
\draw (0,0) circle(\boundaryrad) [gray, line width=\edgewidth];
\foreach \n in {1,...,6} { \draw (B\n+) node {\small $\n$}; }
\foreach \n/\a/\r in {7/60/0.65, 8/0/0.65, 9/310/0.65, 10/270/0.5, 11/240/0.7, 12/200/0.4, 13/170/0.65, 14/120/0.65} {
\coordinate (B\n) at (\a:\r*\boundaryrad); }; 
\foreach \s/\t in {7/8, 8/9, 9/10, 10/11, 11/12, 12/13, 13/14, 14/7, 7/10, 7/12, 
1/7, 2/8, 3/9, 4/11, 5/13, 6/14} {
\draw[line width=\edgewidth] (B\s)--(B\t); 
};
\foreach \x in {7,9,11,13} {
\filldraw [fill=white, line width=\edgewidth] (B\x) circle [radius=\noderad] ;}; 
\foreach \x in {8,10,12,14} {
\filldraw [fill=black, line width=\edgewidth] (B\x) circle [radius=\noderad] ; };
\foreach \n/\a/\r in {12/340/0.85, 23/270/0.85, 34/210/0.85, 45/150/0.85, 56/90/0.85, 16/30/0.85} {
\node[blue] (V\n) at (\a:\r*\boundaryrad) {$\bullet$} ; }; 
\foreach \n/\a/\r in {26/-20/0.35, 36/240/0.35, 46/130/0.4} {
\node[blue] (V\n) at (\a:\r*\boundaryrad) {$\bullet$} ; }; 
\foreach \s/\t in {16/26, 26/12, 26/36, 23/26, 36/23, 36/46, 34/36, 46/34, 45/46, 46/56} {
\draw[shorten >=-0.07cm, shorten <=-0.05cm, myarrow, blue, line width=\arrowwidth] (V\s)--(V\t); };

\end{tikzpicture}}
\end{center}
\caption{The quiver associated to the plabic graph  $G_{2,6}^{\rec}$ given in Figure~\ref{ex_plabic1}}
\label{ex_plabic2}
\end{figure}

\begin{definition}[{Quiver mutation}]
\label{def_quiver_mutation}
Let $\lambda$ be a mutable vertex of a quiver $Q=Q(G)$. 
The \emph{quiver mutation} $\qmut_\lambda$ of $Q$ at $\lambda$ (or in direction $\lambda$) is the operation transforming $Q$ into 
another quiver which will be denoted by $\qmut_\lambda(Q)$ by performing the following steps:
\begin{itemize}
\item[(1)] For each oriented path $i \rightarrow \lambda\rightarrow j$ passing through $\lambda$, we add a new arrow $i\rightarrow  j$ 
unless $i$ and $j$ are both frozen. 
\item[(2)] Reverse the orientation of all arrows incident to $\lambda$. 
\item[(3)] Remove oriented $2$-cycles until we are unable to do this. 
\end{itemize}
Two quivers are called \emph{mutation-equivalent} if they are transformed into each other by the repetition of quiver mutations. 
\end{definition}

The quiver mutation at a certain vertex of $Q(G)$ can be understood in terms of operations in Definition~\ref{def_moves_plabic}. 
In particular, the next lemma follows from the definition (cf. \cite{Pos,Sco}). 

\begin{lemma}
\label{lem_mutation_corresp}
Let $G, G^\prime$ be plabic graphs. 
If $G, G^\prime$ are transformed into each other by the square move {\rm(M1)} 
at a square face $\lambda$, then the associated quivers $Q(G)$ and $Q(G^\prime)$ are transformed into each other by the mutation at 
the vertex corresponding to $\lambda$. 

On the other hand, if $\lambda$ is a square face of $G$, then 
the mutation of $Q(G)$ at a vertex corresponding to $\lambda$ is 
the quiver $Q(G^\prime)$ associated to $G^\prime$ that is obtained by applying the square move to $\lambda$. 
\end{lemma}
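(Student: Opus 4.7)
The proof is a local verification: the square move affects only the four vertices of the square face $\lambda$ and their incident edges, and quiver mutation at the corresponding vertex only alters arrows incident to $\lambda$ or between its neighbors. So the second statement will follow from the first (both square move and quiver mutation are involutive), and for the first we can restrict attention to a neighborhood of $\lambda$.

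The plan is to set up notation as follows. Let $\lambda$ be a square face of $G$ whose four boundary vertices are all $3$-valent with colors alternating; label the four edge-adjacent faces as $\mu_1,\mu_2,\mu_3,\mu_4$ (in cyclic order) and note that each vertex of the square carries exactly one external edge, which separates consecutive $\mu_i,\mu_{i+1}$. First I will read off the arrows of $Q(G)$ in this local picture using the ``white-on-right'' rule of Definition~\ref{def_quivar_G}: because the colors of the four square vertices alternate, the four arrows around $\lambda$ alternate direction, so two point into $\lambda$ (from $\mu_{i_1},\mu_{i_3}$, say) and two point out (to $\mu_{i_2},\mu_{i_4}$). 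The four external edges at the corners of the square produce four further arrows among the $\mu_i$'s, each oriented from a source-type neighbor into a sink-type neighbor of $\lambda$ (again dictated by the corner vertex colors). Altogether the local subquiver around $\lambda$ in $Q(G)$ is the standard ``mutation model'': a cycle of length four through $\lambda$ together with four chords from in-neighbors to out-neighbors of $\lambda$.

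Next I will apply the square move to $\lambda$ and read off the local subquiver of $Q(G')$ directly. By (M1) the four colors flip, so the four edges of the new square face $\lambda'$ have reversed orientation rules; the external edges at the corners also change colors of their endpoints, so the four ``corner'' arrows reverse. This matches exactly the effect of quiver mutation $\qmut_\lambda$: step~(2) reverses all arrows incident to $\lambda$, while step~(1) adds new arrows from in-neighbors of $\lambda$ to out-neighbors along paths $\mu \to \lambda \to \mu'$. The chords already present in $Q(G)$ from the corner edges are exactly the ones whose orientation gets flipped by the color change, and step~(3) cancels the resulting $2$-cycles between any such preexisting chord and the newly created arrow in the opposite direction. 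The supplementary moves (M2)/(M3) used to create or remove $2$-valent vertices when performing the square move do not alter the dual quiver, since $2$-valent vertices are collapsed before drawing $Q(G)$.

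The main obstacle is bookkeeping rather than a conceptual difficulty: one must check all cases for how many of the $\mu_i$ are frozen (since arrows between two frozen vertices are suppressed in both $Q(G)$ and in the mutation rule), and also verify that if two of the $\mu_i$'s coincide (because in a small plabic graph the same face may touch $\lambda$ along two distinct edges), then the $2$-cycle cancellation in step~(3) matches the combinatorics of the square move. In each case the local computation is direct. Once the first statement is established, the second follows because applying the square move at $\lambda$ in $G'$ recovers $G$ and applying $\qmut_\lambda$ to $Q(G')$ recovers $Q(G)$, so $Q(G')$ must equal $\qmut_\lambda(Q(G))$.
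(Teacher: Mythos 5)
Your overall plan --- a purely local check around $\lambda$, reading off arrows with the white-on-right rule, comparing with the three steps of $\qmut_\lambda$, treating frozen vertices and coincident faces as side cases, and deducing the second statement from involutivity --- is the right one. Note that the paper itself gives no written proof (it asserts the lemma ``follows from the definition'', citing Postnikov and Scott, with Figure~\ref{fig_quiver_mut} as illustration), so any complete argument must consist of exactly this kind of bookkeeping. The problem is that your bookkeeping at the four corners is incorrect, and that is where the entire content of the proof lies.

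Two concrete errors. (i) The chords dual to the corner edges point from the \emph{out}-neighbors of $\lambda$ to the \emph{in}-neighbors: each corner edge closes an oriented triangle $\lambda \to \mu \to \mu' \to \lambda$, as one checks with the white-on-right rule (these triangles are visible in the quiver of Figure~\ref{ex_plabic2}). You state the chords go ``from in-neighbors to out-neighbors'', which is the same direction as the arrows created in step (1) of the mutation; with that orientation step (3) would cancel nothing, arrows would double, and the claimed match would fail. (ii) After the square move the corner edges do not survive ``with reversed orientation'', as you claim. (M1) flips the colors of the square and inserts a $2$-valent vertex on each external edge; its two neighbors then have the same color, so the only admissible cleanup is (M2), which merges the corner with the far endpoint and \emph{contracts} the corner edge. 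Hence those four chords are simply absent from $Q(G')$ (equivalently, keeping the inserted vertex produces a $2$-cycle, which the quiver construction discards). Your assertion that the supplementary (M2)/(M3) moves ``do not alter the dual quiver'' is precisely where this is hidden: merging two vertices via (M2) changes which faces share an edge. With the corrected picture the lemma does come out: the composites of step (1) (in $\to$ out) cancel in step (3) against the preexisting chords (out $\to$ in), leaving no chords, in agreement with $Q(G')$; symmetrically, at a corner of $\lambda$ that is more than $3$-valent in $G$ (the situation before the preparatory (M2) of Figure~\ref{fig_square_move}) there is no chord to cancel, the composite survives, and it is dual to the new corner edge created in $G'$. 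As written, however, your two computations contradict each other: after your own step-(3) cancellation the mutated quiver has no chords, while your description of $Q(G')$ has reversed ones, so the ``exact match'' you assert does not hold under your own conventions.
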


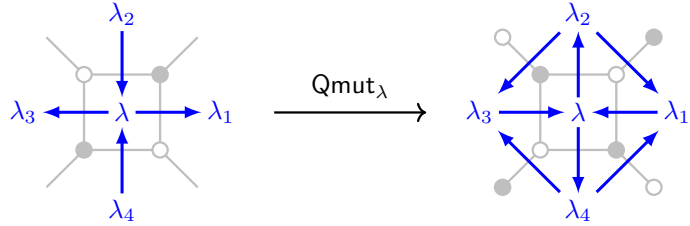
\begin{figure}[H]
\begin{center}
\scalebox{1}{
\begin{tikzpicture}[myarrow/.style={black, -latex}]
\newcommand{\noderad}{0.1cm} 
\newcommand{\nodewidth}{0.03cm} 
\newcommand{\edgewidth}{0.03cm} 
\newcommand{\arrowwidth}{0.04cm} 


\node at (0,0) {
\begin{tikzpicture}
\foreach \n/\a/\b in {1/0/0,2/1/0,3/1/1,4/0/1} {
\coordinate (V\n) at (\a,\b); }
\foreach \n/\a/\r/\m in {1/225/5,2/315/6,3/45/7,4/135/8}{
\path (V\n) ++(\a:0.7) coordinate (V\m); }
\foreach \s/\t in {1/2,2/3,3/4,4/1,1/5,2/6,3/7,4/8}{
\draw[line width=\edgewidth,lightgray] (V\s)--(V\t); }
\foreach \n in{1,3}{
\filldraw [lightgray, fill=lightgray, line width=\nodewidth] (V\n) circle [radius=\noderad] ;
}
\foreach \n in{2,4}{
\filldraw [lightgray, fill=white, line width=\nodewidth] (V\n) circle [radius=\noderad] ;
}
\node[blue] (Q0) at (0.5,0.5) {\small $\lambda$} ;
\node[blue] (Q1) at (1.8,0.5) {\small $\lambda_1$} ;\node[blue] (Q2) at (0.5,1.8) {\small $\lambda_2$} ;
\node[blue] (Q3) at (-0.8,0.5) {\small $\lambda_3$} ;\node[blue] (Q4) at (0.5,-0.8) {\small $\lambda_4$} ;
\foreach \s/\t in {0/1,0/3,2/0,4/0} {
\draw[shorten >=-0.06cm, shorten <=-0.06cm, myarrow, line width=\arrowwidth, blue] (Q\s)--(Q\t); 
};
\end{tikzpicture}};

\node at (6,0) {
\begin{tikzpicture}
\foreach \n/\a/\b in {1/0/0,2/1/0,3/1/1,4/0/1} {
\coordinate (V\n) at (\a,\b); }
\foreach \n/\a/\r/\m in {1/225/5,2/315/6,3/45/7,4/135/8}{
\path (V\n) ++(\a:0.7) coordinate (V\m); }
\foreach \s/\t in {1/2,2/3,3/4,4/1,1/5,2/6,3/7,4/8}{
\draw[line width=\edgewidth,lightgray] (V\s)--(V\t); }
\foreach \n in{2,4,5,7}{
\filldraw [lightgray, fill=lightgray, line width=\nodewidth] (V\n) circle [radius=\noderad] ;
}
\foreach \n in{1,3,6,8}{
\filldraw [lightgray, fill=white, line width=\nodewidth] (V\n) circle [radius=\noderad] ;
}
\node[blue] (Q0) at (0.5,0.5) {\small $\lambda$} ;
\node[blue] (Q1) at (1.8,0.5) {\small $\lambda_1$} ;\node[blue] (Q2) at (0.5,1.8) {\small $\lambda_2$} ;
\node[blue] (Q3) at (-0.8,0.5) {\small $\lambda_3$} ;\node[blue] (Q4) at (0.5,-0.8) {\small $\lambda_4$} ;

\foreach \s/\t in {1/0,3/0,0/2,0/4,2/1,2/3,4/1,4/3} {
\draw[shorten >=-0.06cm, shorten <=-0.06cm, myarrow, line width=\arrowwidth,blue] (Q\s)--(Q\t); }; 
\end{tikzpicture}};


\draw[line width=\edgewidth, ->] (2,0)--(4,0) node[midway,xshift=0cm,yshift=0.35cm] {\small $\qmut_\lambda$} ;

\end{tikzpicture}}
\end{center}
\caption{The quiver mutation at $\lambda$ corresponding to a square face}
\label{fig_quiver_mut}
\end{figure}

\begin{remark}
\label{rem_mutation_not_dual}
By Proposition~\ref{prop_move_equiv} and Lemma~\ref{lem_mutation_corresp}, 
all quivers associated to plabic graphs of type $\pi_{k,n}$ are mutation-equivalent. 
We note that even if a mutable vertex of $Q(G)$ does not correspond to a square face in $G$, 
in which case the number of incoming arrows  (equivalently outgoing arrows) at the vertex is
greater than two, we can apply the quiver mutation. 
However, the resulting quiver is no longer the dual of a plabic graph in this situation. 
\end{remark}

\medskip

We then mention the \emph{cluster structure} on the Grassmannian $\XX_{k,n}=\Gr(k,n)$. 
A \emph{cluster algebra} was introduced by Fomin and Zelevinsky  \cite{FZ}. 
To define a cluster algebra, we consider a certain free generating set of an ambient field, called \emph{$($extended$)$ cluster}, 
and certain combinatorial data (e.g.,  a quiver or a skew symmetric matrix) associated to a given cluster. 
A pair of a cluster and such a combinatorial data is called a \emph{labeled seed}. 
By recursively applying the operation called the \emph{seed mutation}, which makes a given labeled seed another one, 
we have a family of labeled seeds, and hence a family of clusters. 
A cluster algebra is a commutative algebra generated by such a family of clusters, 
and it gives algebraic and combinatorial frameworks to understand the dual canonical basis and total positivity in Lie theory. 
After this algebra was introduced, the relationships between cluster algebras and many fields of mathematics have been discovered. 
The homogeneous coordinate ring $A_{k,n}$ of $\XX_{k,n}$ is one of important examples of cluster algebras as shown in \cite{FZ,Sco}. 
Since the cluster structure endowed with the Grassmannian plays an important role in Section~\ref{sec_tropical_mutation}, 
we now give a brief explanation that $A_{k,n}$ can be obtained from a plabic graph $G$ of type $\pi_{k,n}$ as a cluster algebra. 
It follows from \cite[Theorem~3]{Sco} that a pair of the quiver $Q(G)$ associated to $G$ and 
indeterminate variables indexed by faces of $G$, called \emph{$\calA$-cluster variables}, forms a labeled seed $\Sigma_G$, 
that is, the set of such indeterminate variables is a cluster. 
Since faces of $G$ are divided into two parts (i.e., internal faces and boundary faces), 
a cluster is also divided into \emph{mutable} and \emph{frozen} (or \emph{coefficient}) variables 
that respectively correspond to internal and boundary faces of $G$. 
In this situation, the seed mutation can be defined by combining the quiver mutation $\qmut_\mu(Q(G))$ 
at a mutable vertex $\mu$ and the \emph{exchange relation} that replace a mutable variable corresponding to $\mu$ by another variable. 
Repeating the seed mutations to $\Sigma_G$, we obtain a lot of (usually infinitely many) clusters, 
but we note that frozen variables are preserved. 
Then, the cluster algebra (of geometric type) $\calA_{\Sigma_G}$ associated to $\Sigma_G$ is defined as 
the $R$-algebra generated by variables derived from mutable variables in $\Sigma_G$ by sequences of seed mutations, 
where $R$ is the $\CC$-algebra generated by frozen variables in $\Sigma_G$. 
By \cite[Theorem~3]{Sco}, there is an isomorphism $\calA_{\Sigma_G}\cong A_{k,n}$, 
thus we can consider the homogeneous coordinate ring $A_{k,n}$ as a cluster algebra. 
We note that, in some literatures, the frozen variables are inverted when we define the cluster algebra, 
in which case the labeled seed $\Sigma_G$ gives a cluster structure on a certain open positroid variety (see e.g., \cite{GL,MuSp}). 

\begin{remark}
Throughout this article, we consider the cluster structure arising from a plabic graph of type $\pi_{k,n}$. 
However, when $k=1$ or $n-1$, in which case $\XX_{k,n}\cong\PP^{n-1}$, a plabic graph has only boundary faces, 
and hence the associated quiver has only frozen vertices. 
Since we can not apply the mutations to such a plabic graph, we will exclude these cases. 
\end{remark}

\begin{remark}
A plabic graph has been studied from the viewpoint of representation theory of algebras (see e.g., \cite{BKM,JKS1,JKS2,Pre}). 
More precisely, we can obtain a quiver with a potential which consists of the quiver associated to a plabic graph and a certain linear combination of cycles of the quiver. 
(Note that in this case we also consider an arrow between adjacent frozen vertices.) 
The (\emph{frozen}) \emph{Jacobian algebra} defined from the quiver with a potential is important in this context. 
Also, the mutation introduced in Definition~\ref{def_quiver_mutation} (and hence the square move) can be generalized to the quiver with a potential and the Jacobian algebra as in \cite{Pre}. 
In addition, maximal Cohen-Macaulay modules over a certain algebra related to a plabic graph provides an additive categorification of the cluster structure of $\Gr(k,n)$ \cite{JKS1,JKS2}. 
In this context, the valuation associated to a plabic graph, which will be introduced in Subsection~\ref{subsec_prelim_NO}, is also categorified 
(see \cite[Remark~5.6]{JKS2} and \cite[Corollary~16.19]{RW}). 
\end{remark}

\subsection{Newton--Okounkov bodies associated to plabic graphs}
\label{subsec_prelim_NO}

In this subsection, we define Newton--Okounkov bodies using plabic graphs. 
We mainly refer to \cite{RW}. 

\begin{definition}
\label{def_PO}
We suppose that $\calO$ is a choice of orientation of each edge of a plabic graph $G$. 
An orientation $\calO$ is called a \emph{perfect orientation} 
if every internal white vertex has exactly one incoming arrow and every internal black vertex has exactly one outgoing arrow. 
We denote by $G_\calO$ a plabic graph oriented by $\calO$. 
Also, a perfect orientation $\calO$ is called \emph{acyclic} if $G_\calO$ is acyclic as a directed graph. 
We denote by $I_\calO$ the set of boundary vertices that are sources of $G_\calO$, 
and this set is called the \emph{source set}. 
\end{definition}

Then, we assign a variable $x_\lambda$ to each face $\lambda\in\widetilde{\calP}_G$. 
We call the set of such variables $\{x_\lambda \mid \lambda\in\widetilde{\calP}_G\}$ the \emph{$\calX$-cluster variables}. 

\begin{definition}
Let $J$ be a set of boundary vertices with $|J|=|I_\calO|$. 
A \emph{$J$-flow} $F$ from $I_\calO$ is a collection of 
self-avoiding, vertex disjoint directed paths in $G_\calO$
such that the sources of these paths are $I_\calO-(I_\calO\cap J)$ and the sinks are $J-(I_\calO\cap J)$. 
Each path $w$ in a $J$-flow divides the faces of $G$ into those which are on the left and those which are on the right of the path. 
We define the \emph{weight} $\wt(w)$ of each such path as the product 
of variable $x_\lambda$, where $\lambda$ ranges over all face labels to the left of the path. 
Furthermore,  we define the \emph{weight} $\wt(F)$ of a $J$-flow $F$ as the product of the weights of all paths in the $J$-flow. 
\end{definition}

It is known that any plabic graph of type $\pi_{k,n}$ has an acyclic perfect orientation $\calO$ and the order of $I_\calO$ is $k$ 
(see \cite[the discussion after Remark~11.6]{Pos}). 
Furthermore, there exists a unique acyclic perfect orientation $\calO$ such that $I_\calO=[k]=\{1,\dots, k\}$ \cite[Lemma~6.3 and Remark~6.4]{RW}. 
Up to rotations, such a unique acyclic perfect orientation takes the form as in Figure~\ref{fig_acyclic_PO} around any square face in $G$. 

\begin{figure}[H]
\begin{center}
\scalebox{1}{
\begin{tikzpicture}
\newcommand{\noderad}{0.1cm} 
\newcommand{\nodewidth}{0.03cm} 
\newcommand{\edgewidth}{0.03cm} 

\usetikzlibrary{decorations.markings}
\tikzset{sarrow/.style={decoration={markings,
                               mark=at position .65 with {\arrow[scale=1.3, red]{latex}}},
                               postaction={decorate}}}
                  
 \tikzset{ssarrow/.style={decoration={markings,
                               mark=at position .8 with {\arrow[scale=1.3, red]{latex}}},
                               postaction={decorate}}}

\foreach \n/\a/\b in {A/0/0, B/1/0, C/1/1, D/0/1} {
\coordinate (\n) at (\a,\b); } 
\foreach \n/\a in {A/225, B/315, C/45, D/135} {
\path (\n) ++(\a:0.7) coordinate (\n+); }

\draw[sarrow, line width=\edgewidth] (B)--(A); \draw[sarrow, line width=\edgewidth] (C)--(B); 
\draw[sarrow, line width=\edgewidth] (D)--(C); \draw[sarrow, line width=\edgewidth] (D)--(A); 

\draw[ssarrow, line width=\edgewidth] (A)--(A+); \draw[ssarrow, line width=\edgewidth] (B)--(B+); 
\draw[sarrow, line width=\edgewidth] (C+)--(C); \draw[sarrow, line width=\edgewidth] (D+)--(D); 

\filldraw [fill=black, line width=\nodewidth] (A) circle [radius=\noderad] ;
\filldraw [fill=white, line width=\nodewidth] (B) circle [radius=\noderad] ; 
\filldraw [fill=black, line width=\nodewidth] (C) circle [radius=\noderad] ;
\filldraw [fill=white, line width=\nodewidth] (D) circle [radius=\noderad] ; 
\end{tikzpicture}}
\end{center}
\caption{The acyclic perfect orientation around a square face}
\label{fig_acyclic_PO}
\end{figure}
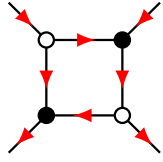

From now on, we will always choose the perfect orientation $\calO$ whose source set is $I_\calO=[k]$ 
for a plabic graph $G$ of type $\pi_{k,n}$. 
For a given $k$-subset $J\in{ [n] \choose k }$, we define the \emph{flow polynomial} 
\begin{equation}
\label{eq_flow_poly}
P^G_J\coloneqq \sum_F\wt(F), \quad \text{where $F$ ranges over all $J$-flows from $I_\calO$}.
\end{equation}
In this case, $x_\varnothing$ does not appear in $P^G_J$ for any $J$, 
because the face labeled by $\varnothing$ locates at the right of any path from $I_\calO$ to $J$. 

\begin{example}
\label{ex_Jflow}
Let us consider the rectangle plabic graph $G=G_{2,6}^{\rec}$ (see Figure~\ref{ex_plabic3}). 
The left of Figure~\ref{fig_Jflow} is the acyclic perfect orientation $\calO$ with the source set $I_\calO=\{1,2\}$. 
The middle and right of Figure~\ref{fig_Jflow} are the $J$-flows $J=\{3,5\}$. 
Thus, we have 
\[
\ytableausetup{boxsize=3pt}
P^G_{\{3,5\}}=x_{\ydiagram{4,4}}^2 x_{\ydiagram{3,3}}x_{\ydiagram{2,2}}x_{\ydiagram{4}}x_{\ydiagram{3}}
+x_{\ydiagram{4,4}}^2 x_{\ydiagram{3,3}}x_{\ydiagram{2,2}}x_{\ydiagram{4}}x_{\ydiagram{3}}x_{\ydiagram{2}}.
\]
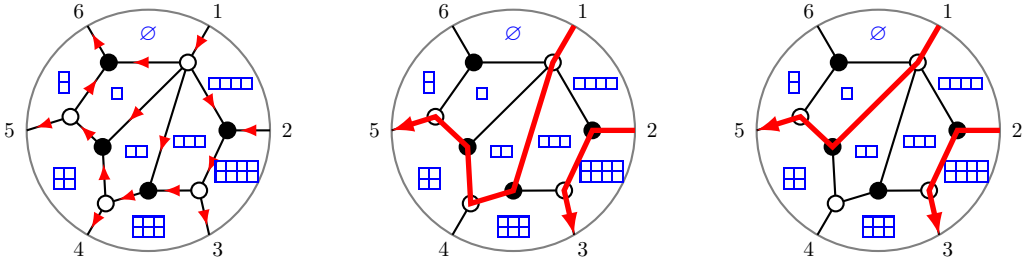
\begin{figure}[H]
\begin{center}
\scalebox{0.8}{
\begin{tikzpicture}[myarrow/.style={black, -latex}]
\newcommand{\boundaryrad}{2cm} 
\newcommand{\noderad}{0.13cm} 
\newcommand{\nodewidth}{0.035cm} 
\newcommand{\edgewidth}{0.035cm} 
\newcommand{\boundarylabel}{8pt} 
\newcommand{\arrowwidth}{0.05cm} 

\ytableausetup{boxsize=4.5pt}
\usetikzlibrary{decorations.markings}
\tikzset{sarrow/.style={decoration={markings, 
                               mark=at position .7 with {\arrow[scale=1.3, red]{latex}}},
                               postaction={decorate}}}                               
\tikzset{ssarrow/.style={decoration={markings, 
                               mark=at position .8 with {\arrow[scale=1.3, red]{latex}}},
                               postaction={decorate}}}
                               
\foreach \n/\a in {1/60, 2/0, 3/300, 4/240, 5/180, 6/120} {
\coordinate (B\n) at (\a:\boundaryrad); 
\coordinate (B\n+) at (\a:\boundaryrad+\boundarylabel); };

\node at (0,0) {
\begin{tikzpicture}
\draw (0,0) circle(\boundaryrad) [gray, line width=\edgewidth];
\foreach \n in {1,...,6} { \draw (B\n+) node {\small $\n$}; }
\foreach \n/\a/\r in {7/60/0.65, 8/0/0.65, 9/310/0.65, 10/270/0.5, 11/240/0.7, 12/200/0.4, 13/170/0.65, 14/120/0.65} {
\coordinate (B\n) at (\a:\r*\boundaryrad); }; 

\foreach \s/\t in {1/7, 2/8, 7/8, 8/9, 9/10, 10/11, 11/12, 12/13, 13/14, 7/10, 7/12, 7/14} {
\draw[sarrow, line width=\edgewidth] (B\s)--(B\t); 
};
\foreach \s/\t in {9/3, 11/4, 13/5, 14/6} {
\draw[ssarrow, line width=\edgewidth] (B\s)--(B\t); 
};

\foreach \x in {7,9,11,13} {
\filldraw [fill=white, line width=\edgewidth] (B\x) circle [radius=\noderad] ;}; 
\foreach \x in {8,10,12,14} {
\filldraw [fill=black, line width=\edgewidth] (B\x) circle [radius=\noderad] ; };
\foreach \n/\a/\r in {12/335/0.8, 23/270/0.8, 34/210/0.8, 45/150/0.8, 56/90/0.8, 16/30/0.78} {
\coordinate (V\n) at (\a:\r*\boundaryrad); }; 
\node[blue] at (V12) {\ydiagram{4,4}}; 
\node[blue] at (V23) {\ydiagram{3,3}}; 
\node[blue] at (V34) {\ydiagram{2,2}}; 
\node[blue] at (V45) {\ydiagram{1,1}}; 
\node[blue] at (V56) {$\varnothing$};
\node[blue] at (V16) {\ydiagram{4}};  
\foreach \n/\a/\r in {26/-15/0.35, 36/240/0.2, 46/130/0.4} {
\coordinate (V\n) at (\a:\r*\boundaryrad); }; 
\node[blue] at (V26) {\ydiagram{3}}; 
\node[blue] at (V36) {\ydiagram{2}}; 
\node[blue] at (V46) {\ydiagram{1}}; 
\end{tikzpicture}};

\node at (6,0) {
\begin{tikzpicture}[myarrow/.style={-latex}]
\draw (0,0) circle(\boundaryrad) [gray, line width=\edgewidth];
\foreach \n in {1,...,6} { \draw (B\n+) node {\small $\n$}; }
\foreach \n/\a/\r in {7/60/0.65, 8/0/0.65, 9/310/0.65, 10/270/0.5, 11/240/0.7, 12/200/0.4, 13/170/0.65, 14/120/0.65} {
\coordinate (B\n) at (\a:\r*\boundaryrad); }; 
\foreach \s/\t in {7/8, 8/9, 9/10, 10/11, 11/12, 12/13, 13/14, 14/7, 7/10, 7/12, 
1/7, 2/8, 3/9, 4/11, 5/13, 6/14} {
\draw[line width=\edgewidth] (B\s)--(B\t); 
};

\foreach \x in {7,9,11,13} {
\filldraw [fill=white, line width=\edgewidth] (B\x) circle [radius=\noderad] ;}; 
\foreach \x in {8,10,12,14} {
\filldraw [fill=black, line width=\edgewidth] (B\x) circle [radius=\noderad] ; };
\foreach \n/\a/\r in {12/335/0.8, 23/270/0.8, 34/210/0.8, 45/150/0.8, 56/90/0.8, 16/30/0.78} {
\coordinate (V\n) at (\a:\r*\boundaryrad); }; 
\node[blue] at (V12) {\ydiagram{4,4}}; 
\node[blue] at (V23) {\ydiagram{3,3}}; 
\node[blue] at (V34) {\ydiagram{2,2}}; 
\node[blue] at (V45) {\ydiagram{1,1}}; 
\node[blue] at (V56) {$\varnothing$};
\node[blue] at (V16) {\ydiagram{4}};  
\foreach \n/\a/\r in {26/-15/0.35, 36/240/0.2, 46/130/0.4} {
\coordinate (V\n) at (\a:\r*\boundaryrad); }; 
\node[blue] at (V26) {\ydiagram{3}}; 
\node[blue] at (V36) {\ydiagram{2}}; 
\node[blue] at (V46) {\ydiagram{1}}; 
\draw[myarrow, red, line width=\edgewidth+1.5] (B1)--(B7)--(B10)--(B11)--(B12)--(B13)--(B5); 
\draw[myarrow, red, line width=\edgewidth+1.5] (B2)--(B8)--(B9)--(B3); 
\end{tikzpicture}};

\node at (12,0) {
\begin{tikzpicture}[myarrow/.style={-latex}]
\draw (0,0) circle(\boundaryrad) [gray, line width=\edgewidth];
\foreach \n in {1,...,6} { \draw (B\n+) node {\small $\n$}; }
\foreach \n/\a/\r in {7/60/0.65, 8/0/0.65, 9/310/0.65, 10/270/0.5, 11/240/0.7, 12/200/0.4, 13/170/0.65, 14/120/0.65} {
\coordinate (B\n) at (\a:\r*\boundaryrad); }; 
\foreach \s/\t in {7/8, 8/9, 9/10, 10/11, 11/12, 12/13, 13/14, 14/7, 7/10, 7/12, 
1/7, 2/8, 3/9, 4/11, 5/13, 6/14} {
\draw[line width=\edgewidth] (B\s)--(B\t); 
};

\foreach \x in {7,9,11,13} {
\filldraw [fill=white, line width=\edgewidth] (B\x) circle [radius=\noderad] ;}; 
\foreach \x in {8,10,12,14} {
\filldraw [fill=black, line width=\edgewidth] (B\x) circle [radius=\noderad] ; };
\foreach \n/\a/\r in {12/335/0.8, 23/270/0.8, 34/210/0.8, 45/150/0.8, 56/90/0.8, 16/30/0.78} {
\coordinate (V\n) at (\a:\r*\boundaryrad); }; 
\node[blue] at (V12) {\ydiagram{4,4}}; 
\node[blue] at (V23) {\ydiagram{3,3}}; 
\node[blue] at (V34) {\ydiagram{2,2}}; 
\node[blue] at (V45) {\ydiagram{1,1}}; 
\node[blue] at (V56) {$\varnothing$};
\node[blue] at (V16) {\ydiagram{4}};  
\foreach \n/\a/\r in {26/-15/0.35, 36/240/0.2, 46/130/0.4} {
\coordinate (V\n) at (\a:\r*\boundaryrad); }; 
\node[blue] at (V26) {\ydiagram{3}}; 
\node[blue] at (V36) {\ydiagram{2}}; 
\node[blue] at (V46) {\ydiagram{1}}; 
\draw[myarrow, red, line width=\edgewidth+1.5] (B1)--(B7)--(B12)--(B13)--(B5); 
\draw[myarrow, red, line width=\edgewidth+1.5] (B2)--(B8)--(B9)--(B3); 
\end{tikzpicture}};
\end{tikzpicture}}
\end{center}
\caption{The acyclic perfect orientation $\calO$ of $G_{2,6}^{\rec}$ with $I_\calO=\{1,2\}$ (left), 
the $J$-flows for $J=\{3,5\}$ (middle, right)}
\label{fig_Jflow}
\end{figure}
\end{example}

\begin{definition}
\label{def_order_Young}
For the Young diagrams $\lambda_1,\lambda_2\in\widetilde{\calP}_{k,n}$, 
we define a partial order $\lambda_1\prec\lambda_2$ if $\lambda_1$ is contained in $\lambda_2$ 
when we justify the top-left corners of $\lambda_1,\lambda_2$. 
In other words, $\lambda_1\prec\lambda_2$ if and only if $J_1>J_2$, 
where $J_1,J_2\in{ [n] \choose k }$ are the $k$-subsets corresponding to $\lambda_1,\lambda_2$ as in Definition~\ref{def_Young}, 
and $<$ is a component-wise ordering, i.e., $j_i\ge j_i'$ holds for each $i=1,\dots,k$ 
where $J_1=\{j_1,\dots,j_k\}$ (resp. $J_2=\{j_1',\dots,j_k'\}$) with $1 \leq j_1 < \cdots < j_k \leq n$ (resp. $1 \leq j_1' < \cdots < j_k' \leq n$). 
With this convention, the minimal element in $\widetilde{\calP}_{k,n}$ is $\varnothing$. 
\end{definition}

We consider the lexicographic order $<_{\rm lex}$ on monomials in $\{x_\lambda\}_{\lambda\in\widetilde{\calP}_G}$. 
Namely, if $x_{\lambda_1}<_{\rm lex}\cdots<_{\rm lex} x_{\lambda_{s+1}}$ where $\lambda_i\in\widetilde{\calP}_G$ and $s=k(n-k)$, 
then $x_{\lambda_1}^{a_1}\cdots x_{\lambda_{s+1}}^{a_{s+1}}<_{\rm lex} x_{\lambda_1}^{b_1}\cdots x_{\lambda_{s+1}}^{b_{s+1}}$ 
if and only if there exists $k$ $(1\le k\le s+1)$ such that $a_i=b_i$ for $i=1,\dots,k-1$ and $a_k<b_k$. 
This determines a total order on $\ZZ^{\widetilde{\calP}_G}$. 

Using these notions, we define a valuation on $\CC(\XX_{k,n}){\setminus}\{0\}$ which was developed in \cite[Section~8]{RW}. 
By \cite[Propositions~6.8 and 7.6]{RW}, there is a certain way to write any polynomial $f$ in the Pl\"{u}cker coordinates for $\Gr(k,n)$ 
uniquely as a Laurent polynomial in $\{x_\lambda\}_{\lambda\in\widetilde{\calP}_G}$. 
Using this Laurent polynomial expression, we first define the valuation 
\[
\val_G:A_{k,n}{\setminus}\{0\}\rightarrow \ZZ^{\widetilde{\calP}_G}
\]
on $A_{k,n}{\setminus}\{0\}$ as follows. 
We suppose that $0\neq f\in A_{k,n}$ is described as $f=\sum_{\lambda\in\widetilde{\calP}_G}a_\lambda x_\lambda^{m_\lambda}$
with $a_\lambda\in\CC, \, m_\lambda\in\ZZ^{\widetilde{\calP}_G}$. 
Then, we define $\val_G(f)\coloneqq\minlex\{m_\lambda \mid a_\lambda\neq 0\}$ where the minimality is determined by the above lexicographic order. 
For $f/g\in\CC(\XX_{k,n}){\setminus}\{0\}$ with $f,g\in A_{n,k}$, we define $\val_G(f/g)=\val_G(f)-\val_G(g)$, 
and this extends $\val_G$ to the valuation on the field of rational functions of $\XX_{k,n}$: 
\[
\val_G:\CC(\XX_{k,n}){\setminus}\{0\}\rightarrow \ZZ^{\widetilde{\calP}_G}. 
\]
For a Pl\"{u}cker coordinate $\overline{p}_J$ with $J\in{ [n] \choose k }$, the expression in $\{x_\lambda\}_{\lambda\in\widetilde{\calP}_G}$ is given by the flow polynomial $P^G_J$. 
Thus, the valuation is given by $\val_G(\overline{p}_J)=\val_G(P^G_J)$, 
which is the exponent vector of the minimal term in $P^G_J$ with respect to the above lexicographic order. 
We call the $J$-flow corresponding to the minimal term in $P^G_J$ the \emph{strongly minimal flow} from $\{1,\dots, k \}$ to $J$, 
and denote by $F_{\min}$, that is, it satisfies $\val_G(\overline{p}_J)=\wt(F_{\min})$. 
As we mentioned, $x_\varnothing$ does not appear in $P^G_J$ for any $J$. 
Thus, we omit the coordinate corresponding to $x_\varnothing$ from the valuation, but we use the same notation by abuse of notation, 
that is, we let 
\[
\val_G:\CC(\XX_{k,n}){\setminus}\{0\}\rightarrow \ZZ^{\calP_G}. 
\] 
We note that for the Pl\"{u}cker coordinate $\overline{p}_{[k]}=\overline{p}_{\{1,\dots, k\}}$, 
we have $\val_G(\overline{p}_{[k]})=(0,\dots,0)$ 
because $P^G_{[k]}=1$ when we fix the perfect orientation $\calO$ with $I_\calO=[k]$. 
Thus, we have $\val_G(\overline{p}_J/\overline{p}_{[k]})=\val_G(\overline{p}_J)$ in particular. 

\medskip

Using this valuation, we now define the Newton--Okounkov body $\Delta_G$ which was introduced in \cite{Oko1,Oko2} 
and developed in several papers e.g., \cite{KK1,KK2,LM}. 

\begin{definition}
\label{def_NObody}
We take the divisor $D=\{\overline{p}_{[k]}= 0\}$ in $\XX_{k,n}$. 
Then $L_{r}\coloneqq \rmH^0(\XX_{k,n},\calO(rD))$ is generated by monomials with the form $M/(\overline{p}_{[k]})^r$, 
where $M$ is a monomial of degree $r$ in Pl\"{u}cker coordinates. 
The \emph{Newton--Okounkov body} $\Delta_G(D)$ associated to $\val_G$ and $D$ is defined by 
\[
\Delta_G(D)\coloneqq \overline{\Conv\left(\bigcup_{r=1}^\infty \frac{1}{r}\val_G(L_r)\right)}. 
\]
\end{definition}

The Newton--Okounkov body is also defined for other divisors in $\XX_{k,n}$. 
Since we use only the divisor $D=\{\overline{p}_{[k]}= 0\}$ in this paper, we simply denote $\Delta_G(D)$ by $\Delta_G$. 
Besides, there is another polytope called the \emph{superpotential polytope} $\Gamma_G$ associated to $G$, 
which is defined by inequalities obtained by the tropicalization of the \emph{superpotential} $W$ (see \cite[Section~10]{RW}). 
Here, the superpotential $W$ is a certain regular function defined on the side of the dual Grassmannian $\check{\XX}_{k,n}$ 
which parametrizes $(n-k)$-planes in $(\CC^n)^*$, see \cite[Section~6]{MR}, \cite[Section~10]{RW} for more details. 
In this article, we do not observe much about the superpotential polytope, 
but we note that it was shown in \cite[Theorem~16.18]{RW} that it coincides with the Newton--Okounkov polytope, i.e., $\Delta_G=\Gamma_G$, 
and $\Delta_G$ is a rational polytope. Thus, we call $\Delta_G$ the \emph{Newton--Okounkov polytope} associated to $G$. 
We note that the polytope $\Delta_G$ is not necessarily a lattice polytope, see e.g., \cite[Section~9]{RW}, \cite[Theorem~3]{Bos}. 

Although it is hard to compute Newton--Okounkov bodies in general, in our situation we know the explicit description of lattice points of $\Delta_G$.

\begin{theorem}[{\cite[Corollaries~16.10 and 16.19, Theorem~16.12]{RW}}]
\label{thm_NO=C}
Let $G$ be a plabic graph of type $\pi_{k,n}$. 
Then $\Delta_G$ has ${n \choose k}$ lattice points, and they are precisely the valuations $\val_G(\overline{p}_J)$ of Pl\"{u}cker coordinates. 
In particular, if $\Delta_G$ is a lattice polytope, 
then we have $\Delta_G=\Conv_G$, where $\Conv_G\coloneqq \Conv\left(\val_G(L_1)\right)=\Conv\{\val_G(\overline{p}_J)\mid J\in{[n] \choose k}\}$. 
\end{theorem}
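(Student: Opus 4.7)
The plan is to prove the two assertions in order: first that the valuations $\val_G(\overline{p}_J)$ yield $\binom{n}{k}$ distinct lattice points of $\Delta_G$, and then that these exhaust the lattice points of $\Delta_G$. For the final clause, one observes that once both inclusions are established, the equality with $\Conv_G$ is forced when $\Delta_G$ is a lattice polytope.

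First I would check that each $\val_G(\overline{p}_J)$ lies in $\Delta_G\cap \ZZ^{\calP_G}$. Since $D=\{\overline{p}_{[k]}=0\}$, the space $L_1=\rmH^0(\XX_{k,n},\calO(D))$ is spanned by the rational sections $\overline{p}_J/\overline{p}_{[k]}$ for $J\in\binom{[n]}{k}$, and by $\val_G(\overline{p}_{[k]})=\mathbf{0}$ we get $\val_G(\overline{p}_J/\overline{p}_{[k]})=\val_G(\overline{p}_J)$, which lies in $\val_G(L_1)\subseteq \Delta_G$. Integrality is built into the codomain of $\val_G$. Injectivity of $J\mapsto \val_G(\overline{p}_J)$ can then be extracted from the flow-polynomial description \eqref{eq_flow_poly}: the strongly minimal flow $F_{\min}$ from $[k]$ to $J$ has weight $\wt(F_{\min})$ equal to the monomial realizing $\val_G(\overline{p}_J)$, and recording which boundary vertices are sinks recovers $J$. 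This will give $\binom{n}{k}$ distinct lattice points.

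The substantial step is to show that every lattice point of $\Delta_G$ arises in this way. For this I would invoke the identification $\Delta_G=\Gamma_G$ from \cite[Theorem~16.18]{RW}, where $\Gamma_G$ is the superpotential polytope cut out by the tropicalized superpotential inequalities on the dual Grassmannian $\check{\XX}_{k,n}$. The combinatorial structure of $\Gamma_G$ (one inequality per face label $\lambda\in\calP_G$ of $G$, coming from a monomial summand of $W$) allows an explicit enumeration of lattice points. Here I would identify the tropical inequalities for $\Gamma_G$ with the tropical Pl\"ucker relations and, using the existence of strongly minimal $J$-flows for every $J$, match each lattice point of $\Gamma_G$ with a unique $J\in\binom{[n]}{k}$ via $J\mapsto \val_G(\overline{p}_J)$. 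Alternatively, one can argue via the toric degeneration from Theorem~\ref{thm_toric_degeneration}: the integer points of the scaled polytope $r\Delta_G$ correspond to a basis of $L_r$, and in degree $r=1$ this basis must reduce to the Pl\"ucker coordinates (up to the chosen section $\overline{p}_{[k]}$), because $L_1$ is spanned by them and $\val_G$ separates them.

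The main obstacle is precisely this second direction: one needs genuine control over lattice points of a polytope that need not be integral, so the easy dimension argument $\dim L_1 = \binom{n}{k}$ does not directly translate into counting lattice points of $\Delta_G$. The cleanest way around this is to pass through the tropical/superpotential description, where the defining inequalities and their lattice-point solutions are combinatorially explicit. Once both inclusions are in hand, the final sentence is immediate: if $\Delta_G$ is a lattice polytope then its vertices are among its lattice points and hence among the $\val_G(\overline{p}_J)$, forcing $\Delta_G=\Conv_G=\Conv\{\val_G(\overline{p}_J)\mid J\in\binom{[n]}{k}\}$.
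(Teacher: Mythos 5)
You should first note that the paper itself contains no proof of this statement: it is quoted verbatim from Rietsch--Williams \cite[Corollaries~16.10 and 16.19, Theorem~16.12]{RW}, so the only ``proof'' in the paper is the citation. Judged as a self-contained argument, your proposal has a genuine gap, and it sits exactly at the step you yourself flag as substantial. The easy inclusion is fine: each $\val_G(\overline{p}_J)=\val_G(\overline{p}_J/\overline{p}_{[k]})$ lies in $\val_G(L_1)\subseteq\Delta_G\cap\ZZ^{\calP_G}$. But the two real claims are not established. For distinctness of the $\binom{n}{k}$ points, you assert that from $\wt(F_{\min})$ one can ``record which boundary vertices are sinks''; however, the valuation is only the exponent vector in $\ZZ^{\calP_G}$, not the flow itself, and recovering $J$ from that vector needs an argument (in \cite{RW} it falls out of the explicit description of the lattice points of the superpotential polytope). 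More seriously, the exhaustion step --- every lattice point of $\Delta_G$ equals some $\val_G(\overline{p}_J)$ --- is precisely the content of \cite[Theorem~16.12 and Corollary~16.10]{RW}. Saying that the inequality description of $\Gamma_G$ ``allows an explicit enumeration'' and that one can ``match each lattice point with a unique $J$'' restates the theorem rather than proving it; all of the combinatorial work of \cite[Section~16]{RW} lives exactly there. So your main route mirrors the strategy of the cited source but is circular at its key step.

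Your fallback argument via the toric degeneration is not merely incomplete but would fail. Lattice points of $r\Delta_G$ correspond to (valuations of) a basis of $L_r$ only when the associated value semigroup is saturated, equivalently when the dilated polytope has the integer decomposition property; this is exactly what can fail when $\Delta_G$ is not integral, which is why Theorem~\ref{thm_toric_degeneration} must pass to $r_G\Delta_G$ rather than $\Delta_G$. In degree $r=1$ one only gets the inclusion $\val_G(L_1)\subseteq\Delta_G\cap\ZZ^{\calP_G}$, with no a priori reverse inclusion: the possibility that $\Delta_G$ has ``extra'' lattice points not of the form $\val_G(\overline{p}_J)$ is precisely what the cited theorem rules out, so it cannot be used as the mechanism to rule it out. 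Only your closing deduction is sound as stated: once both assertions are granted, integrality of $\Delta_G$ forces $\Delta_G=\Conv_G$, since the vertices of a lattice polytope are among its lattice points.
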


\begin{example}
\label{ex_NOpolytope_Gr(2,6)}
We again consider the rectangle plabic graph $G=G_{2,6}^{\rec}$ (see Figure~\ref{ex_plabic3}). 
In Example~\ref{ex_Jflow}, we had the flow polynomial 
\begin{equation}
\label{eq_flowpolynomial}
\ytableausetup{boxsize=3pt}
P^G_{\{3,5\}}=x_{\ydiagram{4,4}}^2 x_{\ydiagram{3,3}}x_{\ydiagram{2,2}}x_{\ydiagram{4}}x_{\ydiagram{3}}
+x_{\ydiagram{4,4}}^2 x_{\ydiagram{3,3}}x_{\ydiagram{2,2}}x_{\ydiagram{4}}x_{\ydiagram{3}}x_{\ydiagram{2}}.
\end{equation}
We see that the lexicographically minimal term is the first monomial in \eqref{eq_flowpolynomial}, 
and hence the middle of Figure~\ref{fig_Jflow} shows the strongly minimal flow from $\{1,2\}$ to $\{3,5\}$. 
Thus, we have 
\[
\val_G(\overline{p}_{\{3,5\}})=(0,0,1,1,0,1,1,2), 
\]
where the components are arranged with the same order as Table~\ref{table_rec_Gr(2,6)}. 
By a computation similar to the above, we have the valuations as in Table~\ref{table_rec_Gr(2,6)} below. 

\medskip

\begin{table}[H]
\begin{center}
\ytableausetup{boxsize=3.5pt}

\scalebox{0.9}{
{\renewcommand\arraystretch{1.1}\tabcolsep= 10pt
\begin{tabular}{c|cccccccc}
&\ydiagram{1}&\ydiagram{2}&\ydiagram{3}&\ydiagram{4}&
\raise0.5ex\hbox{\ydiagram{1,1}}&\raise0.5ex\hbox{\ydiagram{2,2}}&\raise0.5ex\hbox{\ydiagram{3,3}}&\raise0.5ex\hbox{\ydiagram{4,4}} \\ \hline
$\{1,2\}$&0&0&0&0&0&0&0&0 \\ \hline
$\{1,3\}$&0&0&0&0&0&0&0&1 \\ \hline
$\{1,4\}$&0&0&0&0&0&0&1&1 \\ \hline
$\{1,5\}$&0&0&0&0&0&1&1&1 \\ \hline
$\{1,6\}$&0&0&0&0&1&1&1&1 \\ \hline
$\{2,3\}$&0&0&0&1&0&0&0&1 \\ \hline
$\{2,4\}$&0&0&0&1&0&0&1&1 \\ \hline
$\{2,5\}$&0&0&0&1&0&1&1&1 \\ \hline
$\{2,6\}$&0&0&0&1&1&1&1&1 \\ \hline
$\{3,4\}$&0&0&1&1&0&0&1&2 \\ \hline
$\{3,5\}$&0&0&1&1&0&1&1&2 \\ \hline
$\{3,6\}$&0&0&1&1&1&1&1&2 \\ \hline
$\{4,5\}$&0&1&1&1&0&1&2&2 \\ \hline
$\{4,6\}$&0&1&1&1&1&1&2&2 \\ \hline
$\{5,6\}$&1&1&1&1&1&2&2&2 
\end{tabular}}}
\end{center}
\caption{The valuation $\val_G(\overline{p}_J)$ of the Pl\"{u}cker coordinate $\overline{p}_J$ for any $J\in{[6] \choose 2}$ (we write only $J$ instead of $\overline{p}_J$ in the leftmost column).}
\label{table_rec_Gr(2,6)}
\end{table}

\medskip

As we will see in Proposition~\ref{prop_unimodular_GT}, 
the Newton--Okounkov polytope associated to a rectangle plabic graph is unimodularly equivalent to a Gelfand--Tsetlin polytope, 
which is a lattice polytope. 
Thus, by Theorem~\ref{thm_NO=C}, the Newton--Okounkov polytope associated to $G_{2,6}^{\rec}$ is 
the convex hull of the row vectors given in the above table. 
\end{example}

The Newton--Okounkov polytope $\Delta_G$ gives a toric degeneration of $\XX_{k,n}$ (after rescaling the polytopes) as follows. 

\begin{theorem}[{\cite[Corollary~17.11]{RW}}]
\label{thm_toric_degeneration}
There is a flat degeneration of $\XX_{k,n}$ to the normal projective toric variety 
associated to the polytope $r_G\Delta_G$. 
Here, $r_G$ is the minimal integer such that the dilated polytope $r_G\Delta_G$ 
has the ``integer decomposition property" $($see e.g., \cite{CHHH} for more details$)$. 
\end{theorem}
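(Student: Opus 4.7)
The plan is to invoke the general framework of Anderson and Kaveh--Manon, by which a valuation $v$ on the homogeneous coordinate ring of a projective variety $\XX$ produces a flat toric degeneration of $\XX$ provided that (i) $v$ has one-dimensional leaves (so $\dim_\CC\mathrm{gr}_v(A)_m\leq 1$ for every $m$) and (ii) the value semigroup is finitely generated. The central fibre is then the projective toric variety associated to the (appropriately rescaled) Newton--Okounkov body, which is \emph{normal} precisely when the rescaled body satisfies the integer decomposition property. So the proof reduces to verifying these two hypotheses for $\val_G$ and $\Delta_G$, and then interpreting the role of $r_G$.

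First I would check the one-dimensional leaves property. This is essentially forced by the construction: $\val_G$ is defined by reading off the lexicographically minimal exponent vector in the expansion into the $\calX$-cluster monomials $\{x_\lambda\}_{\lambda\in\widetilde{\calP}_G}$, and the lex order on $\ZZ^{\widetilde{\calP}_G}$ is a well-order. A standard argument shows that any two elements of $A_{k,n}$ with the same $\val_G$-value differ by an appropriate $\CC$-scalar modulo elements of strictly larger valuation, forcing the graded pieces of $\mathrm{gr}_{\val_G}(A_{k,n})$ to be at most one-dimensional.

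Second I would establish finite generation of the value semigroup $S_G\coloneqq\{(r,\val_G(f))\mid r\geq 0,\; 0\neq f\in L_r\}\subset \ZZ_{\geq 0}\times\ZZ^{\calP_G}$. By Theorem~\ref{thm_NO=C} the level-one slice $\val_G(L_1)$ is exactly the set of lattice points of $\Delta_G$, realised concretely by the strongly minimal flows $F_{\min}$ associated with the Plücker coordinates $\overline{p}_J$. Since the $\overline{p}_J$ generate $A_{k,n}$ and $\val_G$ is sub-additive, $\val_G(L_r)$ contains the $r$-fold Minkowski sum of $\val_G(L_1)$; the identification $\Delta_G=\Gamma_G$ with the superpotential polytope provides an explicit inequality description that controls the lattice points of every dilate. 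Rescaling by the minimal integer $r_G$ for which $r_G\Delta_G$ is IDP then forces every lattice point of $r\cdot r_G\Delta_G$ to decompose as a sum of lattice points of $r_G\Delta_G$; this simultaneously yields finite generation of the $r_G$-rescaled semigroup in degree one and normality of the resulting toric variety.

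With both hypotheses in hand, the standard Rees-algebra construction associated with the filtration of $A_{k,n}$ by $\val_G$ produces a flat $\CC[t]$-algebra $\mathcal{R}$ whose generic fibre $\mathcal{R}/(t-1)$ recovers $A_{k,n}$ and whose special fibre $\mathcal{R}/(t)\cong\CC[S_G]$ is the affine semigroup algebra; taking $\Proj$ with the $r_G$-rescaled grading yields the claimed flat degeneration of $\XX_{k,n}$ to the normal projective toric variety of $r_G\Delta_G$. The main obstacle is step two: a priori higher dilates of $\Delta_G$ could contribute fresh generators for arbitrarily large $r$, and finite generation is not automatic from Theorem~\ref{thm_NO=C} alone. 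The resolution, following the strategy of \cite{RW}, uses the combinatorics of $J$-flows on $G$ together with the tropicalised superpotential inequalities to show that every lattice point of $r_G\Delta_G$ is realised as $\val_G$ of an honest element of $L_{r_G}$, and that the IDP of $r_G\Delta_G$ propagates this to all higher levels — this is precisely what the minimal $r_G$ in the statement is engineered to guarantee.
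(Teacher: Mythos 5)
There is nothing in this paper to compare your proposal against: Theorem~\ref{thm_toric_degeneration} is imported verbatim from \cite[Corollary~17.11]{RW}, and the paper supplies no proof of it anywhere (it is used as a black box). So your attempt can only be measured against the argument in \cite{RW} itself. On that footing, your architecture is the correct one and is in fact the route Rietsch and Williams take: invoke the Anderson/Kaveh--Manon machinery \cite{And,KM}, note that $\val_G$ together with the degree is a full-rank valuation (so one-dimensional leaves are automatic, making your first step fine even if loosely argued), establish finite generation of the value semigroup, and pass to the $r_G$-th Veronese subalgebra so that the integer decomposition property of $r_G\Delta_G$ yields saturation, hence a projectively normal limit; the Rees-algebra construction then produces the flat family.

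The genuine gap is at exactly the step you yourself flag as the main obstacle, and your proposed resolution does not close it. Writing that ``following the strategy of \cite{RW}'' one shows every lattice point of $r_G\Delta_G$ is realised as $\val_G$ of an element of $L_{r_G}$ is not an argument but a citation of the very result being proved --- that realizability claim, resting on the identification $\Delta_G=\Gamma_G$ and the analysis in Sections 16--17 of \cite{RW}, \emph{is} the substance of \cite[Corollary~17.11]{RW}. The purely polytopal half of your argument cannot substitute for it: IDP of $r_G\Delta_G$ says that every lattice point of $m\,r_G\Delta_G$ is a sum of $m$ lattice points of $r_G\Delta_G$, but it says nothing about whether those degree-one lattice points lie in the image of $\val_G$ on $L_{r_G}$. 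Theorem~\ref{thm_NO=C} gives realizability only at level $r=1$ (the valuations of Pl\"ucker coordinates), and multiplicativity of $\val_G$ only produces sums of such valuations --- which, as the paper itself stresses via the distinction between $\Conv_G$ and $\Delta_G$, need not exhaust the lattice points of the dilated polytope. Without an independent proof that the value semigroup is finitely generated (equivalently, that no ``fresh'' generators appear in high degree), the degeneration does not follow, so the proposal is an accurate reconstruction of the skeleton of the argument with its hardest bone missing.
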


\begin{remark}
\label{rem_GTtableau}
We remark that we can compute the valuation of each Pl\"{u}cker coordinate for $G_{k,n}^{\rm rec}$ by using the grid labeled by the rectangle Young diagrams as follows. 
We first consider the $k\times (n-k)$ grid and label the $i\times j$ entry with the rectangle Young diagram of shape $i\times j$, 
and fill the numbers $1,\dots,n$ in the right and the bottom sides of the grid (see the left of Figure~\ref{fig_trip_rectangle}). 
We give the orientation to the grid (except the right and the bottom sides) so that vertical lines are oriented from the top to the bottom and horizontal lines are oriented from the right to the left. 
Then we consider a lattice path $p$ from $1$ to $n$, which determines the $k$-subset $J$ by taking the south steps labeling of the lattice path as
we did in Definition~\ref{def_Young}. 
We also draw lattice paths on the left-hand side of $p$ so that they are separated by ``one step", see the right of Figure~\ref{fig_trip_rectangle}. 
Note that such lattice paths can be read off from a \emph{Gelfand--Tsetlin tableau} which is a certain array of integers 
in the $k\times (n-k)$ grid (see \cite[Section~14]{RW} for more details). 
By \cite[Lemma~14.2]{RW}, we see that these lattice paths correspond to the strongly minimal flow on $G_{k,n}^{\rm rec}$ from $[k]$ to $J\in{[n] \choose k}$, 
and hence we can compute the valuation of each Pl\"{u}cker coordinate for $G_{k,n}^{\rm rec}$ using this labeled grid. 
(It might be easier to observe this claim if we write the empty diagram $\varnothing$ at the upper left margin.) 
We note that we will also use this grid description in the proof of Theorem~\ref{thm_unimodular_FFLV2}. 

\begin{figure}[H]
\begin{center}
\scalebox{0.9}{
\begin{tikzpicture}
\newcommand{\edgewidth}{0.035cm} 
\ytableausetup{boxsize=4.5pt}

\usetikzlibrary{decorations.markings}
\tikzset{sarrow/.style={decoration={markings,
                               mark=at position .65 with {\arrow[scale=1.3, red]{latex}}},
                               postaction={decorate}}}

\foreach \n/\a/\b in {00/0/0,10/1/0,20/2/0,30/3/0,40/4/0,01/0/1,11/1/1,21/2/1,31/3/1,41/4/1,02/0/2,12/1/2,22/2/2,32/3/2,42/4/2} {
\coordinate (V\n) at (\a,\b); };

\node at (0,0) {
\begin{tikzpicture}
\foreach \s/\t in{00/02,10/12,20/22,30/32,40/42,00/40,01/41,02/42} {
\draw[line width=\edgewidth] (V\s)--(V\t); }
\foreach \s/\t in{42/32,32/22,22/12,12/02,41/31,31/21,21/11,11/01,32/31,31/30,22/21,21/20,12/11,11/10,02/01,01/00} {
\draw[sarrow, line width=\edgewidth] (V\s)--(V\t);
} 
\node at (4.3,2) {$1$}; \node at (4.3,1) {$2$}; \node at (3,-0.3) {$3$}; 
\node at (2,-0.3) {$4$}; \node at (1,-0.3) {$5$}; \node at (0,-0.3) {$6$}; 
\foreach \n/\a/\b in {44/3.5/0.5,33/2.5/0.5,22/1.5/0.5,11/0.5/0.5,4/3.5/1.5,3/2.5/1.5,2/1.5/1.5,1/0.5/1.5} {
\coordinate (Y\n) at (\a,\b); }; 
\node[blue] at (Y44) {\ydiagram{4,4}}; \node[blue] at (Y33) {\ydiagram{3,3}}; 
\node[blue] at (Y22) {\ydiagram{2,2}}; \node[blue] at (Y11) {\ydiagram{1,1}}; 
\node[blue] at (Y4) {\ydiagram{4}};  \node[blue] at (Y3) {\ydiagram{3}}; 
\node[blue] at (Y2) {\ydiagram{2}}; \node[blue] at (Y1) {\ydiagram{1}}; 
\node[blue] at (-0.3,2.3) {$\varnothing$};
\end{tikzpicture}}; 

\node at (7,0) {
\begin{tikzpicture}[myarrow/.style={-latex}]
\foreach \s/\t in{00/02,10/12,20/22,30/32,40/42,00/40,01/41,02/42} {
\draw[line width=\edgewidth] (V\s)--(V\t); }
\node at (4.3,2) {$1$}; \node at (4.3,1) {$2$}; \node at (3,-0.3) {$3$}; 
\node at (2,-0.3) {$4$}; \node at (1,-0.3) {$5$}; \node at (0,-0.3) {$6$}; 
\foreach \n/\a/\b in {44/3.5/0.5,33/2.5/0.5,22/1.5/0.5,11/0.5/0.5,4/3.5/1.5,3/2.5/1.5,2/1.5/1.5,1/0.5/1.5} {
\coordinate (Y\n) at (\a,\b); }; 
\node[blue] at (Y44) {\ydiagram{4,4}}; \node[blue] at (Y33) {\ydiagram{3,3}}; 
\node[blue] at (Y22) {\ydiagram{2,2}}; \node[blue] at (Y11) {\ydiagram{1,1}}; 
\node[blue] at (Y4) {\ydiagram{4}};  \node[blue] at (Y3) {\ydiagram{3}}; 
\node[blue] at (Y2) {\ydiagram{2}}; \node[blue] at (Y1) {\ydiagram{1}}; 
\node[blue] at (-0.3,2.3) {$\varnothing$};
\draw[myarrow, red, line width=\edgewidth+1.5] (V42)--(V22)--(V21)--(V11)--(V10); 
\draw[myarrow, red, line width=\edgewidth+1.5] (V41)--(V31)--(V30); 
\end{tikzpicture}}; 

\end{tikzpicture}}
\end{center}
\caption{The $2\times 4$ grid labeled by rectangle Young diagrams (left), and the lattice paths corresponding to the strongly minimal flow on $G_{2,6}^{\rm rec}$ from $\{1,2\}$ to $\{3,5\}$, which is also given in Example~\ref{ex_Jflow} and \ref{ex_NOpolytope_Gr(2,6)}.}
\label{fig_trip_rectangle}
\end{figure}
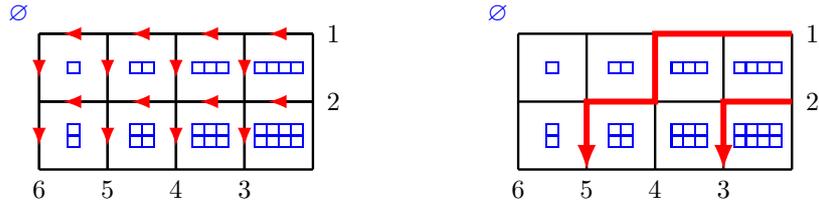
\end{remark}

\section{Preliminaries on combinatorial mutations of polytopes} 
\label{sec_prelim_mutation}

In this section, we review the notion of the \emph{combinatorial mutation of polytopes}, 
which arises from the new approach for classifying Fano manifolds that uses the mirror symmetry. 
In the context of mirror symmetry, a Fano manifold is expected to correspond to a certain Laurent polynomial (see \cite{CCGGK}). 
In general, there are several Laurent polynomials mirror to a given Fano manifold. 
To understand the relationship among such Laurent polynomials, the \emph{mutation of Laurent polynomials}, 
which is a birational transformation analogue to a cluster transformation, was introduced in \cite{GU}. 
On the other hand, a Laurent polynomial $f$ determines the Newton polytope $\Newt(f)$ as the convex hull of exponent vectors of monomials in $f$. 
When Laurent polynomials $f$ and $g$ are related by the mutation of Laurent polynomials, 
the mutation induces the operation between $\Newt(f)$ and $\Newt(g)$, 
which is exactly the combinatorial mutation of polytopes \cite{ACGK}. 

Following \cite{ACGK} and \cite{Hig}, we introduce the combinatorial mutations of polytopes. 
Let $P\subset N_\RR$ be a lattice polytope, and we assume that $P$ contains the origin ${\bf 0}$. 
We consider the family of polyhedra (not necessarily polytopes) which are of the following form: 
$$\calP \coloneqq \left\{ \bigcap_{v \in S} H_{v,\geq -1} \cap \bigcap_{v' \in T}H_{v',\geq 0} 
\subset N_\RR \mid S,T \subset M_\RR, \; |S|, |T| < \infty \right\},$$
where $H_{v,\geq h}$ denotes the affine half-space defined by $\{u \in N_\RR \mid \langle v,u \rangle \geq h\}$ for $v \in M_\RR$ and $h \in \RR$. 
A lattice polytope containing the origin of $N_\RR$ belongs to $\calP$ but the one not containing the origin does not belong to $\calP$ 
since one of the supporting hyperplanes of such polytope is of the form $H_{v, \geq k}$ for some $v \in M$ and $k >0$. 
Similarly, we define $\calQ$ by swapping the roles of $N_\RR$ and $M_\RR$. 
For a $P \in \calP$, we define the \emph{polar dual} $P^* \subset M_\RR$ of $P$ as 
\[
P^* \coloneqq \{v \in M_\RR\mid\langle v,u\rangle\ge-1 \text{ for all } u\in P\}\subset M_\RR. 
\]
Then we have $P^* \in \calQ$ and $(P^*)^*=P$ for any $P \in \calP$, see \cite[Theorem~9.1]{Sch}. 
Furthermore, for $Q \in \calQ$, we see that $Q$ is a polytope (i.e., $Q$ is bounded) if and only if $Q^* \in \calP$ is a polyhedron containing the origin in its interior (see \cite[Corollary 9.1a (iii)]{Sch}). 

We now define the two classes $\calP_N \subseteq \calP$ and $\calQ_M \subseteq \calQ$ as follows: 
\begin{itemize}
\item $\calQ_M$ is the set of full-dimensional rational polytopes in $M_\RR$ containing the origin. 
\item $\calP_N=\{Q^* \mid Q \in \calQ_M\}$. 
\end{itemize}

We remark that every polyhedron $P \in \calP_N$ contains the origin in its interior, because $P^*\in\calQ_M$. 
It is known (see \cite[Section 8.9]{Sch}) that any $P \in \calP_N$ has the unique decomposition $P=P^\prime+C$, 
where $P'$ is a rational polytope, $C$ is a rational polyhedral pointed cone and $+$ stands for the Minkowski sum. 
Here, a \emph{polyhedral cone} $C$ in $N_\RR$ (resp. $M_\RR$) is the cone $\Cone(V)$ generated by some finite set $V \subset N_\RR$ (resp. $V \subset M_\RR$) 
and $C$ is said to be \textit{rational} if $V \subset N$ (resp. $V \subset M$). 
A polyhedral cone $C$ is said to be \textit{pointed} if $C$ contains no $1$-dimensional linear subspace. 
Note that the pointedness follows from the full-dimensional condition for $\calQ_M$ (\cite[Corollary 9.1a (i)]{Sch}). 

\subsection{Combinatorial mutations in $N$} 

First, we prepare some notions used in the definition of the combinatorial mutations. 

We say that $w \in M$ is \emph{primitive} if $\Conv(\{{\bf 0},w\})$ contains no lattice point except for ${\bf 0}$ and $w$. 
Let $w\in M$ be a primitive lattice vector and consider the linear map $\langle w,-\rangle:N_\RR\rightarrow \RR$ determined by $w\in M$. 
We say that a lattice point $u\in N$ (resp., a subset $F \subset N_\RR$) is at \emph{height} $m$ with respect to $w$ if $\langle w,u\rangle=m$ (resp., $\langle w,u\rangle=m$ for any $u\in F$). 
Let $H_{w,h}=\{ u \in N_\RR \mid \langle w,u \rangle = h\}$ denote the hyperplane defined by $w$ with height $h$. 
We use the notation $w^\perp$ instead of $H_{w,0}$. 


We now define the \emph{combinatorial mutation} of $P \in \calP_N$. 
As mentioned above, $P$ can be decomposed as $P = P'+C$, where $P'$ is a rational polytope and $C$ is a rational polyhedral pointed cone. 
In what follows, we define the combinatorial mutations of a rational polytope $P'$ and a rational polyhedral pointed cone $C$. 
Then we define the combinatorial mutation of $P$ as the sum of those of $P'$ and $C$. 

\begin{definition}[{cf. \cite[Definition 5]{ACGK}, \cite[Section 2]{Hig}}]\label{def_mutation_polygonN}
Work with the same notation as above. 
Take $w \in M$ and a lattice polytope $F\subset N_\RR$ such that $F \subseteq w^\perp$. We call $F$ a \emph{factor}. 
\begin{itemize}
\item We assume that for each negative height $h<0$ there exists a possibly empty rational polytope $G_h \subset N_\RR$ satisfying 
\begin{equation}\label{condition_Gh_polytope}
\calV(P') \cap H_{w,h} \subseteq G_h+(-h)F\subseteq P' \cap H_{w,h}. 
\end{equation}
(We use the convention $A+\varnothing=\varnothing$ for any subset $A$.) 
Note that we may set $G_h=\emptyset$ if $\calV(P')  \cap H_{w,h}=\emptyset$. Since $\calV(P')$ is finite, we may consider only finitely many $h$'s. 
We define the \emph{combinatorial mutation} of $P'$ with respect to the vector $w$ and a factor $F$ as 
$$
\mut_w(P',F)\coloneqq\Conv\left(\bigcup_{h \leq -1}G_h \cup \bigcup_{h \geq 0}(P' \cap H_{w,h}+hF)\right), 
$$
which is actually a polytope since we may only consider finitely many $h$'s. 
\item Let $C$ be a rational polyhedral cone and let $V \subset N$ be a generator of $C$, i.e., $C=\Cone(V)$. 
Similarly to the above, we assume that for each negative height $h<0$, there exists a rational polyhedral cone $G_h\subset N_{\RR}$ satisfying 
\begin{equation}\label{condition_Gh_cone}
V  \cap H_{w,h} \subseteq G_h+(-h)F\subseteq C \cap H_{w,h}. 
\end{equation}
We define the \emph{combinatorial mutation} of $C$ with respect to the vector $w$ and a factor $F$ as 
$$
\mut_w(C,F)\coloneqq\Cone\left(\bigcup_{h \leq -1}G_h \cup \bigcup_{h \geq 0}(C \cap H_{w,h}+hF)\right), 
$$
which is actually a rational polyhedral cone. 
\item We say that $(P,w,F)=(P'+C,w,F)$ is \emph{well-defined} if $(P',w,F)$ satisfies \eqref{condition_Gh_polytope} and $(C,w,F)$ satisfies \eqref{condition_Gh_cone}. 
When $(P,w,F)$ is well-defined, we define the \emph{combinatorial mutation} of $P$ with respect to the vector $w$ and a factor $F$ as 
$$
\mut_w(P,F)\coloneqq\mut_w(P',F)+\mut_w(C,F). 
$$
\end{itemize}
\end{definition}
We remark that the mutation is independent of the choice of $\{G_h\}$ (see \cite[Proposition~1]{ACGK}), thus we omit $G_h$ from the notation. 
A translation of the factor $F$ does not affect the mutation, because 
for any $u\in N$ with $\langle w,u\rangle=0$ we have $\mut_w(P,F)\cong\mut_w(P,u+F)$, see \cite{ACGK} for more details. 
Also, the combinatorial mutation is an involution in the following sense; 
if $Q\coloneqq\mut_w(P,F)$, then we have $P=\mut_{(-w)}(Q,F)$ (see \cite[Lemma~2]{ACGK}). 

\subsection{Combinatorial mutations in $M$} 

We can consider the combinatorial mutation of a lattice polytope $P$ in terms of the polar dual $P^*$ of $P$. 

\begin{definition}[{cf. \cite[Section~3]{ACGK}}]
\label{def_mutation_polygonM}
Let the notation be the same as above. 
In particular, $w\in M$ is a primitive lattice vector, and $F$ is a lattice polytope such that $\langle w,u\rangle=0$ for any $u\in F$. 
We define the piecewise-linear map $\varphi_{w,F} : M_\RR\rightarrow M_\RR$ as 
\begin{equation}
\label{eq_piecewise-linear}
\varphi_{w,F}(v)\coloneqq v-v_{\rm min}w, \quad  
\text{where $v_{\min}\coloneqq \min\{\langle v, u\rangle\mid u\in F\}$.} 
\end{equation}
For a polytope $Q\subset M_\RR$, we say that $\varphi_{w,F}(Q)$ is the \emph{combinatorial mutation} of $Q$ 
with respect to $w$ and $F$ if $\varphi_{w,F}(Q)$ is convex. 
\end{definition}

We note that $\varphi_{-w,F} \circ \varphi_{w,F}$ is the identity, i.e., $\varphi_{w,F}^{-1}=\varphi_{-w,F}$, 
Also, the image $\varphi_{w,F}(Q)$ might not be convex in general since $\varphi_{w,F}$ is a piecewise-linear map. 

The two kinds of combinatorial mutations given in Definitions~\ref{def_mutation_polygonN} and \ref{def_mutation_polygonM} 
relate to each other in the following sense. 

\begin{proposition}[{e.g., \cite{ACGK}, \cite[Propositions~3.2 and 3.4]{Hig}, \cite[Proposition~6.6]{HN}}]
We fix a primitive vector $w \in M$ and a lattice polytope $F \subseteq w^\perp$. Let $Q \in \calQ_M$. 
Then $\mut_w(Q^*,F)$ is well-defined if and only if $\varphi_{w,F}(Q)$ is convex, in which case we have $\varphi_{w,F}(Q)=\mut_w(Q^*,F)^*$. 
\end{proposition}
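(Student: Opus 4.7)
The strategy is to pass to the dual side. Setting $P = Q^*$ so that $P^* = Q$ by double polarity, it suffices to show that $\mut_w(P,F)$ is well-defined if and only if $\varphi_{w,F}(P^*)$ is convex, with $\varphi_{w,F}(P^*) = \mut_w(P,F)^*$ in that case. The guiding principle is that $\mut_w$ slices $P$ by $w$-height $h = \langle w,u\rangle$ and translates each slice by $\pm hF$; under polarity, such slice-wise translations on the $N$ side dualize to the piecewise-linear shift $v \mapsto v - v_{\min} w$ defining $\varphi_{w,F}$, where $v_{\min} = \min_{u\in F}\langle v,u\rangle$.

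First I would compute the support function of $\mut_w(P,F)$. Since $F\subseteq w^\perp$, for any $v \in M_\RR$, $u \in P \cap H_{w,h}$, and $f\in F$ we have
\[
\langle \varphi_{w,F}(v),\,u+hf\rangle \;=\; \langle v,u\rangle + h\bigl(\langle v,f\rangle - v_{\min}\bigr),
\]
and the second term is nonnegative for $h\geq 0$. Hence, minimizing over each slice at height $h\geq 0$ yields
\[
\min\{\langle \varphi_{w,F}(v),u'\rangle : u'\in P\cap H_{w,h}+hF\} \;=\; \min\{\langle v,u\rangle : u\in P\cap H_{w,h}\}.
\]
For $h<0$, the sandwich conditions \eqref{condition_Gh_polytope} and \eqref{condition_Gh_cone} force
\[
\min\{\langle v,u'\rangle : u'\in G_h\} \;=\; \min\{\langle v,u\rangle : u\in P\cap H_{w,h}\} + h v_{\min},
\]
a quantity independent of the choice of $G_h$ (this is essentially the content of \cite[Proposition~1]{ACGK}), and again the correction $-v_{\min}w$ in $\varphi_{w,F}(v)$ cancels the discrepancy. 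Summing across heights gives
\[
\min\{\langle \varphi_{w,F}(v),u'\rangle : u'\in \mut_w(P,F)\} \;=\; \min\{\langle v,u\rangle : u\in P\},
\]
so facet-defining inequalities $\langle v,u\rangle \geq -1$ of $P$ correspond, under $v\mapsto \varphi_{w,F}(v)$, to facet-defining inequalities of $\mut_w(P,F)$. An analogous argument on the recession cone $C$ treats the unbounded directions of $P^*$. Combined with the involutive property $\varphi_{-w,F}\circ\varphi_{w,F}=\mathrm{id}$, this identifies $\varphi_{w,F}(P^*)$ with $\mut_w(P,F)^*$ once both sides make sense.

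The remaining step, which I expect to be the main obstacle, is the equivalence of well-definedness and convexity. The easy direction is: if $\mut_w(P,F)$ is well-defined, then it lies in $\calP_N$, its polar is convex, and by the equality just derived so is $\varphi_{w,F}(P^*)$. Conversely, if $\varphi_{w,F}(P^*)$ is convex, then $\varphi_{w,F}(P^*)^*$ is a natural candidate for $\mut_w(P,F)$, and one must produce admissible $G_h$'s realizing this candidate — the natural choice being $G_h := \varphi_{w,F}(P^*)^* \cap H_{w,h}$ for $h<0$. Verifying the sandwich condition for these $G_h$'s amounts to reading the dual inequalities back on the $N$ side, and the delicate point is handling the polytope part $P'$ and the recession cone $C$ coherently while ruling out pathologies at heights where slices degenerate or become empty. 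This is where convexity of $\varphi_{w,F}(P^*)$ is crucial: a failure of convexity would produce a height at which no choice of $G_h$ can simultaneously sit above $\calV(P')\cap H_{w,h}$ and below $P'\cap H_{w,h}$ in the required Minkowski-sum sense.
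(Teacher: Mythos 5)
A preliminary remark: the paper itself states this proposition without proof, quoting \cite{ACGK}, \cite[Propositions~3.2 and 3.4]{Hig} and \cite[Proposition~6.6]{HN}, so your attempt can only be measured against those proofs; your overall strategy (matching support functions of $\mut_w(Q^*,F)$, slice by slice in the $w$-height, with the piecewise-linear map $\varphi_{w,F}$) is indeed the strategy used there, and your computation at heights $h\geq 0$ is correct. However, the slice-wise identity you claim at negative heights is false: condition \eqref{condition_Gh_polytope} only forces the two inequalities $\min\{\langle v,u\rangle \mid u\in P'\cap H_{w,h}\}+hv_{\min}\leq \min\{\langle v,u'\rangle \mid u'\in G_h\}\leq \min\{\langle v,u\rangle \mid u\in \calV(P')\cap H_{w,h}\}+hv_{\min}$, and the outer terms need not agree; for instance if $\calV(P')\cap H_{w,h}=\emptyset$ one may take $G_h=\emptyset$, so your left-hand side is $+\infty$ while the slice $P'\cap H_{w,h}$ can be nonempty. (This is also not what \cite[Proposition~1]{ACGK} asserts; that proposition is the independence of the mutation from the choice of $\{G_h\}$, again a global statement.) The identity you want, $\min\{\langle\varphi_{w,F}(v),u'\rangle \mid u'\in\mut_w(P,F)\}=\min\{\langle v,u\rangle \mid u\in P\}$, is nevertheless true, but the argument must be global rather than slice-wise: the minimum of $\langle v,\cdot\rangle$ over $P'$ is attained at a vertex $u_0$, the piece of the mutation at the height of $u_0$ realizes exactly the value $\langle v,u_0\rangle$ (this uses both inclusions of \eqref{condition_Gh_polytope}), every other piece is bounded below by that value, and the cone $C$ is handled analogously via a generating set and the dichotomy $\min_C\langle v,\cdot\rangle\in\{0,-\infty\}$. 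This error is repairable, so the forward direction and the equality of polars can be salvaged.

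The genuine gap is the converse implication. After the repair, what you prove is: well-definedness implies the support-function identity, hence $\varphi_{w,F}(Q)=\mut_w(Q^*,F)^*$ is a polar and in particular convex. The implication ``$\varphi_{w,F}(Q)$ convex $\Rightarrow$ $\mut_w(Q^*,F)$ well-defined'' is only announced: you name the candidate $G_h=\varphi_{w,F}(Q)^*\cap H_{w,h}$, say the verification of \eqref{condition_Gh_polytope} and \eqref{condition_Gh_cone} is delicate, and stop. Note that this naive candidate does not even have the right shape, because the two conditions must be checked separately for the polytope part $P'$ and the cone part $C$ of the decomposition $Q^*=P'+C$, not for $Q^*$ as a whole, and slices of $\varphi_{w,F}(Q)^*$ at negative heights may themselves be unbounded. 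Moreover, your closing sentence argues ``not convex $\Rightarrow$ not well-defined,'' which is the contrapositive of the direction you already have, not of the missing one. That missing construction and verification of admissible $\{G_h\}$ from convexity is precisely the substance of \cite[Propositions~3.2 and 3.4]{Hig} and \cite[Proposition~6.6]{HN}; as written, your proposal establishes one implication together with the equality, but not the stated equivalence.
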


We also remark that $\varphi_{w,F}$ preserves the Ehrhart series of $Q$ (cf. \cite[Proposition~4]{ACGK}). 
It is known that the leading coefficient of the Ehrhart series of $Q$ is the volume of $Q$, 
thus the volume of $Q$ coincides with that of $\varphi_{w,F}(Q)$. 

\section{Interpretation of tropicalized cluster mutations by combinatorial mutations} 
\label{sec_tropical_mutation}

We recall that we can obtain the Newton--Okounkov polytope $\Delta_G$ from a plabic graph $G$ of type $\pi_{k,n}$, 
see Definition~\ref{def_NObody}. 
Applying the operation (M1) in Definition~\ref{def_moves_plabic}, 
we have another plabic graph $G^\prime$ of type $\pi_{k,n}$ and the associated Newton--Okounkov polytope $\Delta_{G^\prime}$. 
Thus, it is natural to consider the relationship between $\Delta_G$ and $\Delta_{G^\prime}$. 
The desired answer was given in \cite[Section~11]{RW}, that is, they are related by the \emph{tropicalized cluster mutation} 
(see Definition~\ref{def_trop_mutation} below), 
which is obtained by tropicalizing the exchange relation used for defining a cluster algebra. 
In this section, we review this result from a viewpoint of combinatorial mutations. 
First, we introduce the tropicalized cluster mutation as follows, which is a special version of \cite[Definition~11.8]{RW}. 


\begin{definition}
\label{def_trop_mutation}
Let $G$ be a plabic graph of type $\pi_{k,n}$. 
In particular, the faces of $G$ are identified with $\widetilde{\calP}_G\eqqcolon \{\lambda_1, \dots, \lambda_s, \lambda_{s+1}=\varnothing\}$ 
where $s=k(n-k)$. 
We suppose that $\lambda_i\in\calP_G$ is a square face. 
Let $G^\prime$ be a plabic graph of type $\pi_{k,n}$ obtained by applying the square move (M1) to the face $\lambda_i$. 
We consider the piecewise-linear map $\Psi_{G,G^\prime}: \RR^{\calP_G}\rightarrow \RR^{\calP_{G^\prime}}$ defined by 
\begin{equation}
\label{eq_tropical_mutation1}
(v_{\lambda_1},\dots, v_{\lambda_{s}}) \mapsto 
(v_{\lambda_1},\dots, v_{\lambda_{i-1}}, v_{\lambda_i}^\prime, v_{\lambda_{i+1}},\dots, v_{\lambda_{s}}), 
\end{equation}
where 
\[
v_{\lambda_i}^\prime=-v_{\lambda_i}+\min\{v_{\lambda_a}+v_{\lambda_c}, v_{\lambda_b}+v_{\lambda_d}\}, 
\]
and $\lambda_a, \lambda_b, \lambda_c, \lambda_d$ are faces located cyclically around $\lambda_i$. 
If one of $\lambda_a, \lambda_b, \lambda_c, \lambda_d$ is the empty diagram, e.g. $\lambda_a=\varnothing$, then we define $v_{\lambda_a}=0$. 
\begin{center}
\scalebox{1}{
\begin{tikzpicture}
\newcommand{\noderad}{0.1cm} 
\newcommand{\nodewidth}{0.03cm} 
\newcommand{\edgewidth}{0.03cm} 

\foreach \n/\a/\b in {A/0/0, B/1/0, C/1/1, D/0/1} {
\coordinate (\n) at (\a,\b); } 
\foreach \n/\a in {A/225, B/315, C/45, D/135} {
\path (\n) ++(\a:0.7) coordinate (\n+); }

\foreach \s/\t in {A/B, B/C, C/D, D/A, A/A+, B/B+, C/C+, D/D+}{
\draw[line width=\edgewidth] (\s)--(\t); }

\filldraw [fill=black, line width=\nodewidth] (A) circle [radius=\noderad] ;
\filldraw [fill=white, line width=\nodewidth] (B) circle [radius=\noderad] ; 
\filldraw [fill=black, line width=\nodewidth] (C) circle [radius=\noderad] ;
\filldraw [fill=white, line width=\nodewidth] (D) circle [radius=\noderad] ; 

\node[blue] (V) at (0.5,0.5) {\small $\lambda_i$} ;
\node[blue] (V1) at (1.5,0.5) {\small $\lambda_a$} ;\node[blue] (V2) at (0.5,1.5) {\small $\lambda_b$} ;
\node[blue] (V3) at (-0.5,0.5) {\small $\lambda_c$} ;\node[blue] (V4) at (0.5,-0.5) {\small $\lambda_d$} ;
\end{tikzpicture}
}
\end{center}
We call the map $\Psi_{G,G^\prime}$ the \emph{tropicalized cluster mutation} at $\lambda_i$. 
\end{definition}

By \cite[Theorem~13.1]{RW}, the map $\Psi_{G,G^\prime}$ satisfies 
\[\val_{G^\prime}(\overline{p}_J)=\Psi_{G,G^\prime}\big(\val_{G}(\overline{p}_J)\big)\]
for any Pl\"{u}cker coordinate $\overline{p}_J$ in $A_{k,n}$. 
Even though the polytope $\Conv_G$ is defined as the convex hull of these valuations (see Theorem~\ref{thm_NO=C}), 
the image $\Psi_{G,G^\prime}(\Conv_G)$ does not necessarily coincide with $\Conv_{G^\prime}$ (cf \cite[Remark~13.3]{RW}). 
Meanwhile, the tropicalized cluster mutation relates the Newton--Okounkov polytopes $\Delta_G$ and $\Delta_{G^\prime}$. 

\begin{theorem}[{\cite[Corollaries~11.16, 11.17, Theorem~16.18]{RW}}]
\label{thm_trop_map_NO}
Let the notation be the same as {\rm Definition~\ref{def_trop_mutation}}. 
The tropicalized cluster mutation $\Psi_{G,G^\prime}$ gives a bijection 
\[
\Psi_{G,G^\prime}: \Delta_G \rightarrow \Delta_{G^\prime}. 
\]
In particular, the numbers of lattice points of $\Delta_G$ and $\Delta_{G^\prime}$ are the same. 
\end{theorem}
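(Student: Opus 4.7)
The plan is to work with the superpotential polytope $\Gamma_G$ rather than $\Delta_G$ directly, using the identification $\Delta_G = \Gamma_G$ of \cite[Theorem~16.18]{RW}. Since $\Gamma_G \subseteq \RR^{\calP_G}$ is cut out by the inequalities obtained by tropicalizing the Laurent monomials appearing in an expansion $W^{(G)}$ of the superpotential $W$ in the $\calX$-cluster variables $\{x_\lambda \mid \lambda \in \calP_G\}$ associated to $G$, the goal becomes purely combinatorial: to show that $\Psi_{G,G^\prime}$ bijects $\Gamma_G$ onto $\Gamma_{G^\prime}$, and then to read off the lattice-point statement.

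First, I would identify how the Laurent expansion $W^{(G)}$ changes under the square move at $\lambda_i$. The $\calX$-cluster variable $x_{\lambda_i}$ mutates according to the standard cluster $\calX$-exchange relation determined by the arrows incident to $\lambda_i$ in $Q(G)$; since $W$ is a fixed regular function on the mirror-dual Grassmannian $\check{\XX}_{k,n}$, substituting this exchange relation into $W^{(G)}$ should yield $W^{(G^\prime)}$. Establishing this compatibility between the mirror superpotential and the cluster $\calX$-structure coming from plabic graphs is the technical heart of the argument, and I expect it to be the main obstacle; it is exactly what \cite[Corollaries~11.16 and~11.17]{RW} provide, and without it there is no way to relate the defining inequalities of $\Gamma_G$ and $\Gamma_{G^\prime}$.

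Granting the previous step, I would tropicalize it. Under the min-plus tropicalization the substitution rule
\[
x_{\lambda_i} \;\longmapsto\; x_{\lambda_i}^{-1}\Bigl(\prod_{\lambda_j \to \lambda_i} x_{\lambda_j} + \prod_{\lambda_i \to \lambda_j} x_{\lambda_j}\Bigr)
\]
becomes the piecewise-linear rule $v_{\lambda_i}^\prime = -v_{\lambda_i} + \min\bigl(\sum_{\mathrm{in}} v_{\lambda_j},\, \sum_{\mathrm{out}} v_{\lambda_j}\bigr)$, which at the square face $\lambda_i$ is precisely the formula of Definition~\ref{def_trop_mutation}. Because the defining half-space inequalities of $\Gamma_{G^\prime}$ are tropicalizations of the monomials of $W^{(G^\prime)}$, and these monomials are the images under the substitution of the monomials of $W^{(G)}$, the inequalities for $\Gamma_{G^\prime}$ are precisely the pullbacks along $\Psi_{G,G^\prime}$ of the inequalities for $\Gamma_G$, giving $v \in \Gamma_G \iff \Psi_{G,G^\prime}(v) \in \Gamma_{G^\prime}$.

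Finally, bijectivity follows by symmetry: the square move is an involution, and a direct check using the symmetry of $\min(\cdot,\cdot)$ in its two arguments shows $\Psi_{G^\prime,G} \circ \Psi_{G,G^\prime} = \mathrm{id}$. Hence $\Psi_{G,G^\prime}$ gives a bijection $\Delta_G \to \Delta_{G^\prime}$. The lattice-point assertion is then immediate from the form of the map: $\Psi_{G,G^\prime}$ is piecewise-linear with integer coefficients, so it maps $\ZZ^{\calP_G}$ into $\ZZ^{\calP_{G^\prime}}$ on each linearity chamber, and the same is true of its inverse $\Psi_{G^\prime,G}$, so the restriction of $\Psi_{G,G^\prime}$ to $\Delta_G \cap \ZZ^{\calP_G}$ is a bijection onto $\Delta_{G^\prime} \cap \ZZ^{\calP_{G^\prime}}$.
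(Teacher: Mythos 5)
The paper itself gives no proof of Theorem~\ref{thm_trop_map_NO} --- it is quoted directly from \cite{RW} --- and your proposal reconstructs precisely the argument of that source: identify $\Delta_G$ with the superpotential polytope $\Gamma_G$ via \cite[Theorem~16.18]{RW}, use that the cluster expansions of $W$ transform under the subtraction-free exchange relation so that (min-convention) tropicalization carries the defining inequalities of $\Gamma_G$ to those of $\Gamma_{G^\prime}$, which is the content of \cite[Corollaries~11.16 and 11.17]{RW}, and finish with the involution and integrality observations. Your outline is correct and takes essentially the same route the paper relies on, including your accurate assessment that the compatibility of the superpotential with the $\calX$-cluster structure is the technical core that must be imported from \cite{RW}.
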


We now reinterpret the map $\Psi_{G,G^\prime}$ in terms of combinatorial mutations. 
We note that a similar observation was already given in \cite{Cas} for the Newton polytope of the superpotential $W$, 
which is the polar dual of the Newton--Okounkov polytope. 
Thus, the following proposition is the dual of \cite[Proposition~3]{Cas}. 

\begin{proposition}
\label{prop_tropical=combmutation}
Let the notation be the same as {\rm Definition~\ref{def_trop_mutation}}. 
We define the lattice polytope $$F_{\lambda_i}\coloneqq\Conv\{\bfe_{\lambda_a}+\bfe_{\lambda_c}, \bfe_{\lambda_b}+\bfe_{\lambda_d}\},$$
where $\bfe_{\lambda_j}$ is the unit vector in $\RR^{\calP_G}$. Let $w\coloneqq -\bfe_{\lambda_i}$. 
Also, we define the linear map 
\[
\epsilon_{\lambda_i}: \RR^{\calP_G} \rightarrow \RR^{\calP_G} \quad
\Big(\hspace{3pt} (v_{\lambda_1},\dots, v_{\lambda_{s}}) \mapsto 
(v_{\lambda_1},\dots, v_{\lambda_{i-1}}, -v_{\lambda_i}, v_{\lambda_{i+1}},\dots, v_{\lambda_{s}}) \hspace{3pt} \Big), 
\]
which is clearly a unimodular transformation. 
Then, we have 
\begin{equation*}
\Psi_{G,G^\prime}=\varphi_{w, F_{\lambda_i}}\circ \epsilon_{\lambda_i}. 
\end{equation*}
Therefore, the Newton--Okounkov polytope $\Delta_{G^\prime}$ is the combinatorial mutation of $\epsilon_{\lambda_i}(\Delta_G)$ 
with respect to $w$ and $F_{\lambda_i}$. 
\end{proposition}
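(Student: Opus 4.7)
The plan is to verify the identity $\Psi_{G,G'} = \varphi_{w,F_{\lambda_i}} \circ \epsilon_{\lambda_i}$ by a direct coordinate-wise computation, and then extract the geometric statement from Theorem~\ref{thm_trop_map_NO}. The whole argument is essentially one careful unpacking of definitions, so I would not expect any serious obstacle, just a couple of bookkeeping points.

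First I would fix a vector $v = (v_{\lambda_1},\dots,v_{\lambda_s}) \in \RR^{\calP_G}$ and write $v' \coloneqq \epsilon_{\lambda_i}(v)$, so that $v'_{\lambda_i} = -v_{\lambda_i}$ while $v'_{\lambda_j} = v_{\lambda_j}$ for $j \ne i$. Since $F_{\lambda_i}$ is the convex hull of two lattice points and the functional $\langle v', - \rangle$ is linear, the minimum appearing in \eqref{eq_piecewise-linear} is attained at one of the two generators of $F_{\lambda_i}$, giving
\[
v'_{\min} = \min\bigl\{\langle v', \bfe_{\lambda_a}+\bfe_{\lambda_c}\rangle,\ \langle v', \bfe_{\lambda_b}+\bfe_{\lambda_d}\rangle\bigr\} = \min\bigl\{v_{\lambda_a}+v_{\lambda_c},\ v_{\lambda_b}+v_{\lambda_d}\bigr\}.
\]
Crucially, the four faces $\lambda_a,\lambda_b,\lambda_c,\lambda_d$ surrounding $\lambda_i$ are distinct from $\lambda_i$ itself, so the sign change introduced by $\epsilon_{\lambda_i}$ does not intervene here; and if some of these faces equal $\varnothing$, the corresponding coordinates are simply absent from $F_{\lambda_i}$, which matches the convention $v_\varnothing = 0$ in Definition~\ref{def_trop_mutation}.

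Next I would apply $\varphi_{w,F_{\lambda_i}}$ with $w = -\bfe_{\lambda_i}$: by definition this sends $v'$ to $v' - v'_{\min} w = v' + v'_{\min}\bfe_{\lambda_i}$. Coordinate by coordinate, every entry other than the $\lambda_i$-th is unchanged from $v'$, and hence from $v$; the $\lambda_i$-th entry becomes
\[
-v_{\lambda_i} + \min\bigl\{v_{\lambda_a}+v_{\lambda_c},\ v_{\lambda_b}+v_{\lambda_d}\bigr\},
\]
which is exactly the formula for $v'_{\lambda_i}$ in \eqref{eq_tropical_mutation1}. This establishes the equality $\Psi_{G,G'} = \varphi_{w,F_{\lambda_i}} \circ \epsilon_{\lambda_i}$ of piecewise-linear maps on $\RR^{\calP_G}$.

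Finally, for the geometric conclusion, I would invoke Theorem~\ref{thm_trop_map_NO} to get $\Psi_{G,G'}(\Delta_G) = \Delta_{G'}$. Applying the identity just proved, this yields
\[
\varphi_{w,F_{\lambda_i}}\bigl(\epsilon_{\lambda_i}(\Delta_G)\bigr) = \Delta_{G'},
\]
which is in particular convex. By Definition~\ref{def_mutation_polygonM}, convexity of the image is the exact condition under which $\varphi_{w,F_{\lambda_i}}\bigl(\epsilon_{\lambda_i}(\Delta_G)\bigr)$ qualifies as the combinatorial mutation of the polytope $\epsilon_{\lambda_i}(\Delta_G)$ with respect to $w$ and the factor $F_{\lambda_i}$. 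The only mildly delicate point to double-check is the conventions around $\varnothing$ (and the identification of coordinates on $\RR^{\calP_G}$ versus $\RR^{\calP_{G'}}$, which agree since the square move leaves the non-mutated face labels intact); beyond that, the argument is a mechanical verification.
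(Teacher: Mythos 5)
Your proposal is correct and follows essentially the same route as the paper's proof: both verify the identity $\Psi_{G,G^\prime}=\varphi_{w,F_{\lambda_i}}\circ\epsilon_{\lambda_i}$ by computing $v_{\min}$ at the two generators of $F_{\lambda_i}$ (using that $\lambda_a,\lambda_b,\lambda_c,\lambda_d\neq\lambda_i$, so $F_{\lambda_i}\subset w^\perp$ and the sign flip is invisible to the minimum), and then invoke Theorem~\ref{thm_trop_map_NO} to conclude that the image $\Delta_{G^\prime}$ is a polytope, which is exactly the convexity condition making $\varphi_{w,F_{\lambda_i}}\bigl(\epsilon_{\lambda_i}(\Delta_G)\bigr)$ a combinatorial mutation. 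Your extra remarks on the $\varnothing$ convention and the identification $\RR^{\calP_G}\cong\RR^{\calP_{G^\prime}}$ are harmless elaborations of the same argument.
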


\begin{proof}
By definition, we see that $\langle w,u\rangle=0$ for any $u\in F_{\lambda_i}$. 
Thus, we can define the piecewise-linear map \eqref{eq_piecewise-linear} for $w=-\bfe_{\lambda_i}$ and $F_{\lambda_i}$. 
For $v=(v_{\lambda_1},\dots, v_{\lambda_{s}})\in\RR^{\calP_G}$, we have 
\[
v_{\min}\coloneqq \min\{\langle v, u\rangle\mid u\in F_{\lambda_i}\}=\min\{v_{\lambda_a}+v_{\lambda_c}, v_{\lambda_b}+v_{\lambda_d}\}.
\] 
Thus, we have 
\begin{equation}
\label{eq_proof_tropical}
\varphi_{w,F_{\lambda_i}}=v-v_{\min}w=v+\min\{v_{\lambda_a}+v_{\lambda_c}, v_{\lambda_b}+v_{\lambda_d}\}\bfe_{\lambda_i}. 
\end{equation}
Since the coordinate of \eqref{eq_proof_tropical} labeled by $\lambda_i$ is 
$v_{\lambda_i}+\min\{v_{\lambda_a}+v_{\lambda_c}, v_{\lambda_b}+v_{\lambda_d}\}$, 
we see that $\Psi_{G,G^\prime}=\varphi_{w, F_{\lambda_i}}\circ \epsilon_{\lambda_i}$ by definition.  
By Theorem~\ref{thm_trop_map_NO}, we know that 
$\Delta_{G^\prime}=\Psi_{G,G^\prime}(\Delta_G)=\varphi_{w, F_{\lambda_i}}\circ \epsilon_{\lambda_i}(\Delta_G)$ is a polytope, 
and hence $\Delta_{G^\prime}$ is the combinatorial mutation of $\epsilon_{\lambda_i}(\Delta_G)$. 
\end{proof}

\begin{remark}
\label{rem_tropicalized_mutation}
The tropicalized cluster mutation is defined for a general labeled seed mutation-equivalent to 
the one associated to a plabic graph of type $\pi_{k,n}$ as in \cite[Definition~11.8]{RW}, 
and Theorem~\ref{thm_trop_map_NO} holds for such a general labeled seed. 
Thus, we can extend Proposition~\ref{prop_tropical=combmutation} to such a general situation, 
see also Appendix~\ref{appendix_Gr(3,6)} for an example. 
\end{remark}

\begin{example}
\ytableausetup{boxsize=4.5pt}
We recall the plabic graphs given in Example~\ref{ex_plabic_mutation}. 
Their faces are labeled by Young diagrams as in the figure below. 
We denote the left plabic graph by $G$, and the right one by $G^\prime$. 
As we saw, applying the square move to the face of $G$ labeled by \ydiagram{2}, we have the plabic graph $G^\prime$, 
in which case the tropicalized cluster mutation $\Psi_{G,G^\prime}\colon\RR^8\rightarrow\RR^8$ is the piecewise-linear map given by 
\[
\ytableausetup{boxsize=3.5pt}
v_{\ydiagram{2}}\mapsto v_{\ydiagram{3,2}}\coloneqq -v_{\ydiagram{2}}+\min\{v_{\ydiagram{1}}+v_{\ydiagram{3,3}},v_{\ydiagram{2,2}}+v_{\ydiagram{3}}\}
\]
and other components are preserved. 
The map $\Psi_{G,G^\prime}$ satisfies $\val_{G^\prime}(\overline{p}_J)=\Psi_{G,G^\prime}\big(\val_{G}(\overline{p}_J)\big)$ 
for any Pl\"{u}cker coordinate $\overline{p}_J$ in $A_{2,6}$, and gives a bijection $\Psi_{G,G^\prime}: \Delta_G \rightarrow \Delta_{G^\prime}$. 

\ytableausetup{boxsize=4.5pt}
\begin{center}
\scalebox{0.8}{
\begin{tikzpicture}
\newcommand{\boundaryrad}{2cm} 
\newcommand{\noderad}{0.13cm} 
\newcommand{\nodewidth}{0.035cm} 
\newcommand{\edgewidth}{0.035cm} 
\newcommand{\boundarylabel}{8pt} 

\node at (0,0){
\begin{tikzpicture}
\draw (0,0) circle(\boundaryrad) [gray, line width=\edgewidth];
\foreach \n/\a in {1/60, 2/0, 3/300, 4/240, 5/180, 6/120} {
\coordinate (B\n) at (\a:\boundaryrad); 
\coordinate (B\n+) at (\a:\boundaryrad+\boundarylabel); };
\foreach \n in {1,...,6} { \draw (B\n+) node {\small $\n$}; }
\foreach \n/\a/\r in {7/60/0.65, 8/0/0.65, 9/310/0.65, 10/270/0.5, 11/240/0.7, 12/200/0.4, 13/170/0.65, 14/120/0.65} {
\coordinate (B\n) at (\a:\r*\boundaryrad); }; 
\foreach \s/\t in {7/8, 8/9, 9/10, 10/11, 11/12, 12/13, 13/14, 14/7, 7/10, 7/12, 
1/7, 2/8, 3/9, 4/11, 5/13, 6/14} {
\draw[line width=\edgewidth] (B\s)--(B\t); };
\foreach \x in {7,9,11,13} {
\filldraw [fill=white, line width=\edgewidth] (B\x) circle [radius=\noderad] ;}; 
\foreach \x in {8,10,12,14} {
\filldraw [fill=black, line width=\edgewidth] (B\x) circle [radius=\noderad] ; };
\foreach \n/\a/\r in {12/335/0.8, 23/270/0.8, 34/210/0.8, 45/150/0.8, 56/90/0.8, 16/30/0.78} {
\coordinate (V\n) at (\a:\r*\boundaryrad); }; 
\node[blue] at (V12) {\ydiagram{4,4}}; 
\node[blue] at (V23) {\ydiagram{3,3}}; 
\node[blue] at (V34) {\ydiagram{2,2}}; 
\node[blue] at (V45) {\ydiagram{1,1}}; 
\node[blue] at (V56) {$\varnothing$};
\node[blue] at (V16) {\ydiagram{4}};  
\foreach \n/\a/\r in {26/-15/0.35, 36/240/0.2, 46/130/0.4} {
\coordinate (V\n) at (\a:\r*\boundaryrad); }; 
\node[blue] at (V26) {\ydiagram{3}}; 
\node[blue] at (V36) {\ydiagram{2}}; 
\node[blue] at (V46) {\ydiagram{1}}; 
\end{tikzpicture}};

\draw[->, line width=0.03cm] (3,0)--(5.5,0) node[midway,xshift=0cm,yshift=0.35cm] {the square move} 
node[midway,xshift=0cm,yshift=-0.35cm] {at \ydiagram{2}}; 

\node at (8.5,0){
\begin{tikzpicture}
\draw (0,0) circle(\boundaryrad) [gray, line width=\edgewidth];
\foreach \n/\a in {1/60, 2/0, 3/300, 4/240, 5/180, 6/120} {
\coordinate (B\n) at (\a:\boundaryrad); 
\coordinate (B\n+) at (\a:\boundaryrad+\boundarylabel); };
\foreach \n in {1,...,6} { \draw (B\n+) node {\small $\n$}; }
\foreach \n/\a/\r in {7/60/0.65, 8/0/0.65, 9/310/0.65, 10/240/0.65, 11/170/0.65, 12/120/0.65, 13/0/0} {
\coordinate (B\n) at (\a:\r*\boundaryrad); }; 
\foreach \s/\t in {7/8, 8/9, 9/10, 10/11, 11/12, 12/7, 7/13, 9/13, 11/13, 
1/7, 2/8, 3/9, 4/10, 5/11, 6/12} {
\draw[line width=\edgewidth] (B\s)--(B\t); };
\foreach \x in {7,9,11} {
\filldraw [fill=white, line width=\edgewidth] (B\x) circle [radius=\noderad] ;}; 
\foreach \x in {8,10,12,13} {
\filldraw [fill=black, line width=\edgewidth] (B\x) circle [radius=\noderad] ; };
\foreach \n/\a/\r in {12/335/0.8, 23/270/0.8, 34/210/0.8, 45/150/0.8, 56/90/0.8, 16/30/0.78} {
\coordinate (V\n) at (\a:\r*\boundaryrad); }; 
\node[blue] at (V12) {\ydiagram{4,4}}; 
\node[blue] at (V23) {\ydiagram{3,3}}; 
\node[blue] at (V34) {\ydiagram{2,2}}; 
\node[blue] at (V45) {\ydiagram{1,1}}; 
\node[blue] at (V56) {$\varnothing$};
\node[blue] at (V16) {\ydiagram{4}};  
\foreach \n/\a/\r in {26/0/0.35, 24/240/0.3, 46/120/0.35} {
\coordinate (V\n) at (\a:\r*\boundaryrad); }; 
\node[blue] at (V26) {\ydiagram{3}}; 
\node[blue] at (V24) {\ydiagram{3,2}}; 
\node[blue] at (V46) {\ydiagram{1}}; 
\end{tikzpicture}};

\end{tikzpicture}
}
\end{center}

\end{example}

\begin{example}
\label{ex_not_convex}
\ytableausetup{boxsize=4.5pt}
By Proposition~\ref{prop_tropical=combmutation}, the composite $\varphi_{w, F_{\lambda_i}}\circ \epsilon_{\lambda_i}$ 
gives a bijection from $\Delta_G$ to $\Delta_{G^\prime}$. 
If we drop $\epsilon_{\lambda_i}$ from this map, then the image $\varphi_{w, F_{\lambda_i}}(\Delta_G)$ is not a polytope in general. 
For example, let us recall Example~\ref{ex_NOpolytope_Gr(2,6)}. Let $\lambda_i=\raise0.15ex\hbox{\ydiagram{1}}$. 
Then $\lambda_a=\varnothing$, $\lambda_b=\raise0.6ex\hbox{\ydiagram{1,1}}$, $\lambda_c=\raise0.6ex\hbox{\ydiagram{2,2}}$ and 
$\lambda_d=\raise0.15ex\hbox{\ydiagram{2}}$. 
Let $\varphi = \varphi_{w,F_{\lambda_i}}$. Now, we apply $\varphi$ to the vertices of $\Delta_G$. Then their images become as follows: 
\begin{center}
\ytableausetup{boxsize=3.5pt}
\scalebox{0.9}{
{\renewcommand\arraystretch{1.1}\tabcolsep= 10pt
\begin{tabular}{c|cccccccc}
&\ydiagram{1}&\ydiagram{2}&\ydiagram{3}&\ydiagram{4}&
\raise0.5ex\hbox{\ydiagram{1,1}}&\raise0.5ex\hbox{\ydiagram{2,2}}&\raise0.5ex\hbox{\ydiagram{3,3}}&\raise0.5ex\hbox{\ydiagram{4,4}} \\ \hline
$\{1,2\}$&0&0&0&0&0&0&0&0 \\ \hline
$\{1,3\}$&0&0&0&0&0&0&0&1 \\ \hline
$\{1,4\}$&0&0&0&0&0&0&1&1 \\ \hline
$\{1,5\}$&0&0&0&0&0&1&1&1 \\ \hline
$\{1,6\}$&1&0&0&0&1&1&1&1 \\ \hline
$\{2,3\}$&0&0&0&1&0&0&0&1 \\ \hline
$\{2,4\}$&0&0&0&1&0&0&1&1 \\ \hline
$\{2,5\}$&0&0&0&1&0&1&1&1 \\ \hline
$\{2,6\}$&1&0&0&1&1&1&1&1 \\ \hline
$\{3,4\}$&0&0&1&1&0&0&1&2 \\ \hline
$\{3,5\}$&0&0&1&1&0&1&1&2 \\ \hline
$\{3,6\}$&1&0&1&1&1&1&1&2 \\ \hline
$\{4,5\}$&1&1&1&1&0&1&2&2 \\ \hline
$\{4,6\}$&1&1&1&1&1&1&2&2 \\ \hline
$\{5,6\}$&3&1&1&1&1&2&2&2 
\end{tabular}}}
\end{center}
Let $\Delta_{G'}$ be the convex hull of those points. By using {\tt Magma}, we can compute the volume of $\Delta_{G'}$, which is $28/8!$. 
On the other hand, the volume of $\Delta_G$ is $14/8!$. Since $\varphi$ preserves the volume, these imply that $\Delta_{G'}$ is not the image of $\Delta_G$ by $\varphi$. 
This phenomenon happens since $\varphi(\Delta_G)$ is not convex. Indeed, consider the hyperplane $H=\{ v \in \RR^{\calP_G} \mid v_{\lambda_a}+v_{\lambda_c}=v_{\lambda_b}+v_{\lambda_d}\}$. 
Since $\varphi$ is just a linear map on $H$, we see that $\varphi(\Delta_G \cap H)$ is equal to the convex hull of the above points lying on $H$, 
which are those corresponding to \[\{1,2\}, \{1,3\}, \{1,4\}, \{1,6\}, \{2,3\}, \{2,4\}, \{2,6\}, \{3,4\}, \{3,6\}, \{4,5\},\{5,6\}.\] 
On the other hand, the vertices $\val_G(\overline{p}_{\{3,5\}})$ and $\val_G(\overline{p}_{\{4,6\}})$ lie on the different side with respect to $H$. 
Let us consider the midpoint $v'$ of $\varphi(\val_G(\overline{p}_{\{3,5\}}))$ and $\varphi(\val_G(\overline{p}_{\{4,6\}}))$. Then 
\[v'=(1/2,1/2,1,1,1/2,1,3/2,2)\] 
and $v'$ lies on $H$. If $\varphi(\Delta_G)$ is convex, then $v'$ is contained in $\varphi(\Delta_G \cap H)$. 
Due to the last coordinate of $v'$, we see that $v'$ should be the convex combination of the points 
\[\varphi(\val_G(\overline{p}_{\{3,4\}})), \varphi(\val_G(\overline{p}_{\{3,6\}})), \varphi(\val_G(\overline{p}_{\{4,5\}}))\text{ and }\varphi(\val_G(\overline{p}_{\{5,6\}})).\] 
Suppose that \[v'=c_1 \varphi(\val_G(\overline{p}_{\{3,4\}}))+ c_2 \varphi(\val_G(\overline{p}_{\{3,6\}})) + c_3 \varphi(\val_G(\overline{p}_{\{4,5\}})) + c_4 \varphi(\val_G(\overline{p}_{\{5,6\}})),\]
where $c_1,\dots,c_4 \geq 0$ and $c_1+\cdots+c_4=1$. 
Then we have $c_2+c_3+3c_4=1/2$ and $c_2+c_3+2c_4=1$. These equations conclude $c_4=-1/2$, but this contradicts $c_4\ge0$. 
\end{example}


\section{Combinatorial mutations associated to posets of Young diagrams} 
\label{sec_mutation_youngposet}

We first recall the order polytopes and chain polytopes which were originally introduced in \cite{S86}. 
Let $\Pi$ be a poset equipped with a partial order $\prec$. 
For $a,b \in \Pi$, we say that $a$ \emph{covers} $b$ if $b \prec a$ and there is no $a'$ such that $a' \neq a, b$ and $b \prec a' \prec a$. 
We say that $\alpha \subseteq \Pi$ is a \emph{poset filter} if $\alpha$ satisfies the condition: 
$$a \in \alpha \text{ and } a \prec b \;\Longrightarrow b \in \alpha. $$
We regard an empty set as a poset filter. 
Also, we say that $A \subseteq \Pi$ is an {\em antichain} if $a \not\prec b$ and $b \not\prec a$ hold for any $a,b \in A$ with $a \neq b$. 
We regard an empty set as an antichain. 

\begin{definition}[cf. \cite{S86}]
With the settings as above, we define two convex polytopes as follows: 
\begin{align*}
\calO(\Pi)&=\{ (x_a)_{a \in \Pi} \in \RR^\Pi \mid x_a \leq x_b \text{ if }a \preceq b \text{ in }\Pi, \; 0 \leq x_a \leq 1 \text{\hspace{5pt}for all } a \in \Pi\}, \\
\calC(\Pi)&=\{ (x_a)_{a \in \Pi} \in \RR^\Pi \mid 
x_{a_{i_1}}+\cdots+x_{a_{i_\ell}} \leq 1 \text{ if }a_{i_1} \prec \cdots \prec a_{i_\ell} \text{ in }\Pi, \; x_a \geq 0 \text{\hspace{5pt}for all } a \in \Pi\}. 
\end{align*}
We call $\calO(\Pi)$ the \emph{order polytope} of $\Pi$, and call $\calC(\Pi)$ the \emph{chain polytope} of $\Pi$. 
\end{definition}

We here collect some important properties of these polytopes which were shown in \cite{S86}. 
First, both $\calO(\Pi)$ and $\calC(\Pi)$ are $(0,1)$-polytopes, that is, polytopes whose vertices are $(0,1)$-vectors. 
There is a one-to-one correspondence between the vertices of $\calO(\Pi)$ (resp., $\calC(\Pi)$) and the poset filters (resp., the antichains) of $\Pi$. 
More precisely, a $(0,1)$-vector $(x_a)_{a \in \Pi}$ is a vertex of $\calO(\Pi)$ (resp., $\calC(\Pi)$) 
if and only if $\{a \mid x_a =1\}$ is a poset filter (resp., an antichain) of $\Pi$. 
In particular, both $\calO(\Pi)$ and $\calC(\Pi)$ contain the origin in their boundaries. 
Note that the poset filters one-to-one correspond to the antichains, 
thus the number of vertices of $\calO(\Pi)$ coincides with that of $\calC(\Pi)$. 
Since these are $(0,1)$-polytopes, we conclude that $\# (\calO(\Pi) \cap \ZZ^\Pi)=\# (\calC(\Pi) \cap \ZZ^\Pi)$. 

We remark that $\calO(\Pi)$ and $\calC(\Pi)$ are not necessarily unimodularly equivalent in general. 
The necessary and sufficient condition for $\calO(\Pi)$ and $\calC(\Pi)$ to be unimodularly equivalent is given in \cite[Theorem 2.1]{HL}. 
Moreover, we have $\# (m\calO(\Pi) \cap \ZZ^\Pi)=\# (m\calC(\Pi) \cap \ZZ^\Pi)$ for any positive integer $m$ \cite{S86}, 
which is proved by using the \textit{transfer map} $\phi:\RR^\Pi \rightarrow \RR^\Pi$ defined as follows: 
\[
\phi((x_a)_{a \in \Pi})=(\phi(x_a))_{a \in \Pi}, \text{ where }\phi(x_a):=\min\{x_a-x_{b} \mid \text{$a$ covers $b$} \}.
\]
This map $\phi$ gives a bijection between $\calO(\Pi)$ and $\calC(\Pi)$, precisely $\phi\big(\calO(\Pi)\big)=\calC(\Pi)$ (see \cite[Theorem 3.2]{S86}). 

\medskip

In what follows, we consider the order polytope and the chain polytope for a special class of posets, denoted by $\Pi_{k,n}$, 
whose Hasse diagram takes the form as in Figure~\ref{hasse_poset_kn}. 
In this case, the polytope $\calO(\Pi_{k,n})$ is called the  \emph{Gelfand--Tsetlin $($GT$)$ polytope}, 
and $\calC(\Pi_{k,n})$ is called the \emph{Feigin--Fourier--Littelmann--Vinberg $($FFLV$)$ polytope} after the work \cite{GT,FFL,Vin}, 
see Subsection~\ref{subsec_intro_summary} for more details. 

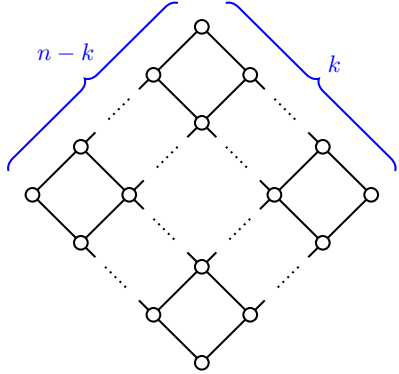
\begin{figure}[H]
\begin{center}
\scalebox{0.9}{
\begin{tikzpicture}
\newcommand{\noderad}{0.1cm} 
\newcommand{\edgewidth}{0.03cm} 

\foreach \n/\a/\b in {11/0/0, 12/-45/1, 13/-45/2.5, 14/-45/3.5} {
\coordinate (V\n) at (\a:\b); }; 
\foreach \n/\m/\a/\b in {21/11/225/1, 22/12/225/1, 23/13/225/1, 24/14/225/1, 
31/11/225/2.5, 32/12/225/2.5, 33/13/225/2.5, 34/14/225/2.5, 
41/11/225/3.5, 42/12/225/3.5, 43/13/225/3.5, 44/14/225/3.5} {
\path (V\m) ++(\a:\b) coordinate (V\n); }; 

\foreach \s/\t in {11/12, 13/14, 21/22, 23/24, 31/32, 33/34, 41/42, 43/44, 
11/21, 31/41, 12/22, 32/42,13/23, 33/43,14/24, 34/44} {
\draw[line width=\edgewidth] (V\s)--(V\t); };
\foreach \n/\a in {21/225, 22/225, 23/225, 24/225, 31/45, 32/45, 33/45, 34/45, 
12/-45, 22/-45, 32/-45, 42/-45, 13/135, 23/135, 33/135, 43/135} {
\draw[line width=\edgewidth] (V\n)-- ++(\a:0.3); };
\foreach \n/\a in {21/225, 22/225, 23/225, 24/225, 12/-45, 22/-45, 32/-45, 42/-45} {
\path (V\n) ++(\a:0.5) coordinate (V\n+); 
\draw[line width=\edgewidth, dotted] (V\n+)-- ++(\a:0.5); };

\foreach \n in {11,12,13,14, 21, 22, 23, 24, 31, 32, 33, 34, 41, 42, 43, 44} {
\filldraw [fill=white, line width=\edgewidth] (V\n) circle [radius=\noderad] ; };

\draw [line width=0.03cm, decorate, blue, decoration={brace, amplitude=5pt}] (0.35,0.35) -- ++(-45:3.5) node[midway,xshift=0.35cm,yshift=0.35cm,blue] {\small $k$}; 
\draw [line width=0.03cm, decorate, blue, decoration={brace, mirror, amplitude=5pt}] (-0.35,0.35) -- ++(225:3.5) node[midway,xshift=-0.4cm,yshift=0.5cm,blue] {\small $n-k$}; 
\end{tikzpicture}}
\end{center}
\caption{The Hasse diagram of the poset $\Pi_{k,n}$}
\label{hasse_poset_kn}
\end{figure}

We here recall a partial order for Young diagrams in $\widetilde{\calP}_{k,n}$ defined in Definition~\ref{def_order_Young}. 
For the rectangle plabic graph $G^{\rec}_{k,n}$, we see that 
the poset structure of Young diagrams in $\calP_{G^{\rec}_{k,n}}=\widetilde{\calP}_{G^{\rec}_{k,n}}{\setminus}\{\varnothing\}$ coincides with $\Pi_{k,n}$. 
Thus, in the following, we label each component of an element in $\Pi_{k,n}$ by a Young diagram in $\calP_{G^{\rec}_{k,n}}$ along this partial order. 

\begin{example}
\label{ex_poset_Gr(2,6)}
We consider the rectangle plabic graph $G^{\rec}_{2,6}$ given in Figure~\ref{ex_plabic3}. 
In this case, we have 
\[
\ytableausetup{boxsize=4.5pt}
\widetilde{\calP}_{G^{\rec}_{2,6}}=\left\{\varnothing \hspace{2pt},\hspace{3pt} \ydiagram{1} \hspace{2pt},\hspace{3pt} \ydiagram{2} \hspace{2pt},\hspace{3pt} 
\ydiagram{3} \hspace{2pt},\hspace{3pt} \ydiagram{4} \hspace{2pt},\hspace{3pt} 
\raise0.5ex\hbox{\ydiagram{1,1}} \hspace{2pt},\hspace{3pt} \raise0.5ex\hbox{\ydiagram{2,2}} \hspace{2pt},\hspace{3pt} 
\raise0.5ex\hbox{\ydiagram{3,3}} \hspace{2pt},\hspace{3pt} \raise0.5ex\hbox{\ydiagram{4,4}}\,\right\}.
\]
The Hasse diagram of the poset $\calP_{G^{\rec}_{2,6}}$, which coincides with the one of $\Pi_{2,6}$, is the following.

\begin{center}
\scalebox{1}{
\begin{tikzpicture}
\newcommand{\edgewidth}{0.03cm} 
\ytableausetup{boxsize=4.5pt}

\node (V1) at (0,0) {\ydiagram{1}};  \node (V2) at (-1,1) {\ydiagram{1,1}};  
\node (V3) at (1,1) {\ydiagram{2}};  \node (V4) at (0,2) {\ydiagram{2,2}};  
\node (V5) at (2,2) {\ydiagram{3}};  \node (V6) at (1,3) {\ydiagram{3,3}}; 
\node (V7) at (3,3) {\ydiagram{4}};  \node (V8) at (2,4) {\ydiagram{4,4}};  

\foreach \s/\t in {1/2, 1/3, 2/4, 3/4, 3/5, 4/6, 5/6, 5/7, 6/8, 7/8} {
\draw[line width=\edgewidth] (V\s)--(V\t); };

\end{tikzpicture}}
\end{center}
\end{example}

Then we note the relationship between the GT polytope $\calO(\Pi_{k,n})$ and 
the Newton--Okounkov polytope associated to the rectangle plabic graph $G^{\rm rec}_{k,n}$. 

\begin{proposition}[{\cite[Lemma~16.2]{RW}}]
\label{prop_unimodular_GT}
Let $G=G^{\rm rec}_{k,n}$. 
We consider the linear map 
$f\colon\RR^{\calP_G}\rightarrow\RR^{\calP_G}$ given by 
\[
v_{i\times j}\mapsto x_{i\times j}\coloneqq v_{i\times j}-v_{(i-1)\times (j-1)},
\]
where $v_{i\times j}$ is the component of $v\in\RR^{\calP_G}$ corresponding to 
the rectangle Young diagram of shape $i\times j$, and $v_{i\times j}\mapsto v_{i\times j}$ if $i=1$ or $j=1$. 
Then $f$ is unimodular, and we have $f(\Delta_G)=\calO(\Pi_{k,n})$, that is, 
the Newton--Okounkov polytope $\Delta_G$ is unimodularly equivalent to the GT polytope $\calO(\Pi_{k,n})$. 
In particular, $\Delta_G$ is a lattice polytope. 
\end{proposition}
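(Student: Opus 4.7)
The plan is to proceed in three steps: verify that $f$ is unimodular, compute $f$ on the lattice points of $\Delta_G$ and identify them with the vertices of $\calO(\Pi_{k,n})$, and finally upgrade this identification to an equality of polytopes. For unimodularity, order the elements of $\calP_G$ so that every $(i-1)\times(j-1)$ precedes $i\times j$ (for instance by the total number of cells); then the matrix of $f$ in this ordered basis is lower triangular with $1$'s on the diagonal, hence $f\in\GL(s,\ZZ)$ with $s=k(n-k)$.

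Next I compute $f$ on the lattice points of $\Delta_G$, which by Theorem~\ref{thm_NO=C} are exactly the ${n \choose k}$ valuations $\val_G(\overline{p}_J)$ of the Pl\"{u}cker coordinates. By the lattice path description in Remark~\ref{rem_GTtableau}, $\val_G(\overline{p}_J)_{i\times j}$ equals the number of paths in the strongly minimal flow $F_{\min}$ from $[k]$ to $J$ that have the cell labeled $i\times j$ on their left in the $k\times(n-k)$ grid. The claim is that $f(\val_G(\overline{p}_J))=\sum_{\lambda\in F_J}\bfe_\lambda$, where $F_J\coloneqq\Pi_{k,n}\setminus\lambda(J)$ and $\lambda(J)\in\calP_{k,n}$ is the Young diagram attached to $J$ via Definition~\ref{def_Young} (with a cell $(i,j)$ of $\Pi_{k,n}$ identified with the rectangle Young diagram of shape $i\times j$). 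Concretely, the difference $x_{i\times j}=v_{i\times j}-v_{(i-1)\times(j-1)}$ counts the paths of $F_{\min}$ that pass strictly between the diagonally adjacent cells $(i-1,j-1)$ and $(i,j)$; by the non-crossing staircase nature of $F_{\min}$ on $G^{\rm rec}_{k,n}$ this count is either $0$ or $1$, and equals $1$ exactly when some path of $F_{\min}$ turns at the northwest corner of $(i,j)$, which in turn holds iff $(i,j)\notin\lambda(J)$, i.e.\ $(i\times j)\in F_J$. The boundary cases $i=1$ or $j=1$, where $f$ acts as the identity on the corresponding coordinate, are handled in the same spirit.

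Since $J\mapsto\lambda(J)\mapsto F_J$ is a bijection from ${[n] \choose k}$ onto the set of filters of $\Pi_{k,n}$, the images $\{f(\val_G(\overline{p}_J))\}_J$ coincide with the ${n \choose k}$ vertices of $\calO(\Pi_{k,n})$, giving $f(\Conv_G)=\calO(\Pi_{k,n})\subseteq f(\Delta_G)$. For the reverse inclusion I would invoke the superpotential description $\Delta_G=\Gamma_G$ of \cite[Theorem~16.18]{RW}: for $G^{\rm rec}_{k,n}$ the tropicalized superpotential yields explicit linear inequalities cutting out $\Gamma_G$, and a direct computation shows that $f$ transforms them into the defining inequalities of $\calO(\Pi_{k,n})$ (namely $0\le x_\lambda\le 1$ and $x_\lambda\le x_{\lambda'}$ whenever $\lambda\preceq\lambda'$ in $\Pi_{k,n}$). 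This yields $f(\Delta_G)=\calO(\Pi_{k,n})$, and unimodularity of $f$ then forces $\Delta_G$ to be a lattice polytope.

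The main obstacle is the combinatorial identification in the second step: pinning down precisely when a path of the strongly minimal flow turns is what forces $v_{i\times j}-v_{(i-1)\times(j-1)}$ to be the indicator of $(i,j)\notin\lambda(J)$, and the cleanest route is to read this off directly from the Gelfand--Tsetlin tableau attached to $J$ as in Remark~\ref{rem_GTtableau}; the inequality bookkeeping in the third step is conceptually straightforward but tedious.
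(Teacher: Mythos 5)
The paper itself contains no proof of this proposition: it is quoted verbatim from \cite[Lemma~16.2]{RW}, so your attempt can only be measured against the Rietsch--Williams argument, which it in fact reconstructs in outline and along essentially the same route. Your first two steps are correct. Unimodularity is fine: ordering $\calP_G$ by the number of cells makes the matrix of $f$ unipotent triangular. The combinatorial identification is also right: by Remark~\ref{rem_GTtableau} the strongly minimal flow consists of the southeast boundary path of $\lambda(J)$ together with its successive southeast translates clipped to the grid, so $v_{i\times j}$ equals the number of $m\ge 1$ with $(i-m+1,j-m+1)\notin\lambda(J)$, and telescoping gives that $x_{i\times j}$ equals $1$ if $(i,j)\notin\lambda(J)$ and $0$ otherwise; since $J\mapsto \Pi_{k,n}\setminus\lambda(J)$ is a bijection onto the poset filters, $f$ sends the valuations bijectively onto the vertices of $\calO(\Pi_{k,n})$ (this can be confirmed against Table~\ref{table_rec_Gr(2,6)}). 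One slip: the difference $v_{i\times j}-v_{(i-1)\times(j-1)}$ records a path of the flow \emph{passing through} the lattice point shared by the cells $(i-1)\times(j-1)$ and $i\times j$, not necessarily \emph{turning} there --- for $J=\{4,5\}$ in $\Gr(2,6)$ the relevant path runs straight south through the corner of the cell $2\times 2$. What bounds this count by $1$ is the vertex-disjointness of the paths in a flow; the repair is purely cosmetic.

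The substantive issue is your third step, and you deserve credit for isolating it correctly: since the latticeness of $\Delta_G$ is part of the conclusion, Theorem~\ref{thm_NO=C} only yields $\calO(\Pi_{k,n})=f(\Conv_G)\subseteq f(\Delta_G)$, and no lattice-point count can close the gap (a rational polytope can strictly contain $\calO(\Pi_{k,n})$ without acquiring new lattice points). Your plan for the reverse inclusion --- invoke $\Delta_G=\Gamma_G$ from \cite[Theorem~16.18]{RW} and check that $f$ carries the tropicalized superpotential inequalities cutting out $\Gamma_G$ to the inequalities $0\le x_\lambda\le 1$ and $x_\lambda\le x_{\lambda'}$ for $\lambda\preceq\lambda'$ defining $\calO(\Pi_{k,n})$ --- is precisely how Rietsch--Williams prove their Lemma~16.2. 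But in your write-up this verification is only announced (``a direct computation shows''), and it is the entire substance of the cited lemma: one first needs the explicit Laurent expansion of the superpotential in the rectangle cluster before the inequalities can even be written down. So the proposal is a correct strategy with a correct and nontrivial diagnosis of where the difficulty sits, but the decisive computation is deferred rather than executed; as it stands you have proved $f(\Conv_G)=\calO(\Pi_{k,n})$ and reduced the remaining inclusion to exactly the computation that constitutes the result being proved.
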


We note that by the map $f$ in Proposition~\ref{prop_unimodular_GT}, $v_{i\times j}$ is sent to the component of an element in $\calO(\Pi_{k,n})$ 
labeled by $i\times j$. Thus we may write $\calO(\Pi_{k,n})$ as 
\[
f(\Delta_{G^{\rm rec}_{k,n}})=
\calO(\Pi_{k,n})=\{ (x_\lambda) \in \RR^{\calP_{G^{\rm rec}_{k,n}}} 
\mid x_{\lambda} \leq x_{\lambda^\prime} \text{ if }\lambda \preceq \lambda^\prime \text{ in }\calP_{G^{\rm rec}_{k,n}}, 
\; 0 \leq x_\lambda \leq 1 \text{\hspace{5pt}for all } \lambda \in \calP_{G^{\rm rec}_{k,n}}\}.
\]

\begin{example}
\ytableausetup{boxsize=3.5pt}
We know that the Newton--Okounkov polytope $\Delta_{G^{\rm rec}_{2,6}}$ is the convex hull of the row vectors, 
which are the valuations of the Pl\"{u}cker coordinates, in the table of Example~\ref{ex_NOpolytope_Gr(2,6)}. 
Let $G=G^{\rm rec}_{2,6}$. 
We apply the transformation $f$ in Proposition~\ref{prop_unimodular_GT}, which is described as 
\[
v_{\ydiagram{4,4}}  \mapsto v_{\ydiagram{4,4}} -v_{\ydiagram{3}}, \hspace{15pt}
v_{\ydiagram{3,3}}  \mapsto v_{\ydiagram{3,3}} -v_{\ydiagram{2}}, \hspace{15pt}
v_{\ydiagram{2,2}}  \mapsto v_{\ydiagram{2,2}} -v_{\ydiagram{1}}, 
\]
\[
v_\lambda  \mapsto v_\lambda 
\quad \text{for } \lambda\in\calP_G{\setminus}\{\raise0.6ex\hbox{\ydiagram{4,4}}, \raise0.6ex\hbox{\ydiagram{3,3}}, \raise0.6ex\hbox{\ydiagram{2,2}}\}, 
\]
to the valuations. 
Then we have the GT polytope $\calO(\Pi_{2,6})$ as the convex hull of $f(\val_G(\overline{p}_J))$ for all $J\in{[6] \choose 2}$ 
by Proposition~\ref{prop_unimodular_GT}. 
\end{example}

Now, for a plabic graph $G$ of type $\pi_{k,n}$, we introduce the new plabic graph $G^\vee$, which will be called the \emph{dual} of $G$. 
We denote by $G^*$ the plabic graph obtained by replacing black vertices with white ones and vice versa in $G$. 
Then we define the dual $G^\vee$ of $G$ by changing the numbering of the boundary vertices of $G^*$ as $r\mapsto r+k$ (modulo $n$) 
for any $r=1,\dots,n$. We see that $G^\vee$ is a plabic graph of type $\pi_{k,n}$. 

\begin{observation}
\label{obs_dualplabic}
We now consider the rectangle plabic graph $G^{\rm rec}_{n-k,n}$, and its dual $(G^{\rm rec}_{n-k,n})^\vee$. 
We note that $(G^{\rm rec}_{n-k,n})^\vee$ is of type $\pi_{k,n}$, but it is no longer rectangle. 
Let $\lambda_{i\times j}$ be the rectangle Young diagram of shape $i\times j$, where $1\le i\le k, 1\le j\le n-k$. 
Assigning the boundary vertex $G^{\rm rec}_{n-k,n}$ numbered by $i$ to that of $(G^{\rm rec}_{n-k,n})^\vee$ numbered by $i+k$, 
we have a natural bijection between faces of $G^{\rm rec}_{n-k,n}$ and those of $(G^{\rm rec}_{n-k,n})^\vee$. 
With this bijection, we see that the face of $G^{\rm rec}_{n-k,n}$ labeled by $\lambda_{j\times i}$ 
corresponds to the face of $(G^{\rm rec}_{n-k,n})^\vee$ labeled by $\lambda_{\max}{\setminus}\lambda_{i\times j}$. 
Here, $\lambda_{\max}$ is the rectangle Young diagrams of shapes $k\times (n-k)$ and 
$\lambda_{\max}{\setminus}\lambda_{i\times j}$ is the Young diagram obtained by arranging $\lambda_{i\times j}$ so that 
its bottom-right corner coincides with the bottom-right corner of $\lambda_{\max}$ and cutting out $\lambda_{i\times j}$ from $\lambda_{\max}$. 
\end{observation}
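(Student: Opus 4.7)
The plan is to decouple the observation into two pieces: (i) a general statement that the face labels in $(G^{\rm rec}_{n-k,n})^\vee$ are the set-theoretic complements in $[n]$ of the face labels in $G^{\rm rec}_{n-k,n}$, and (ii) a combinatorial check that this complementation sends the $(n-k)$-subset associated to $\lambda_{j\times i}$ to the $k$-subset associated to $\lambda_{\max}\setminus\lambda_{i\times j}$. Piece (i) is purely about the trip combinatorics and holds for any plabic graph of type $\pi_{n-k,n}$; piece (ii) is a direct computation using Definition~\ref{def_Young}.

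For (i), I would track what happens to trips under the two operations that define $G^\vee$. Swapping black and white vertices interchanges the rules ``maximally right at black, maximally left at white'' and ``maximally right at white, maximally left at black'', and the latter rule is obtained from the former by reversing the direction of every strand. Consequently each trip $T^*_r$ in $G^*$ is the reversal of the trip $T_{\pi_{n-k,n}^{-1}(r)}$ in $G^{\rm rec}_{n-k,n}$, and, since reversal interchanges left and right, a face $F$ lies to the left of $T^*_r$ precisely when it lies to the right of $T_{\pi_{n-k,n}^{-1}(r)}$, that is, when $\pi_{n-k,n}^{-1}(r)\notin J_F$. Hence the label of $F$ in $G^*$ is $[n]\setminus\pi_{n-k,n}(J_F)$. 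The renumbering $r\mapsto r+k$ then shifts each element of this set by $+k$ modulo $n$; since $\pi_{n-k,n}$ is itself the shift by $n-k$, the combined shift on each element of $J_F$ is by $+n$, which is the identity modulo $n$. This gives the desired identity: the label of $F$ in $(G^{\rm rec}_{n-k,n})^\vee$ is $[n]\setminus J_F$.

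For (ii), I would read off both subsets directly from the SE-border lattice paths in the respective bounding rectangles. Arranging $\lambda_{j\times i}$ at the top-left of an $(n-k)\times k$ rectangle produces an SE-border path of the form $\mathrm{W}^{k-i}\mathrm{S}^{j}\mathrm{W}^{i}\mathrm{S}^{n-k-j}$, whose south-step labels form the $(n-k)$-subset
$J_{j\times i}=\{k-i+1,\dots,k-i+j\}\cup\{k+j+1,\dots,n\}$.
On the other hand, $\lambda_{\max}\setminus\lambda_{i\times j}$, viewed as the partition $\bigl((n-k)^{k-i},(n-k-j)^{i}\bigr)$ inside the $k\times(n-k)$ rectangle, has SE-border path $\mathrm{S}^{k-i}\mathrm{W}^{j}\mathrm{S}^{i}\mathrm{W}^{n-k-j}$, i.e.\ the same word with $\mathrm{S}$ and $\mathrm{W}$ interchanged, whose south-step labels are
$\{1,\dots,k-i\}\cup\{k-i+j+1,\dots,k+j\}=[n]\setminus J_{j\times i}$.
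Combining (i) and (ii) yields the observation. The main obstacle is purely bookkeeping: one must carefully distinguish the two rectangle conventions $(n-k)\times k$ versus $k\times(n-k)$, keep track of the top-left versus bottom-right placements, and handle the degenerate cases $i\in\{1,k\}$ or $j\in\{1,n-k\}$ in which some segments of the paths collapse to length zero.
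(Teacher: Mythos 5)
Your proposal is correct; note, however, that the paper itself offers no argument for this statement at all --- it is labelled an \emph{Observation}, used later in the proof of Theorem~\ref{thm_unimodular_FFLV2}, and implicitly justified only by inspection of Figure~\ref{ex_plabic_dual} and the reference \cite{FF}. So there is no paper proof to compare against; what you have done is supply the missing justification, and both halves of it check out. Piece (i) is the key conceptual step: since swapping colors interchanges the ``maximally right at black / maximally left at white'' rules, trips of $G^*$ are exactly the reversals of trips of $G$, so $\pi_{G^*}=\pi_G^{-1}$, a face lies to the left of a $G^*$-trip iff it lies to the right of the corresponding $G$-trip, and hence the $G^*$-label of a face is $[n]\setminus \pi_{n-k,n}(J_F)$; the shift $r\mapsto r+k$ then cancels the shift by $n-k$ hidden in $\pi_{n-k,n}$, giving the clean identity that the label of each face of $(G^{\rm rec}_{n-k,n})^\vee$ is the set-theoretic complement of its label in $G^{\rm rec}_{n-k,n}$. (This also yields, as a byproduct you could state explicitly, the first assertion of the Observation, namely $\pi_{(G^{\rm rec}_{n-k,n})^\vee}=\pi_{k,n}$, since conjugating the rotation $\pi_{G^*}=\pi_{n-k,n}^{-1}=\pi_{k,n}$ by the relabelling rotation leaves it unchanged.) Piece (ii) is correct bookkeeping: with the conventions of Definition~\ref{def_Young}, $\lambda_{j\times i}$ inside the $(n-k)\times k$ rectangle has border word $\mathrm{W}^{k-i}\mathrm{S}^{j}\mathrm{W}^{i}\mathrm{S}^{n-k-j}$ and subset $\{k-i+1,\dots,k-i+j\}\cup\{k+j+1,\dots,n\}$, while $\lambda_{\max}\setminus\lambda_{i\times j}=\bigl((n-k)^{k-i},(n-k-j)^{i}\bigr)$ inside the $k\times(n-k)$ rectangle has the $\mathrm{S}\leftrightarrow\mathrm{W}$-interchanged word and subset $\{1,\dots,k-i\}\cup\{k-i+j+1,\dots,k+j\}$; these two subsets are indeed disjoint with union $[n]$, so complementation matches the claimed pairing of Young diagrams. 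The observation that complementing a subset corresponds to interchanging $\mathrm{S}$ and $\mathrm{W}$ in the border word is exactly the right mechanism, and it handles the degenerate cases $i\in\{1,k\}$, $j\in\{1,n-k\}$ uniformly (empty blocks of steps cause no harm); the only case outside the stated ranges is the pairing of the empty face with $\lambda_{\max}$, which your formulas also recover.
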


It was shown in \cite[Theorem~3]{FF} that the Newton--Okounkov polytope associated to $(G^{\rm rec}_{n-k,n})^\vee$ is unimodularly equivalent to the FFLV polytope $\calC(\Pi_{k,n})$. 
We will study further details of this equivalence. 
Namely, we give the precise description of a unimodular transformation relating $\Delta_{(G^{\rm rec}_{n-k,n})^\vee}$ and $\calC(\Pi_{k,n})$ 
in terms of the Young diagrams labeling $(G^{\rm rec}_{n-k,n})^\vee$, 
which is an analogue of Proposition~\ref{prop_unimodular_GT} for $(G^{\rm rec}_{n-k,n})^\vee$. 

To do so, we prepare some notations. 
Let $G=G^{\rm rec}_{n-k,n}$ and $(v_\lambda)_{\lambda\in\widetilde{\calP}_{G^\vee}}\in\RR^{\widetilde{\calP}_{G^\vee}}$. 
Let $w_{i \times j}$ denote the component of $(v_\lambda)_{\lambda\in\widetilde{\calP}_{G^\vee}}$ corresponding to $\lambda_{\max}{\setminus}\lambda_{i\times j}$. 
We use the convention $w_{i \times j}=w_\varnothing$ if $i \leq 0$ or $j \leq 0$. 
For example, for $k=2$ and $n=6$, we have 
\ytableausetup{boxsize=3.5pt}
\begin{center}
\begin{tabular}{lllll}
$w_{1 \times 1}=v_{\ydiagram{4,3}}$,&
$w_{1 \times 2}=v_{\ydiagram{4,2}}$,&
$w_{1 \times 3}=v_{\ydiagram{4,1}}$,&
$w_{1 \times 4}=v_{\ydiagram{4}}$, \\
$w_{2 \times 1}=v_{\ydiagram{3,3}}$,&
$w_{2 \times 2}=v_{\ydiagram{2,2}}$,&
$w_{2 \times 3}=v_{\ydiagram{1,1}}$,& 
$w_{2 \times 4}=v_\varnothing$, &
$w_\varnothing=v_{\ydiagram{4,4}}$. 
\end{tabular}
\end{center}

\medskip

\begin{theorem}
\label{thm_unimodular_FFLV2}
Let $G=G^{\rm rec}_{n-k,n}$ and $\calP^\prime_{G^\vee}\coloneqq \widetilde{\calP}_{G^\vee}{\setminus}\{\lambda_{\max}\}$. 
We consider the linear map $g\colon\RR^{\calP_{G^\vee}}\rightarrow\RR^{\calP^\prime_{G^\vee}}$ given by 
\[
w_{i\times j} \mapsto x_{i\times j}\coloneqq w_{(i-1) \times j}+w_{i \times (j-1)} -w_{i \times j} - w_{(i-1) \times (j-1)}. 
\]
Note that $w_\varnothing=w_{0 \times 0} \mapsto x_{0 \times 0}=w_{(-1) \times 0}+w_{0 \times (-1)} - w_{0 \times 0}- w_{(-1) \times (-1)} = 0$. 
Then $g$ is unimodular, and we have $g(\Delta_{G^\vee})=\calC(\Pi_{k,n})$. 
In particular, $\Delta_{G^\vee}$ is a lattice polytope. 
\end{theorem}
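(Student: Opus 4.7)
The proof naturally splits into showing (i) that the linear map $g$ is unimodular, and (ii) that $g(\Delta_{G^\vee})$ coincides with $\calC(\Pi_{k,n})$. For (i), I would order the indices $(i,j)$ with $1 \le i \le k$ and $1 \le j \le n-k$ so that $(i',j')$ precedes $(i,j)$ whenever $i' \le i$, $j' \le j$, and $(i',j') \ne (i,j)$. In the formula
\[
x_{i \times j} = w_{(i-1) \times j} + w_{i \times (j-1)} - w_{i \times j} - w_{(i-1) \times (j-1)},
\]
every variable other than $w_{i \times j}$ respects this precedence, so in this order the matrix representation of $g$ is upper triangular with each diagonal entry equal to $-1$. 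Hence $g \in \GL(|\calP^\prime_{G^\vee}|, \ZZ)$, proving unimodularity.

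The heart of (ii) is an explicit computation of $\val_{G^\vee}(\overline{p}_J)$ for each $J \in {[n] \choose k}$. My plan is to transfer the grid and lattice-path description of Remark~\ref{rem_GTtableau}, which applies to $G^{\rm rec}_{n-k,n}$, to the dual graph $G^\vee$ via Observation~\ref{obs_dualplabic}: there, the $(j \times i)$-rectangle face of $G^{\rm rec}_{n-k,n}$ is matched with the face of $G^\vee$ labelled $\lambda_{\max}\setminus\lambda_{i\times j}$. The acyclic perfect orientation of $G^\vee$ with source $[k]$ is then obtained by reversing and relabelling the standard orientation on $G^{\rm rec}_{n-k,n}$ with source $[n-k]$, so strongly minimal $J$-flows on $G^\vee$ can be read off from an appropriately transposed and complemented lattice-path pattern on the $k \times (n-k)$ grid. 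This should yield a closed combinatorial formula in which $(\val_{G^\vee}(\overline{p}_J))_{\lambda_{\max}\setminus\lambda_{i\times j}}$ is expressed as a count of how the strongly minimal flow interacts with a fixed $i\times j$ sub-rectangle of the grid.

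Once such a formula is in hand, I would check that the discrete second-difference operator
\[
w_{(i-1)\times j} + w_{i\times(j-1)} - w_{i\times j} - w_{(i-1)\times(j-1)}
\]
applied to $\val_{G^\vee}(\overline{p}_J)$ produces the value $1$ at precisely the distinguished turning corners of the minimal flow and $0$ elsewhere. These corners form an antichain $A_J \subseteq \Pi_{k,n}$ with respect to the poset order on Young diagrams, and the assignment $J \mapsto A_J$ should be a bijection onto the antichains of $\Pi_{k,n}$, since both sets have cardinality ${n \choose k}$. Consequently, $g(\val_{G^\vee}(\overline{p}_J))$ equals the indicator vector $\chi_{A_J}$ of an antichain, and every vertex of $\calC(\Pi_{k,n})$ is attained exactly once.

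From this we obtain the inclusion $\calC(\Pi_{k,n}) = \Conv\{\chi_{A_J} \mid J \in \tbinom{[n]}{k}\} = g(\Conv_{G^\vee}) \subseteq g(\Delta_{G^\vee})$. For the reverse inclusion, I would invoke a volume argument: the Ehrhart polynomials of $r\Delta_{G^\vee}$ and $r\calC(\Pi_{k,n})$ both compute $\dim \rmH^0(\XX_{k,n}, \calO(r))$ (by Theorem~\ref{thm_toric_degeneration} and by the FFLV character formula of \cite{FFL} respectively), so their leading coefficients, and hence the volumes of the two polytopes, agree. Since $g$ is unimodular it preserves volume, so the containment $\calC(\Pi_{k,n}) \subseteq g(\Delta_{G^\vee})$ together with $\mathrm{vol}(g(\Delta_{G^\vee})) = \mathrm{vol}(\calC(\Pi_{k,n}))$ forces equality; then $\Delta_{G^\vee}$ is automatically a lattice polytope. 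The main obstacle in this program is the second paragraph: carefully controlling how the duality operation interacts with the lattice-path encoding of minimal flows, and verifying uniformly in $k$ and $n$ that $g$ extracts antichain indicators, a pattern that is transparent on small cases such as $\Gr(2,6)$ but requires care to state precisely in general.
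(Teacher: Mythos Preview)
Your overall plan---compute $g(\val_{G^\vee}(\overline{p}_J))$, identify the result as an antichain indicator, and then close up via a volume comparison---matches the paper's in spirit, but the paper sidesteps exactly the part you flag as the ``main obstacle''. Rather than computing the strongly minimal flow for a general $J$ on the dualized grid, the paper treats only the special $J$ with $|J\setminus[k]|=1$ directly (your second-difference intuition is precisely what is verified there: the single path has one corner, and $g$ extracts it). For arbitrary $J'$, the paper then invokes \cite[Proposition~3]{FF}, which asserts that $\val_{G^\vee}(\overline{p}_{J'})$ is a \emph{sum} of these single-path valuations $\val_{G^\vee}(\overline{p}_{J_\ell})$, one for each element of $J'\setminus[k]$. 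Linearity of $g$ then immediately gives the antichain indicator. This decomposition is the key input you are missing; carrying out the general grid computation from scratch, while plausible, would essentially reprove that proposition.

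Your added volume argument for the reverse inclusion $g(\Delta_{G^\vee})\subseteq\calC(\Pi_{k,n})$ is a sensible way to finish, and indeed the paper's proof does not spell out this step. However, invoking Theorem~\ref{thm_toric_degeneration} for the Ehrhart count is not quite right: that theorem involves the rescaling factor $r_G$, which is not known to be $1$ before you know $\Delta_{G^\vee}$ is a lattice polytope with the integer decomposition property. A cleaner route, using only what is already in the paper, is that $\Delta_{G^\vee}$ and $\Delta_{G^{\rm rec}_{k,n}}$ are connected by tropicalized cluster mutations (Proposition~\ref{prop_move_equiv} and Theorem~\ref{thm_trop_map_NO}), which by Proposition~\ref{prop_tropical=combmutation} are compositions of combinatorial mutations and unimodular maps and hence preserve volume; then $\mathrm{vol}(\Delta_{G^\vee})=\mathrm{vol}(\Delta_{G^{\rm rec}_{k,n}})=\mathrm{vol}(\calO(\Pi_{k,n}))=\mathrm{vol}(\calC(\Pi_{k,n}))$ by Proposition~\ref{prop_unimodular_GT} and Stanley's transfer map. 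Also, your cardinality remark ``both sets have ${n\choose k}$ elements'' only gives a bijection once you know the map $J\mapsto A_J$ is injective (or surjective); this needs one extra line.
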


\begin{proof}
We divide our proof into three steps. 

\noindent
{\bf (Step 1)}: Let $J=\{j_1,\ldots,j_k\} \in \binom{[n]}{k}$ with $1 \leq j_1 < \cdots < j_k \leq n$. 
Assume that $j_{k-1} \leq k < j_k$. Let $i$ be the unique integer with $i \in [k] \setminus \{j_1,\ldots,j_{k-1}\}$ and let $j=j_k-k$. 
First, we prove that \begin{align*}
g(\val_{G^\vee}(\overline{p}_J))=(x_{a \times b})_{\small\substack{1 \leq a \leq k\hphantom{-k}\\ 1 \leq b \leq n-k}}, \text{ where }x_{a \times b}=\begin{cases}
1 &\text{ for } (a, b) = (k+1-i,j), \\
0 &\text{ otherwise}. 
\end{cases}
\end{align*}

Since we are considering the acyclic perfect orientation $\calO$ with $I_\calO=[k]$, 
we see that $I_\calO - (I_\calO \cap J)=\{i\}$ and $J - (I_\calO \cap J)=\{j_k\}$ by the assumption on $J$. 
Thus, we may consider the paths from $i$ to $j_k$ for computing a $J$-flow. 
We recall that the strongly minimal flows (and hence the valuation) for $G$ can be read off from the labeled $(n-k)\times k$ grid discussed in Remark~\ref{rem_GTtableau}. 
Thus, by Observation~\ref{obs_dualplabic}, we consider the $(n-k)\times k$ grid whose $i\times j$ entry is labeled by $\lambda_{\max}{\setminus}\lambda_{i\times j}$, and change the numbering in the right and the bottom side so that $i\mapsto i+k$ for observing strongly minimal flows on $G^\vee$. 
We also give the orientation to the grid (except the right and the bottom sides) so that vertical lines are oriented from the bottom to the top 
and horizontal lines are oriented from the left to the right,  
because the changing the colors of internal vertices reverses the acyclic orientation. 
(We would write $\lambda_{\max}{\setminus}\varnothing=\lambda_{\max}$ at the upper left margin for convenience.) 
Then we see that the strongly minimal flow on $G^\vee$ from $[k]$ to the above $J$ corresponds to the lattice path given by a vertical path 
from $i$ followed by a horizontal path ending in $j_k$ as in Figure~\ref{fig_grid_dual} (see the paragraph just above of \cite[Proposition 3]{FF}). 
Those imply that $\val_{G^\vee}(\overline{p}_J)=(w_{a \times b})_{a,b}$, where 
\begin{align*}
w_{a \times b}=\begin{cases}
1 &\text{ if }a \leq k-i \text{ or } b \leq j-1, \\
0 &\text{ otherwise}. 
\end{cases}
\end{align*}
Hence, by applying $g$, we obtain that $g(\val_{G^\vee}(\overline{p}_J))=(g(w_{a \times b}))_{a,b}$, where 
\begin{align*}
g(w_{a \times b})=w_{(a-1) \times b}+w_{a \times (b-1)} -w_{a \times b} - w_{(a-1) \times (b-1)}=
\begin{cases}
1 &\text{ if }(a,b)=(k+1-i,j), \\
0 &\text{ otherwise}. 
\end{cases}
\end{align*}

\noindent
{\bf (Step 2)}: 
Regarding other $J' \in \binom{[n]}{k}$, \cite[Proposition 3]{FF} claims that $\val_{G^\vee}(\overline{p}_{J'})$ can be described 
as a sum of $\val_{G^\vee}(\overline{p}_J)$'s for $J \in \binom{[n]}{k}$ considered in (Step 1). 
More precisely, for $J'=\{j_1',\ldots,j_k'\}$ with $1 \leq j_1' < \cdots < j_m' \leq k < j_{m+1}' < \cdots < j_k' \leq n$, \cite[Proposition 3]{FF} says that 
$\val_{G^\vee}(\overline{p}_{J'})=\sum_{\ell=1}^{k-m} \val_{G^\vee}(\overline{p}_{J_\ell})$ holds, 
where $J_\ell=([k] \setminus \{j_\ell''\}) \cup \{j_{k+1-\ell}'\}$ for $\ell=1,\dots,k-m$ 
and $\{j_1'',\dots,j_{k-m}''\} = [k] \setminus \{j_1',\dots,j_m'\}$ with $j_1'' < \cdots < j_{k-m}''$. 

\noindent
{\bf (Step 3)}: 
Consider the poset $\{q_{i \times j} \mid 1 \leq i \leq k, 1 \leq j \leq n-k\}$ equipped with the partial orders $q_{i \times j} \prec q_{i \times (j-1)}$ and $q_{i \times j} \prec q_{(i-1) \times j}$. 
This poset is nothing but $\Pi_{k,n}$. 
We show that the valuations $\val_{G^\vee}(\overline{p}_{J'})$ for $J' \in \binom{[n]}{k}$ one-to-one correspond to the antichains in $\Pi_{k,n}$. 

For any $J' \in \binom{[n]}{k}$, work with the same notation as in (Step 2), thus $\val_{G^\vee}(\overline{p}_{J'})=\sum_{\ell=1}^{k-m} \val_{G^\vee}(\overline{p}_{J_\ell})$ holds. 
By assigning each $J_\ell$ to $q_{j_\ell''} \times (j_{k+1-\ell}'-k)$, since $q_{i \times j}$ and $q_{i' \times j'}$ are incomparable in $\Pi_{k,n}$ if and only if $i \geq i'$ and $j < j'$ or $i<i'$ and $j \geq j'$, 
we conclude that $\val_{G^\vee}(\overline{p}_{J'})$ one-to-one corresponds to the antichain $\{x_{j_1'' \times (j_k'-k)},\dots,x_{j_{k-m}'' \times (j_{m+1}'-k)}\}$. 
\end{proof}

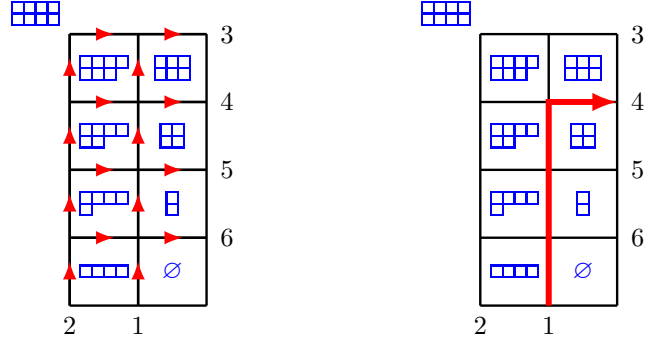
\begin{figure}[H]
\begin{center}
\scalebox{0.9}{
\begin{tikzpicture}
\newcommand{\edgewidth}{0.035cm} 
\ytableausetup{boxsize=4.5pt}

\usetikzlibrary{decorations.markings}
\tikzset{sarrow/.style={decoration={markings,
                               mark=at position .65 with {\arrow[scale=1.3, red]{latex}}},
                               postaction={decorate}}}

\foreach \n/\a/\b in {00/0/0,10/1/0,20/2/0,01/0/1,11/1/1,21/2/1,02/0/2,12/1/2,22/2/2,03/0/3,13/1/3,23/2/3,04/0/4,14/1/4,24/2/4} {
\coordinate (V\n) at (\a,\b); };

\node at (0,0) {
\begin{tikzpicture}
\foreach \s/\t in{00/04,10/14,20/24,00/20,01/21,02/22,03/23,04/24} {
\draw[line width=\edgewidth] (V\s)--(V\t); }
\foreach \s/\t in{00/01,01/02,02/03,03/04,10/11,11/12,12/13,13/14,01/11,11/21,02/12,12/22,03/13,13/23,04/14,14/24} {
\draw[sarrow, line width=\edgewidth] (V\s)--(V\t);} 
\node at (1,-0.3) {$1$}; \node at (0,-0.3) {$2$}; \node at (2.3,4) {$3$}; 
\node at (2.3,3) {$4$}; \node at (2.3,2) {$5$}; \node at (2.3,1) {$6$}; 
\foreach \n/\a/\b in {4/0.5/0.5,41/0.5/1.5,42/0.5/2.5,43/0.5/3.5,11/1.5/1.5,22/1.5/2.5,33/1.5/3.5,44/-0.5/4.3} {
\coordinate (Y\n) at (\a,\b); }; 
\node[blue] at (Y4) {\ydiagram{4}}; \node[blue] at (Y41) {\ydiagram{4,1}}; 
\node[blue] at (Y42) {\ydiagram{4,2}}; \node[blue] at (Y43) {\ydiagram{4,3}}; 
\node[blue] at (Y11) {\ydiagram{1,1}};  \node[blue] at (Y22) {\ydiagram{2,2}}; 
\node[blue] at (Y33) {\ydiagram{3,3}}; \node[blue] at (Y44) {\ydiagram{4,4}}; 
\node[blue] at (1.5,0.5) {$\varnothing$};
\end{tikzpicture}}; 

\node at (6,0) {
\begin{tikzpicture}[myarrow/.style={-latex}]
\foreach \s/\t in{00/04,10/14,20/24,00/20,01/21,02/22,03/23,04/24} {
\draw[line width=\edgewidth] (V\s)--(V\t); }
\node at (1,-0.3) {$1$}; \node at (0,-0.3) {$2$}; \node at (2.3,4) {$3$}; 
\node at (2.3,3) {$4$}; \node at (2.3,2) {$5$}; \node at (2.3,1) {$6$}; 
\foreach \n/\a/\b in {4/0.5/0.5,41/0.5/1.5,42/0.5/2.5,43/0.5/3.5,11/1.5/1.5,22/1.5/2.5,33/1.5/3.5,44/-0.5/4.3} {
\coordinate (Y\n) at (\a,\b); }; 
\node[blue] at (Y4) {\ydiagram{4}}; \node[blue] at (Y41) {\ydiagram{4,1}}; 
\node[blue] at (Y42) {\ydiagram{4,2}}; \node[blue] at (Y43) {\ydiagram{4,3}}; 
\node[blue] at (Y11) {\ydiagram{1,1}};  \node[blue] at (Y22) {\ydiagram{2,2}}; 
\node[blue] at (Y33) {\ydiagram{3,3}}; \node[blue] at (Y44) {\ydiagram{4,4}}; 
\node[blue] at (1.5,0.5) {$\varnothing$};
\draw[myarrow, red, line width=\edgewidth+1.5] (V10)--(V13)--(V23); 
\end{tikzpicture}}; 

\end{tikzpicture}}
\end{center}
\caption{The $4\times 2$ labeled grid (left). 
The lattice path given by a vertical path from $i=1$ followed by a horizontal path ending in $j_k=4$ (right), 
which corresponds to the strongly minimal flow on $(G_{4,6}^{\rm rec})^\vee$ from $\{1,2\}$ to $\{2,4\}$.}
\label{fig_grid_dual}
\end{figure}

\begin{example}
We consider the rectangle plabic graph $G^{\rm rec}_{4,6}$ (the left of Figure~\ref{ex_plabic_dual}) and 
its dual $(G^{\rm rec}_{4,6})^\vee$ (the middle of Figure~\ref{ex_plabic_dual}). 
In particular, $(G^{\rm rec}_{4,6})^\vee$ is of type $\pi_{2,6}$. 
We consider the acyclic perfect orientation of $(G^{\rm rec}_{4,6})^\vee$ as in the right of Figure~\ref{ex_plabic_dual}. 
We note that this corresponds to the grid description as in the left of Figure~\ref{fig_grid_dual}. 

\begin{figure}[H]
\begin{center}
\scalebox{0.8}{
\begin{tikzpicture}[myarrow/.style={black, -latex}]
\newcommand{\boundaryrad}{2cm} 
\newcommand{\noderad}{0.13cm} 
\newcommand{\nodewidth}{0.035cm} 
\newcommand{\edgewidth}{0.035cm} 
\newcommand{\boundarylabel}{8pt} 
\newcommand{\arrowwidth}{0.05cm} 

\ytableausetup{boxsize=4.5pt}

\foreach \n/\a in {1/60, 2/0, 3/300, 4/240, 5/180, 6/120} {
\coordinate (B\n) at (\a:\boundaryrad); 
\coordinate (B\n+) at (\a:\boundaryrad+\boundarylabel); };

\node at (0,0) {
\begin{tikzpicture}
\draw (0,0) circle(\boundaryrad) [gray, line width=\edgewidth];
\foreach \n in {1,...,6} { \draw (B\n+) node {\small $\n$}; }
\foreach \n/\a/\r in {7/60/0.65, 8/0/0.68, 9/330/0.55, 10/300/0.65, 11/260/0.55, 12/230/0.7, 13/180/0.68, 14/120/0.65} {
\coordinate (B\n) at (\a:\r*\boundaryrad); }; 
\foreach \s/\t in {7/8, 8/9, 9/10, 10/11, 11/12, 12/13, 13/14, 14/7, 9/14, 11/14, 
1/7, 2/8, 3/10, 4/12, 5/13, 6/14} {
\draw[line width=\edgewidth] (B\s)--(B\t); 
};
\foreach \x in {7,9,11,13} {
\filldraw [fill=white, line width=\edgewidth] (B\x) circle [radius=\noderad] ;}; 
\foreach \x in {8,10,12,14} {
\filldraw [fill=black, line width=\edgewidth] (B\x) circle [radius=\noderad] ; };
\foreach \n/\a/\r in {1234/210/0.8, 2345/150/0.8, 3456/90/0.75, 1456/30/0.8, 1256/330/0.8, 1236/270/0.8} {
\coordinate (V\n) at (\a:\r*\boundaryrad); }; 
\node[blue] at (V1234) {\ydiagram{2,2,2,2}}; 
\node[blue] at (V2345) {\ydiagram{1,1,1,1}}; 
\node[blue] at (V3456) {$\varnothing$};
\node[blue] at (V1456) {\ydiagram{2}}; 
\node[blue] at (V1256) {\ydiagram{2,2}}; 
\node[blue] at (V1236) {\ydiagram{2,2,2}}; 
\foreach \n/\a/\r in {2456/50/0.35, 2356/280/0.15, 2346/190/0.4} {
\coordinate (V\n) at (\a:\r*\boundaryrad); }; 
\node[blue] at (V2456) {\ydiagram{1}}; 
\node[blue] at (V2356) {\ydiagram{1,1}}; 
\node[blue] at (V2346) {\ydiagram{1,1,1}}; 
\end{tikzpicture}};

\node at (6,0) {
\begin{tikzpicture}
\draw (0,0) circle(\boundaryrad) [gray, line width=\edgewidth];
\draw (B1+) node {\small $3$}; \draw (B2+) node {\small $4$}; 
\draw (B3+) node {\small $5$}; \draw (B4+) node {\small $6$}; 
\draw (B5+) node {\small $1$}; \draw (B6+) node {\small $2$}; 
\foreach \n/\a/\r in {7/60/0.65, 8/0/0.68, 9/330/0.55, 10/300/0.65, 11/260/0.55, 12/230/0.7, 13/180/0.68, 14/120/0.65} {
\coordinate (B\n) at (\a:\r*\boundaryrad); }; 
\foreach \s/\t in {7/8, 8/9, 9/10, 10/11, 11/12, 12/13, 13/14, 14/7, 9/14, 11/14, 
1/7, 2/8, 3/10, 4/12, 5/13, 6/14} {
\draw[line width=\edgewidth] (B\s)--(B\t); 
};
\foreach \x in {7,9,11,13} {
\filldraw [fill=black, line width=\edgewidth] (B\x) circle [radius=\noderad] ;}; 
\foreach \x in {8,10,12,14} {
\filldraw [fill=white, line width=\edgewidth] (B\x) circle [radius=\noderad] ; };
\foreach \n/\a/\r in {34/210/0.8, 45/150/0.8, 56/90/0.75, 16/30/0.8, 12/330/0.8, 23/270/0.8} {
\coordinate (V\n) at (\a:\r*\boundaryrad); }; 
\node[blue] at (V12) {\ydiagram{2,2}}; 
\node[blue] at (V23) {\ydiagram{1,1}}; 
\node[blue] at (V34) {$\varnothing$};
\node[blue] at (V45) {\ydiagram{4}}; 
\node[blue] at (V56) {\ydiagram{4,4}}; 
\node[blue] at (V16) {\ydiagram{3,3}}; 
\foreach \n/\a/\r in {13/50/0.36, 14/290/0.2, 15/190/0.42} {
\coordinate (V\n) at (\a:\r*\boundaryrad); }; 
\node[blue] at (V13) {\ydiagram{4,3}}; 
\node[blue] at (V14) {\ydiagram{4,2}}; 
\node[blue] at (V15) {\ydiagram{4,1}}; 
\end{tikzpicture}};

\node at (12,0) {
\begin{tikzpicture}
\usetikzlibrary{decorations.markings}
\tikzset{sarrow/.style={decoration={markings, 
                               mark=at position .7 with {\arrow[scale=1.3, red]{latex}}},
                               postaction={decorate}}}                               
\tikzset{ssarrow/.style={decoration={markings, 
                               mark=at position .8 with {\arrow[scale=1.3, red]{latex}}},
                               postaction={decorate}}}
\draw (0,0) circle(\boundaryrad) [gray, line width=\edgewidth];
\draw (B1+) node {\small $3$}; \draw (B2+) node {\small $4$}; 
\draw (B3+) node {\small $5$}; \draw (B4+) node {\small $6$}; 
\draw (B5+) node {\small $1$}; \draw (B6+) node {\small $2$}; 
\foreach \n/\a/\r in {7/60/0.65, 8/0/0.68, 9/330/0.55, 10/300/0.65, 11/260/0.55, 12/230/0.7, 13/180/0.68, 14/120/0.65} {
\coordinate (B\n) at (\a:\r*\boundaryrad); }; 
\foreach \s/\t in {8/7, 9/8, 10/9, 11/10, 12/11, 13/12, 14/13, 14/7, 14/9, 14/11, 5/13, 6/14} {
\draw[sarrow, line width=\edgewidth] (B\s)--(B\t); 
};
\foreach \s/\t in {7/1, 8/2, 10/3, 12/4} {
\draw[ssarrow, line width=\edgewidth] (B\s)--(B\t); 
};
\foreach \x in {7,9,11,13} {
\filldraw [fill=black, line width=\edgewidth] (B\x) circle [radius=\noderad] ;}; 
\foreach \x in {8,10,12,14} {
\filldraw [fill=white, line width=\edgewidth] (B\x) circle [radius=\noderad] ; };
\foreach \n/\a/\r in {34/210/0.8, 45/150/0.8, 56/90/0.75, 16/30/0.8, 12/330/0.8, 23/270/0.8} {
\coordinate (V\n) at (\a:\r*\boundaryrad); }; 
\node[blue] at (V12) {\ydiagram{2,2}}; 
\node[blue] at (V23) {\ydiagram{1,1}}; 
\node[blue] at (V34) {$\varnothing$};
\node[blue] at (V45) {\ydiagram{4}}; 
\node[blue] at (V56) {\ydiagram{4,4}}; 
\node[blue] at (V16) {\ydiagram{3,3}}; 
\foreach \n/\a/\r in {13/50/0.36, 14/290/0.2, 15/190/0.42} {
\coordinate (V\n) at (\a:\r*\boundaryrad); }; 
\node[blue] at (V13) {\ydiagram{4,3}}; 
\node[blue] at (V14) {\ydiagram{4,2}}; 
\node[blue] at (V15) {\ydiagram{4,1}}; 
\end{tikzpicture}};

\end{tikzpicture}}
\end{center}
\caption{The rectangle plabic graph $G^{\rm rec}_{4,6}$ (left), the dual $(G^{\rm rec}_{4,6})^\vee$ (middle), and 
the acyclic perfect orientation of $(G^{\rm rec}_{4,6})^\vee$ (right)}
\label{ex_plabic_dual}
\end{figure}
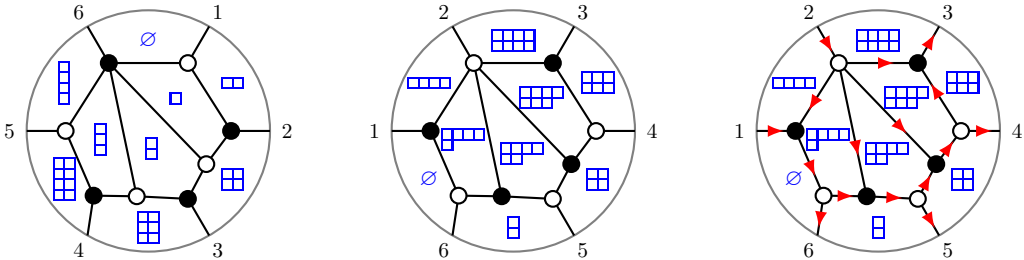

Let $G^\vee=(G^{\rm rec}_{4,6})^\vee$. 
Using this acyclic perfect orientation, we have the valuation $\val_{G^\vee}(\overline{p}_J)$ of the Pl\"{u}cker coordinate $\overline{p}_J$ 
for any $J\in{[6] \choose 2}$ as in the following table. 
By Theorem~\ref{thm_unimodular_FFLV2}, the Newton--Okounkov polytope $\Delta_{G^\vee}$ is a lattice polytope 
unimodularly equivalent to $\calC(\Pi_{2,6})$ via $g$. 
Thus, $\Delta_{G^\vee}$ is the convex hull of the valuations $\val_{G^\vee}(\overline{p}_J)$ for all $J\in{[6] \choose 2}$ given as in the following table.   

\medskip

\begin{center}
\ytableausetup{boxsize=3.5pt}
\scalebox{0.9}{
{\renewcommand\arraystretch{1.1}\tabcolsep= 10pt
\begin{tabular}{c|cccccccc}
&\raise0.5ex\hbox{\ydiagram{4,3}}&\raise0.5ex\hbox{\ydiagram{4,2}}&\raise0.5ex\hbox{\ydiagram{4,1}}&\ydiagram{4}&
\raise0.5ex\hbox{\ydiagram{1,1}}&\raise0.5ex\hbox{\ydiagram{2,2}}&\raise0.5ex\hbox{\ydiagram{3,3}}&\raise0.5ex\hbox{\ydiagram{4,4}} \\ \hline
$\{1,2\}$&0&0&0&0&0&0&0&0 \\ \hline
$\{1,3\}$&0&0&0&0&0&0&0&1 \\ \hline
$\{1,4\}$&1&0&0&0&0&0&1&1 \\ \hline
$\{1,5\}$&1&1&0&0&0&1&1&1 \\ \hline
$\{1,6\}$&1&1&1&0&1&1&1&1 \\ \hline
$\{2,3\}$&1&1&1&1&0&0&0&1 \\ \hline
$\{2,4\}$&1&1&1&1&0&0&1&1 \\ \hline
$\{2,5\}$&1&1&1&1&0&1&1&1 \\ \hline
$\{2,6\}$&1&1&1&1&1&1&1&1 \\ \hline
$\{3,4\}$&1&1&1&1&0&0&1&2 \\ \hline
$\{3,5\}$&1&1&1&1&0&1&1&2 \\ \hline
$\{3,6\}$&1&1&1&1&1&1&1&2 \\ \hline
$\{4,5\}$&2&1&1&1&0&1&2&2 \\ \hline
$\{4,6\}$&2&1&1&1&1&1&2&2 \\ \hline
$\{5,6\}$&2&2&1&1&1&2&2&2 
\end{tabular}}}
\end{center}

\ytableausetup{boxsize=3.5pt}
We apply the transformation $g$ in Theorem~\ref{thm_unimodular_FFLV2}, which is described as 
\begin{align*}
&w_{\varnothing}=v_{\ydiagram{4,4}}\mapsto 0, \hspace{15pt}
w_{1 \times 1}=v_{\ydiagram{4,3}}\mapsto v_{\ydiagram{4,4}}-v_{\ydiagram{4,3}}, \hspace{15pt}
w_{1 \times 2}=v_{\ydiagram{4,2}}\mapsto v_{\ydiagram{4,3}}-v_{\ydiagram{4,2}}, \\
&w_{1 \times 3}=v_{\ydiagram{4,1}}\mapsto v_{\ydiagram{4,2}}-v_{\ydiagram{4,1}}, \hspace{15pt}
w_{1 \times 4}=v_{\ydiagram{4}}\mapsto v_{\ydiagram{4,1}}-v_{\ydiagram{4}}, \\
&w_{2 \times 1}=v_{\ydiagram{3,3}}\mapsto v_{\ydiagram{4,3}}-v_{\ydiagram{3,3}}, \hspace{15pt}
w_{2 \times 2}=v_{\ydiagram{2,2}}\mapsto v_{\ydiagram{4,2}}+v_{\ydiagram{3,3}}-v_{\ydiagram{2,2}}-v_{\ydiagram{4,3}}, \\
&w_{2 \times 3}=v_{\ydiagram{1,1}}\mapsto v_{\ydiagram{4,1}}+v_{\ydiagram{2,2}}-v_{\ydiagram{1,1}}-v_{\ydiagram{4,2}},  \hspace{15pt}
w_{2 \times 4}=v_{\varnothing}\mapsto v_{\ydiagram{4}}+v_{\ydiagram{1,1}}-v_{\varnothing}-v_{\ydiagram{4,1}}, 
\end{align*}
to the valuations, we have the following table. 
Here, each column vector labeled by a Young diagram $\lambda$ stands for the image of $v_\lambda$ under $g$, 
and omit the one corresponding to \raise0.5ex\hbox{\ydiagram{4,4}} because $v_{\ydiagram{4,4}}\mapsto 0$. 
	
\begin{center}
\ytableausetup{boxsize=3.5pt}
\scalebox{0.9}{
{\renewcommand\arraystretch{1.1}\tabcolsep= 10pt
\begin{tabular}{c|cccccccc}
&\raise0.5ex\hbox{\ydiagram{4,3}}&\raise0.5ex\hbox{\ydiagram{4,2}}&\raise0.5ex\hbox{\ydiagram{4,1}}&\ydiagram{4}&
\raise0.5ex\hbox{\ydiagram{1,1}}&\raise0.5ex\hbox{\ydiagram{2,2}}&\raise0.5ex\hbox{\ydiagram{3,3}}&$\varnothing$ \\ \hline
$\{1,2\}$&0&0&0&0&0&0&0&0 \\ \hline
$\{1,3\}$&1&0&0&0&0&0&0&0 \\ \hline
$\{1,4\}$&0&1&0&0&0&0&0&0 \\ \hline
$\{1,5\}$&0&0&1&0&0&0&0&0 \\ \hline
$\{1,6\}$&0&0&0&1&0&0&0&0 \\ \hline
$\{2,3\}$&0&0&0&0&1&0&0&0 \\ \hline
$\{2,4\}$&0&0&0&0&0&1&0&0 \\ \hline
$\{2,5\}$&0&0&0&0&0&0&1&0 \\ \hline
$\{2,6\}$&0&0&0&0&0&0&0&1 \\ \hline
$\{3,4\}$&1&0&0&0&0&1&0&0 \\ \hline
$\{3,5\}$&1&0&0&0&0&0&1&0 \\ \hline
$\{3,6\}$&1&0&0&0&0&0&0&1 \\ \hline
$\{4,5\}$&0&1&0&0&0&0&1&0 \\ \hline
$\{4,6\}$&0&1&0&0&0&0&0&1 \\ \hline
$\{5,6\}$&0&0&1&0&0&0&0&1 
\end{tabular}}}
\end{center}

We observe that these vectors correspond to the antichains of the poset $\{q_{i \times j} \mid 1 \leq i \leq 2, 1 \leq j \leq 4\}$ 
equipped with the partial orders $q_{i \times j} \prec q_{i \times (j-1)}$ and $q_{i \times j} \prec q_{(i-1) \times j}$ as showed in (Step 3) of the proof of Theorem~\ref{thm_unimodular_FFLV2}. 
\end{example}

As we mentioned, the polytopes $\calO(\Pi_{k,n})$ and $\calC(\Pi_{k,n})$ are connected by the transfer map $\phi$. 
It was shown in \cite[Theorem~4.1]{Hig} that $\phi$ can be described as the composition of the combinatorial mutations defined along the poset $\Pi_{k,n}$. 

\begin{theorem}[{cf. \cite[Theorem~4.1 and its proof]{Hig}}]
\label{thm_transfer_mutation}
Let $G=G^{\rec}_{k,n}$. 
We consider the set of Young diagrams $\calP_G$, which is isomorphic to $\Pi_{k,n}$ as a poset. 
For $\lambda_i\in\calP_G$, let $w_i\coloneqq -\bfe_{\lambda_i}$. 
We define the lattice polytope 
\[H_{\lambda_i}\coloneqq\Conv\{-\bfe_{\lambda_j}\mid \text{$\lambda_i$ {\rm covers} $\lambda_j$}\},\]
which satisfies $H_{\lambda_i}\subset w_i^\perp$, 
and the piecewise-linear map 
\[\varphi_{\lambda_i}\coloneqq\varphi_{w_i,H_{\lambda_i}}\colon \RR^{\calP_G}\rightarrow\RR^{\calP_G}\]
as in \eqref{eq_piecewise-linear}. 
Let $\lambda_1,\dots, \lambda_{s-1}$ be all the non-minimal elements in $\calP_G$ arranged such as 
$\lambda_i\prec\lambda_j$ only if $i>j$. 
Then the transfer map $\phi$ can be described as the composition of the piecewise-linear maps: 
\[
\phi=\varphi_{\lambda_{s-1}}\circ\cdots\circ\varphi_{\lambda_2}\circ\varphi_{\lambda_1}. 
\]
Moreover, for any $i=1, \dots, s-1$, we see that $\varphi_{\lambda_i}\circ\cdots\circ\varphi_{\lambda_1}\big(\calO(\Pi_{k,n})\big)$ is 
the combinatorial mutation of $\varphi_{\lambda_{i-1}}\circ\cdots\circ\varphi_{\lambda_1}\big(\calO(\Pi_{k,n})\big)$ with respect to $w_i$ and $H_{\lambda_i}$. 
\end{theorem}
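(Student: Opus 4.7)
The plan is to unpack $\varphi_{\lambda_i}=\varphi_{w_i,H_{\lambda_i}}$ in coordinates, use the chosen linear extension to make the composition telescope into the transfer map $\phi$, and then verify the convexity of each intermediate image.

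First I would compute $\varphi_{\lambda_i}$ coordinate-wise. For $v\in\RR^{\calP_G}$,
\begin{equation*}
v_{\min}=\min\{\langle v,u\rangle \mid u\in H_{\lambda_i}\}=-\max\{v_{\lambda_j} \mid \lambda_i\text{ covers }\lambda_j\},
\end{equation*}
so $\varphi_{\lambda_i}(v)=v-v_{\min}w_i=v+v_{\min}\bfe_{\lambda_i}$ fixes every coordinate except the $\lambda_i$-th, which is sent to
\begin{equation*}
v_{\lambda_i}\,\longmapsto\,v_{\lambda_i}-\max\{v_{\lambda_j}\mid\lambda_i\text{ covers }\lambda_j\}=\min\{v_{\lambda_i}-v_{\lambda_j}\mid\lambda_i\text{ covers }\lambda_j\},
\end{equation*}
which is exactly the $\lambda_i$-th component of $\phi$.

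Next I would exploit the ordering. The hypothesis $\lambda_i\prec\lambda_j\Rightarrow i>j$ means that whenever $\lambda_i$ covers $\lambda_j$ one has $j>i$; hence at step $i$ the coordinates $v_{\lambda_j}$ of the covers of $\lambda_i$ still carry their original values. Moreover each $\varphi_{\lambda_k}$ with $k<i$ only altered its own coordinate, and no subsequent $\varphi_{\lambda_k}$ touches the $\lambda_i$-coordinate, so the $\lambda_i$-coordinate is updated at step $i$ and then frozen. Consequently, after all $s-1$ steps, the $\lambda_i$-component of $(\varphi_{\lambda_{s-1}}\circ\cdots\circ\varphi_{\lambda_1})(v)$ equals $\min\{v_{\lambda_i}-v_{\lambda_j}\mid\lambda_i\text{ covers }\lambda_j\}$ in the original coordinates, matching $\phi$ on non-minimal vertices, while minimal elements are never mutated and are also preserved by $\phi$. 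This yields the identity $\phi=\varphi_{\lambda_{s-1}}\circ\cdots\circ\varphi_{\lambda_1}$.

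The main obstacle is the convexity of each intermediate image $P_i\coloneqq\varphi_{\lambda_i}\circ\cdots\circ\varphi_{\lambda_1}(\calO(\Pi_{k,n}))$, which is required for $P_i$ to qualify as a combinatorial mutation of $P_{i-1}$ in the sense of Definition~\ref{def_mutation_polygonM}. I would argue by induction on $i$, maintaining an explicit inequality description of $P_i$ that interpolates between the order and chain polytopes along the linear extension: for the processed indices $k\le i$, the coordinates $x_{\lambda_k}$ satisfy chain-type inequalities along every chain lying inside $\{\lambda_k\}_{k\le i}$, while the unprocessed coordinates and their relations to the processed ones retain a transferred order-polytope form. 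Because $\{\lambda_k\}_{k\le i}$ is an order filter by the choice of linear extension, the piecewise-linear ambiguity of $\varphi_{\lambda_i}$ is tamed: on each chamber $\{v_{\lambda_j}=\max\}$ (as $\lambda_j$ runs over the covers of $\lambda_i$ in $\calP_G$) the map acts as a unimodular shear, and the resulting pieces glue along the hyperplanes $\{v_{\lambda_j}=v_{\lambda_{j'}}\}$ thanks to the comparability inequalities $v_{\lambda_j}\le v_{\lambda_i}$ already present in $P_{i-1}$. This produces the convexity of $P_i$ and identifies it as the combinatorial mutation of $P_{i-1}$ with respect to $w_i$ and $H_{\lambda_i}$; iterating to $i=s-1$ recovers Stanley's identity $\phi(\calO(\Pi_{k,n}))=\calC(\Pi_{k,n})$ and completes the argument.
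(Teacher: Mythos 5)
Your coordinate computation of $\varphi_{\lambda_i}$ and the telescoping argument are correct and are exactly the mechanism behind the cited proof: since $w_i=-\bfe_{\lambda_i}$ and $H_{\lambda_i}=\Conv\{-\bfe_{\lambda_j}\mid \lambda_i \text{ covers } \lambda_j\}$, the map $\varphi_{\lambda_i}$ rewrites only the $\lambda_i$-coordinate as $\min\{v_{\lambda_i}-v_{\lambda_j}\mid \lambda_i \text{ covers }\lambda_j\}$, and the chosen linear extension (larger elements first) guarantees that each coordinate is written exactly once and only read before being written, so the composition is the transfer map $\phi$. Your observation that the processed indices $\{\lambda_k\}_{k\le i}$ form a filter is also correct and relevant. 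This disposes of the first assertion of the theorem.

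The gap is in the second assertion, the convexity of each intermediate image $P_i=\varphi_{\lambda_i}\circ\cdots\circ\varphi_{\lambda_1}(\calO(\Pi_{k,n}))$, which is what makes each step a genuine combinatorial mutation in the sense of Definition~\ref{def_mutation_polygonM}. Your justification --- that on each chamber $\varphi_{\lambda_i}$ is a unimodular shear and the pieces ``glue along the hyperplanes'' --- does not imply convexity of the union: a piecewise-linear map always glues its linear pieces continuously along the walls of its domains of linearity, yet the image of a polytope can fail to be convex. Indeed, this paper's own Example~\ref{ex_not_convex} exhibits exactly this failure for a map of the same type ($\varphi_{w,F}$ applied to a Newton--Okounkov polytope), so continuity of the gluing plus comparability inequalities cannot be the reason convexity holds here; something special about the order-polytope structure must be used. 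Your stated strategy of maintaining an explicit $H$-description of $P_i$ interpolating between $\calO(\Pi_{k,n})$ and $\calC(\Pi_{k,n})$ is the right one (the intermediate bodies are the marked chain--order polytopes of Fang--Fourier, with chain part the processed filter $\{\lambda_k\}_{k\le i}$), but the proposal never writes down these inequalities nor verifies the two inclusions $\varphi_{\lambda_i}(P_{i-1})\subseteq P_i$ and $P_i\subseteq \varphi_{\lambda_i}(P_{i-1})$. That verification is the substantive content of the proof in \cite[Theorem~4.1]{Hig}; as it stands, your argument proves the identity $\phi=\varphi_{\lambda_{s-1}}\circ\cdots\circ\varphi_{\lambda_1}$ but not that each factor acts on the intermediate polytopes as a combinatorial mutation.
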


\begin{example}
\ytableausetup{boxsize=3.5pt}
We consider the GT polytope: 
\[
\calO(\Pi_{2,6})=\{ (x_\lambda) \in \RR^{\calP_{G^{\rm rec}_{2,6}}} 
\mid x_{\lambda} \leq x_{\lambda^\prime} \text{ if }\lambda \preceq \lambda^\prime \text{ in }\calP_{G^{\rm rec}_{2,6}}, 
\; 0 \leq x_\lambda \leq 1 \text{\hspace{5pt}for all } \lambda \in \calP_{G^{\rm rec}_{2,6}}\}.
\]
We will apply the combinatorial mutations $\varphi_{\lambda_i}$ given in Theorem~\ref{thm_transfer_mutation} 
along the poset structure on $\calP_{G^{\rm rec}_{2,6}}$ (see Example~\ref{ex_poset_Gr(2,6)}). 
Since the largest element in $\calP_{G^{\rm rec}_{2,6}}$ is \raise0.5ex\hbox{\ydiagram{4,4}}, 
we first consider $\varphi_{\ydiagram{4,4}}(\calO(\Pi_{2,6}))$. 
Next, \raise0.5ex\hbox{\ydiagram{3,3}} and \raise0.1ex\hbox{\ydiagram{4}} are covered by \raise0.6ex\hbox{\ydiagram{4,4}}. 
Thus, we will apply $\varphi_{\ydiagram{3,3}}$ and $\varphi_{\ydiagram{4}}$. 
In this situation, we do not care of the order of applications of these maps, 
because $\varphi_{\ydiagram{3,3}}\circ\varphi_{\ydiagram{4}}=\varphi_{\ydiagram{4}}\circ\varphi_{\ydiagram{3,3}}$. 
We note that since \raise0.1ex\hbox{\ydiagram{4}} covers only \raise0.1ex\hbox{\ydiagram{3}}, the map $\varphi_{\ydiagram{4}}$ is 
clearly a unimodular transformation. 
Repeating these arguments, we apply the sequence of combinatorial mutations: 
\begin{equation}
\label{eq_GT_mutation_FFLV}
\varphi_{\ydiagram{1}}\circ\varphi_{\ydiagram{2}}\circ\varphi_{\ydiagram{1,1}}\circ\varphi_{\ydiagram{3}}\circ\varphi_{\ydiagram{2,2}}\circ\varphi_{\ydiagram{4}}\circ\varphi_{\ydiagram{3,3}}\circ\varphi_{\ydiagram{4,4}}
\end{equation}
to $\calO(\Pi_{2,6})$. 
Here, the maps $\varphi_{\ydiagram{2}}, \varphi_{\ydiagram{1,1}}, \varphi_{\ydiagram{3}}, \varphi_{\ydiagram{4}}$ are 
unimodular transformations, not piecewise-linear transformations, since each mentioned Young diagram covers only one diagram. 
Also, the map $\varphi_{\ydiagram{1}}$ is the identity map. 
By Theorem~\ref{thm_transfer_mutation}, the map \eqref{eq_GT_mutation_FFLV} coincides with the transfer map $\phi$. 
Thus, applying the combinatorial mutations \eqref{eq_GT_mutation_FFLV} to $\calO(\Pi_{2,6})$, we have the FFLV polytope $\calC(\Pi_{2,6})$. 
\end{example}

As we saw in Proposition~\ref{prop_move_equiv}, all plabic graphs of type $\pi_{k,n}$ are move-equivalent. 
Thus, there is a sequence $\bfs$ of square moves connecting $G^{\rm rec}_{k,n}$ and $(G^{\rm rec}_{k,n})^\vee$. 
Since a square move of a plabic graph induces a tropicalized cluster mutation of the associated Newton--Okounkov polytope (see Theorem~\ref{thm_trop_map_NO}), 
we have the sequence of tropicalized cluster mutations induced by $\bfs$ that transforms $\Delta_{G^{\rm rec}_{k,n}}$ into $\Delta_{(G^{\rm rec}_{n-k,n})^\vee}$.
Furthermore, it can be described by combining the unimodular transformations and the combinatorial mutations as we showed in Proposition~\ref{prop_tropical=combmutation}. 
By Proposition~\ref{prop_unimodular_GT} and Theorem~\ref{thm_unimodular_FFLV2}, 
$\Delta_{G^{\rm rec}_{k,n}}$ and $\Delta_{(G^{\rm rec}_{n-k,n})^\vee}$ are respectively unimodularly equivalent to $\calO(\Pi_{k,n})$ and $\calC(\Pi_{k,n})$ 
via $f$ and $g$. 
Thus, combining Proposition~\ref{prop_unimodular_GT}, Theorems~\ref{thm_unimodular_FFLV2} and \ref{thm_transfer_mutation}, 
we have another sequence of unimodular transformations and combinatorial mutations 
that connects $\Delta_{G^{\rm rec}_{k,n}}$ and $\Delta_{(G^{\rm rec}_{n-k,n})^\vee}$: 
\begin{equation}
\label{seq_transfer_unimodular}
\Delta_{G^{\rm rec}_{k,n}}\xrightarrow{f} \calO(\Pi_{k,n})
\xrightarrow{\phi\,=\,\varphi_{\lambda_{s-1}}\,\circ\,\cdots\,\circ\,\varphi_{\lambda_2}\,\circ\,\varphi_{\lambda_1}}
\calC(\Pi_{k,n})\xrightarrow{g^{-1}}\Delta_{(G^{\rm rec}_{n-k,n})^\vee}.
\end{equation}

\begin{remark}
As noted in \cite[Remark~1]{FF}, 
by changing the numbering of the boundary vertices of $G^{\rm rec}_{k,n}$ as $r\mapsto n+1-r$ (modulo $n$) for $r=1,\dots,n$, 
we have another plabic graph $G^\prime$ whose associated Newton--Okounkov polytope is unimodularly equivalent to $\calC(\Pi_{k,n})$. 
This plabic graph $G^\prime$ can be obtained from $G^{\rm rec}_{k,n}$ by applying a special sequence of mutations 
called a \emph{maximal green sequence} (cf. \cite{Kel,DK}). 
Note that maximal green sequences for a rectangle plabic graph is well studied in \cite[Section~11]{MaSc}. 
Thus the sequence of tropicalized cluster mutations induced from this maximal green sequence connects $\Delta_{G^{\rm rec}_{k,n}}$ and 
$\Delta_{(G^{\rm rec}_{n-k,n})^\vee}$, and hence so does $\calO(\Pi_{k,n})$ and $\calC(\Pi_{k,n})$ via $f,g^{-1}$. 
However, it is different from the sequence \eqref{seq_transfer_unimodular} in general. 
\end{remark}

\section{Remarks on tropicalization with respect to the maximum convention} 
\label{sec_max_tropical}

The tropicalized cluster mutation $\Psi_{G,G^\prime}$ discussed in Section~\ref{sec_tropical_mutation} 
corresponds to the tropicalization of the exchange relation of cluster variables. 
This tropicalization uses the ``minimum convention", whereas we can also consider the ``maximum convention". 
As noted in \cite[Remark~5.11]{EH}, the tropicalization with respect to the maximum convention also appears in the context of 
the wall-crossing of the Newton--Okounkov polytopes associated to maximal prime cone of the tropical Grassmannian. 
In the first part of this section, we observe the relationship between the wall crossing map and the tropicalization with respect to the maximum convention. 
In the later half, we show two kinds of tropicalizations are related via the piecewise-linear map used in Proposition~\ref{prop_tropical=combmutation}. 

\medskip

First, we give a brief review of the tropical Grassmannian and its wall-crossing phenomenon following \cite{EH,SS}. 
As we noted in Section~\ref{sec_intro}, tropical geometry gives an effective method to construct toric degenerations of $\Gr(k, n)$. 
The \emph{tropical Grassmannian} $\Trop(\Gr(k, n))$ is the tropicalization of the Pl\"{u}cker ideal $I_{k,n}$ for $\XX_{k,n}=\Gr(k, n)$ (cf. \cite{SS}). 
Namely, $\Trop(\Gr(k, n))$ is the set of weight vectors $\bfw\in\RR^{n \choose k}$ such that 
the initial ideal ${\rm in}_\bfw(I_{k,n})$ does not contain a monomial. 
The tropical Grassmannian $\Trop(\Gr(k, n))$ is a subfan of the Gr\"{o}bner fan of $I_{k,n}$, 
and any maximal cone of  has the dimension $s+1=k(n-k)+1$ \cite[Corollary~3.1]{SS}. 
Also, weight vectors contained in the same cone $C$ of $\Trop(\Gr(k, n))$ give the same initial ideal, hence we may write this as ${\rm in}_C(I_{k,n})$. 
If the initial ideal ${\rm in}_C(I_{k,n})$ associated to a maximal cone $C$ is prime and binomial, 
then we have a toric degeneration of $\XX_{k,n}$ via Gr\"{o}bner theory (e.g., \cite[Theorem~15.17]{Eis}). 
Also, by using the method in \cite{KM}, we can construct the Newton--Okounkov polytopes associated to a maximal prime cone $C$, 
which gives the same toric degeneration as the one obtained from $C$ via Gr\"{o}bner theory. 

We turn our attention to the case of $k=2$ (see \cite[Section~5]{EH} for more details). 
It was shown in \cite[Corollary~4.4]{SS} that any initial ideal ${\rm in}_C(I_{2,n})$ associated to a maximal cone $C$ in $\Trop(\Gr(2, n))$ 
is prime and binomial. 
In this case, for any pair of adjacent maximal prime cones $C_1,C_2$ in $\Trop(\Gr(2, n))$, 
we can consider two kinds of the (\emph{geometric}) \emph{wall-crossing maps} (called the \emph{flip} and \emph{shift maps}) introduced in \cite{EH}  
for relating the Newton--Okounkov polytopes associated to $C_1$ and $C_2$. 
It is known that the maximal prime cones of $\Trop(\Gr(2, n))$ are parametrized by \emph{labeled trivalent trees} with $n$ leaves (see \cite[Section~4]{SS}), 
and labeled trivalent trees corresponding to the adjacent maximal prime cones are related by the \emph{mutation of a labeled trivalent tree} (see \cite[Section~6]{BFFHL}). 
Thus, the wall-crossing maps can be understood in terms of labeled trivalent trees and their mutations. 
Also, a labeled trivalent tree can be obtained from a triangulation of the $n$-gon (see \cite[Section~4]{BFFHL}). 
For such a triangulation, we can associate a plabic graph of type $\pi_{2,n}$ (see \cite{KW}). 
In this situation, the mutation of a labeled trivalent tree, the ``flip" of a diagonal in a triangulation, 
and the square move of a plabic graph, are equivalent to one another. 
Thus, we can observe the wall-crossing maps in terms of a plabic graph. 

\medskip

We now introduce two piecewise-linear maps $\sfF, \sfS$. 
Let $G$ be a plabic graph of type $\pi_{k,n}$. 
We label the coordinates of the ambient space $\RR^{s+1}$ by the Young diagrams in $\widetilde{\calP}_G$ where $s=k(n-k)$. 
Then we define 
\[
\sfF\colon\RR^{s+1}\rightarrow\RR^{s+1}, \quad 
(v_{\lambda_1},\dots,v_{\lambda_{s+1}})\mapsto  (v_{\lambda_1},\dots,v_{\lambda_{i-1}},v^\prime_{\lambda_i},v_{\lambda_{i+1}},\dots,v_{\lambda_{s+1}}), 
\]
where 
\begin{equation}
\label{eq_F}
v^\prime_{\lambda_i}=-v_{\lambda_i}+\min\{v_{\lambda_a}+v_{\lambda_d}, v_{\lambda_b}+v_{\lambda_c}\}
+\max\{|v_{\lambda_a}-v_{\lambda_b}|, |v_{\lambda_c}-v_{\lambda_d}|\},
\end{equation}

\[
\sfS\colon\RR^{s+1}\rightarrow\RR^{s+1}, \quad 
(v_{\lambda_1},\dots,v_{\lambda_{s+1}})\mapsto  (v_{\lambda_1},\dots,v_{\lambda_{i-1}},v^{\prime\prime}_{\lambda_i},v_{\lambda_{i+1}},\dots,v_{\lambda_{s+1}}), 
\]
where 
\begin{equation}
\label{eq_S}
v^{\prime\prime}_{\lambda_i}=v_{\lambda_i}+\max\{|v_{\lambda_a}-v_{\lambda_d}|, |v_{\lambda_b}-v_{\lambda_c}|\}
-\max\{|v_{\lambda_a}-v_{\lambda_b}|, |v_{\lambda_c}-v_{\lambda_d}|\}, 
\end{equation}
and $\lambda_a, \lambda_b, \lambda_c, \lambda_d$ are the faces of $G$ located cyclically around 
a square face $\lambda_i$ as in Lemma~\ref{lem_simplify_FS} below. 

For the case $k=2$, these maps $\sfF, \sfS$ coincide with the ones given in \cite[Subsection~5.3]{EH}, 
and $\sfS$ coincides with the piecewise-linear map given in \cite[Proposition~3.5]{NU}. 
These maps relate the Newton--Okounkov polytopes associated to adjacent maximal prime cones in $\Trop(\Gr(2, n))$ \cite[Section~5]{EH}, 
and this wall-crossing phenomenon can also be understood from the viewpoint of the combinatorial mutations \cite[Appendix]{EH}. 
As observed in \cite[Remark~5.11]{EH} and \cite[Corollary~4.64]{BMN}, this wall-crossing is related to the tropicalization of the cluster mutation. 

In what follows, we apply these maps $\sfF, \sfS$ to the valuation (and the Newton--Okounkov polytope) arising from a plabic graph, 
in which case we have the tropicalization of the cluster mutation with respect to the maximum convention as noted in \cite[Remark~5.11]{EH}. 
We first observe that $\sfF$ and $\sfS$ can be simplified as in Proposition~\ref{prop_simplify_FS} below when they are applied to the valuation. 
The key is the following lemma. 

\begin{lemma}
\label{lem_simplify_FS}
Let $G$ be a plabic graph of type $\pi_{k,n}$. 
Let $\nu^J\coloneqq\val_{G}(\overline{p}_J)\in\ZZ^{\widetilde{\calP}_G}$ for any Pl\"{u}cker coordinate $\overline{p}_J$ in $A_{k,n}$. 
We denote by $\nu_\lambda^J$ the component of $\nu^J$ corresponding to a face $\lambda\in\widetilde{\calP}_G$. 
We suppose that $\lambda_i\in\widetilde{\calP}_G$ is a square face, 
and $\lambda_a, \lambda_b, \lambda_c, \lambda_d\in\widetilde{\calP}_G$ are faces locating cyclically around $\lambda_i$ 
as in the figure below. 
\begin{center}
\scalebox{1}{
\begin{tikzpicture}
\newcommand{\noderad}{0.1cm} 
\newcommand{\nodewidth}{0.03cm} 
\newcommand{\edgewidth}{0.03cm} 

\foreach \n/\a/\b in {A/0/0, B/1/0, C/1/1, D/0/1} {
\coordinate (\n) at (\a,\b); } 
\foreach \n/\a in {A/225, B/315, C/45, D/135} {
\path (\n) ++(\a:0.7) coordinate (\n+); }

\foreach \s/\t in {A/B, B/C, C/D, D/A, A/A+, B/B+, C/C+, D/D+}{
\draw[line width=\edgewidth] (\s)--(\t); }

\filldraw [fill=black, line width=\nodewidth] (A) circle [radius=\noderad] ;
\filldraw [fill=white, line width=\nodewidth] (B) circle [radius=\noderad] ; 
\filldraw [fill=black, line width=\nodewidth] (C) circle [radius=\noderad] ;
\filldraw [fill=white, line width=\nodewidth] (D) circle [radius=\noderad] ; 

\node[blue] (V) at (0.5,0.5) {\small $\lambda_i$} ;
\node[blue] (V1) at (1.5,0.5) {\small $\lambda_a$} ;\node[blue] (V2) at (0.5,1.5) {\small $\lambda_b$} ;
\node[blue] (V3) at (-0.5,0.5) {\small $\lambda_c$} ;\node[blue] (V4) at (0.5,-0.5) {\small $\lambda_d$} ;
\end{tikzpicture}
}
\end{center}
Then (after rotating the labels $a,b,c,d$ if necessary) we have 
\begin{equation}
\label{eq_compare_valuation}
\nu_{\lambda_a}^J\ge\nu_{\lambda_b}^J\ge\nu_{\lambda_c}^J \quad\text{and}\quad \nu_{\lambda_a}^J\ge\nu_{\lambda_d}^J\ge\nu_{\lambda_c}^J
\end{equation}
for any $J\in{[n] \choose k}$. 
\end{lemma}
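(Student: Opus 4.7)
The plan is to interpret $\nu_\lambda^J$ as the number of paths in the strongly minimal flow $F_{\min}$ from $[k]$ to $J$ whose left side contains $\lambda$, and to reduce \eqref{eq_compare_valuation} to a pointwise statement about how a single directed path can pass near the square face $\lambda_i$. This interpretation follows directly from $\val_G(\overline{p}_J)=\wt(F_{\min})$ and the definition of the weight of a flow as the product, over paths, of the $x_\lambda$ for faces on the left.

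First, I would fix the labels globally using the local form of the acyclic orientation $\calO$ with $I_\calO=[k]$. In Figure~\ref{fig_acyclic_PO}, one corner of the square ($D$) is a \emph{source} (no incoming internal edge), the diagonally opposite corner ($A$) is a \emph{sink} (no outgoing internal edge), and the remaining corners $B,C$ are intermediate. I choose the cyclic rotation so that $\lambda_c$ is the face incident to both $D$ and $A$, $\lambda_a$ is the opposite face incident to $B$ and $C$, and $\lambda_b,\lambda_d$ are the other two. This assignment depends only on $G$ and $\calO$, so it is the rotation permitted by the statement and is uniform in $J$.

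Next, I would classify the ways a single self-avoiding directed path $w$ can meet the square region. Either $w$ avoids all of $A,B,C,D$ entirely, in which case $w$ uses none of the four external edges incident to the square, and the four faces $\lambda_a,\lambda_b,\lambda_c,\lambda_d$ all lie on the same side of $w$; or $w$ uses at least one internal edge of $\lambda_i$, in which case the valency conditions imposed by $\calO$ force $w$ to enter the region through an incoming external edge at $D$ or $C$ and to exit through an outgoing external edge at $A$ or $B$, giving exactly five trajectories along the boundary or interior diagonal of the square. For each of the five, computing which of $\{\lambda_a,\lambda_b,\lambda_c,\lambda_d\}$ lies on the left of $w$ yields
\[
\{\lambda_a,\lambda_b,\lambda_d\},\quad \{\lambda_a,\lambda_b,\lambda_d\},\quad \{\lambda_a,\lambda_b\},\quad \{\lambda_a,\lambda_d\},\quad \{\lambda_a\},
\]
so $\lambda_a$ is always on the left and $\lambda_c$ is always on the right whenever $w$ touches the square. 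Writing $L_w$ for the set of faces lying on the left of $w$, the two cases combined yield, for every single path,
\[
\lambda_c\in L_w \ \Longrightarrow\ \lambda_a,\lambda_b,\lambda_d\in L_w,\qquad \lambda_b\in L_w \text{ or } \lambda_d\in L_w \ \Longrightarrow\ \lambda_a\in L_w.
\]

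Finally, summing these pointwise implications over the vertex-disjoint paths of $F_{\min}$ gives $\nu_{\lambda_a}^J\ge\nu_{\lambda_b}^J\ge\nu_{\lambda_c}^J$ and $\nu_{\lambda_a}^J\ge\nu_{\lambda_d}^J\ge\nu_{\lambda_c}^J$ for every $J\in{[n] \choose k}$. The main obstacle is the exhaustive local case analysis: one must enumerate the five trajectories through the square, verify each claimed left-set by tracking the path segment by segment, and argue in the non-touching case that no external edge bordering the square is crossed, so that the four surrounding faces indeed end up on the same side of $w$. Once these bookkeeping checks are in place, the pairing of $(\lambda_a,\lambda_c)$ with the source-sink diagonal of the square makes the rotation choice canonical and independent of $J$.
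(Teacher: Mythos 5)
Your proposal is correct in substance and follows essentially the same route as the paper's proof: both interpret $\nu^J_\lambda$ as the number of paths of the strongly minimal flow $F_{\min}$ having $\lambda$ on their left, fix the local form of the acyclic perfect orientation with $I_\calO=[k]$ around the square face, enumerate the possible local trajectories, and sum the resulting pointwise implications over the vertex-disjoint paths. Your five left-sets, and the two implications ($\lambda_b$ or $\lambda_d$ on the left forces $\lambda_a$ on the left; $\lambda_c$ on the left forces $\lambda_b,\lambda_d$ on the left), are exactly what the paper extracts from its six local pictures; the only cosmetic difference is that the paper draws configurations of the whole flow (one of which contains two simultaneous paths), while you enumerate single-path trajectories, which is equivalent since the implications are applied path by path.

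One factual slip should be corrected: in Figure~\ref{fig_acyclic_PO} the source $D$ and the sink $A$ of the square are \emph{adjacent} corners, joined by the oriented side $D\to A$, which borders $\lambda_c$; they are not diagonally opposite, and there is no ``interior diagonal'' --- all five trajectories run along sides of the square. This does not damage your argument, because your rotation rule is stated via incidence ($\lambda_c$ incident to both $D$ and $A$, $\lambda_a$ incident to both $B$ and $C$), which already presupposes that $D$ and $A$ are adjacent and matches the paper's configuration, and the five left-sets you list are the correct ones for that configuration. But as written the description is internally inconsistent, so rephrase the labeling step accordingly.
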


\begin{proof}
To observe the valuations defined from $G$, we consider a unique acyclic perfect orientation 
$\calO$ such that $I_\calO=[k]=\{1,\dots, k\}$, discussed in Subsection~\ref{subsec_prelim_NO}. 
Up to rotations, this orientation $\calO$ takes the following form around any square face in $G$. 

\begin{center}
\scalebox{1}{
\begin{tikzpicture}
\newcommand{\noderad}{0.1cm} 
\newcommand{\nodewidth}{0.03cm} 
\newcommand{\edgewidth}{0.03cm} 

\usetikzlibrary{decorations.markings}
\tikzset{sarrow/.style={decoration={markings,
                               mark=at position .65 with {\arrow[scale=1.3, red]{latex}}},
                               postaction={decorate}}}
                  
 \tikzset{ssarrow/.style={decoration={markings,
                               mark=at position .8 with {\arrow[scale=1.3, red]{latex}}},
                               postaction={decorate}}}

\foreach \n/\a/\b in {A/0/0, B/1/0, C/1/1, D/0/1} {
\coordinate (\n) at (\a,\b); } 
\foreach \n/\a in {A/225, B/315, C/45, D/135} {
\path (\n) ++(\a:0.7) coordinate (\n+); }

\draw[sarrow, line width=\edgewidth] (B)--(A); \draw[sarrow, line width=\edgewidth] (C)--(B); 
\draw[sarrow, line width=\edgewidth] (D)--(C); \draw[sarrow, line width=\edgewidth] (D)--(A); 

\draw[ssarrow, line width=\edgewidth] (A)--(A+); \draw[ssarrow, line width=\edgewidth] (B)--(B+); 
\draw[sarrow, line width=\edgewidth] (C+)--(C); \draw[sarrow, line width=\edgewidth] (D+)--(D); 

\filldraw [fill=black, line width=\nodewidth] (A) circle [radius=\noderad] ;
\filldraw [fill=white, line width=\nodewidth] (B) circle [radius=\noderad] ; 
\filldraw [fill=black, line width=\nodewidth] (C) circle [radius=\noderad] ;
\filldraw [fill=white, line width=\nodewidth] (D) circle [radius=\noderad] ; 

\node[blue] (V) at (0.5,0.5) {\small $\lambda_i$} ;
\node[blue] (V1) at (1.5,0.5) {\small $\lambda_a$} ;\node[blue] (V2) at (0.5,1.5) {\small $\lambda_b$} ;
\node[blue] (V3) at (-0.5,0.5) {\small $\lambda_c$} ;\node[blue] (V4) at (0.5,-0.5) {\small $\lambda_d$} ;
\end{tikzpicture}}
\end{center}

We observe how to behave the strongly minimal flow $F_{\min}$ from $\{1,\dots,k\}$ to $J$ which satisfies $\nu^J=\wt(F_{\min})$ around the face $\lambda_i$. 
When we restrict the flow $F_{\min}$ to the neighborhood of $\lambda_i$, it takes one of the following forms (cf. \cite[The proof of Theorem~13.1]{RW}): 

\begin{center}
\scalebox{1}{
\begin{tikzpicture}[myarrow/.style={black, -latex}]
\newcommand{\noderad}{0.1cm} 
\newcommand{\nodewidth}{0.03cm} 
\newcommand{\edgewidth}{0.03cm} 
\newcommand{\flowwidth}{0.08cm} 
\newcommand{\graphscale}{0.9} 

\foreach \n/\a/\b in {A/0/0, B/1/0, C/1/1, D/0/1} {
\coordinate (\n) at (\a,\b); } 
\foreach \n/\a in {A/225, B/315, C/45, D/135} {
\path (\n) ++(\a:0.7) coordinate (\n+); }

\node at (0,0){
\scalebox{\graphscale}{
\begin{tikzpicture}
\foreach \s/\t in {A/B,B/C,C/D,D/A,A/A+,B/B+,C/C+,D/D+} {
\draw[line width=\edgewidth] (\s)--(\t); }

\filldraw [fill=black, line width=\nodewidth] (A) circle [radius=\noderad] ;
\filldraw [fill=white, line width=\nodewidth] (B) circle [radius=\noderad] ; 
\filldraw [fill=black, line width=\nodewidth] (C) circle [radius=\noderad] ;
\filldraw [fill=white, line width=\nodewidth] (D) circle [radius=\noderad] ; 

\node[blue] (V) at (0.5,0.5) {\small $\lambda_i$} ;
\node[blue] (V1) at (1.5,0.5) {\small $\lambda_a$} ;\node[blue] (V2) at (0.5,1.5) {\small $\lambda_b$} ;
\node[blue] (V3) at (-0.5,0.5) {\small $\lambda_c$} ;\node[blue] (V4) at (0.5,-0.5) {\small $\lambda_d$} ;
\end{tikzpicture}
}}; 

\node at (4,0){
\scalebox{\graphscale}{
\begin{tikzpicture}
\foreach \s/\t in {A/B,B/C,C/D,D/A,A/A+,B/B+,C/C+,D/D+} {
\draw[line width=\edgewidth] (\s)--(\t); }

\filldraw [fill=black, line width=\nodewidth] (A) circle [radius=\noderad] ;
\filldraw [fill=white, line width=\nodewidth] (B) circle [radius=\noderad] ; 
\filldraw [fill=black, line width=\nodewidth] (C) circle [radius=\noderad] ;
\filldraw [fill=white, line width=\nodewidth] (D) circle [radius=\noderad] ; 

\node[blue] (V) at (0.5,0.5) {\small $\lambda_i$} ;
\node[blue] (V1) at (1.5,0.5) {\small $\lambda_a$} ;\node[blue] (V2) at (0.5,1.5) {\small $\lambda_b$} ;
\node[blue] (V3) at (-0.5,0.5) {\small $\lambda_c$} ;\node[blue] (V4) at (0.5,-0.5) {\small $\lambda_d$} ;

\draw [myarrow, line width=\flowwidth,red] (C+)--(C)--(B)--(B+); 
\end{tikzpicture}
}}; 

\node at (8,0){
\scalebox{\graphscale}{
\begin{tikzpicture}
\foreach \s/\t in {A/B,B/C,C/D,D/A,A/A+,B/B+,C/C+,D/D+} {
\draw[line width=\edgewidth] (\s)--(\t); }

\filldraw [fill=black, line width=\nodewidth] (A) circle [radius=\noderad] ;
\filldraw [fill=white, line width=\nodewidth] (B) circle [radius=\noderad] ; 
\filldraw [fill=black, line width=\nodewidth] (C) circle [radius=\noderad] ;
\filldraw [fill=white, line width=\nodewidth] (D) circle [radius=\noderad] ; 

\node[blue] (V) at (0.5,0.5) {\small $\lambda_i$} ;
\node[blue] (V1) at (1.5,0.5) {\small $\lambda_a$} ;\node[blue] (V2) at (0.5,1.5) {\small $\lambda_b$} ;
\node[blue] (V3) at (-0.5,0.5) {\small $\lambda_c$} ;\node[blue] (V4) at (0.5,-0.5) {\small $\lambda_d$} ;

\draw [myarrow, line width=\flowwidth,red] (D+)--(D)--(A)--(A+); 
\draw [myarrow, line width=\flowwidth,red] (C+)--(C)--(B)--(B+); 
\end{tikzpicture}
}}; 

\node at (0,-2.8){
\scalebox{\graphscale}{
\begin{tikzpicture}
\foreach \s/\t in {A/B,B/C,C/D,D/A,A/A+,B/B+,C/C+,D/D+} {
\draw[line width=\edgewidth] (\s)--(\t); }

\filldraw [fill=black, line width=\nodewidth] (A) circle [radius=\noderad] ;
\filldraw [fill=white, line width=\nodewidth] (B) circle [radius=\noderad] ; 
\filldraw [fill=black, line width=\nodewidth] (C) circle [radius=\noderad] ;
\filldraw [fill=white, line width=\nodewidth] (D) circle [radius=\noderad] ; 

\node[blue] (V) at (0.5,0.5) {\small $\lambda_i$} ;
\node[blue] (V1) at (1.5,0.5) {\small $\lambda_a$} ;\node[blue] (V2) at (0.5,1.5) {\small $\lambda_b$} ;
\node[blue] (V3) at (-0.5,0.5) {\small $\lambda_c$} ;\node[blue] (V4) at (0.5,-0.5) {\small $\lambda_d$} ;

\draw [myarrow, line width=\flowwidth,red] (C+)--(C)--(B)--(A)--(A+); 
\end{tikzpicture}
}}; 

\node at (4,-2.8){
\scalebox{\graphscale}{
\begin{tikzpicture}
\foreach \s/\t in {A/B,B/C,C/D,D/A,A/A+,B/B+,C/C+,D/D+} {
\draw[line width=\edgewidth] (\s)--(\t); }

\filldraw [fill=black, line width=\nodewidth] (A) circle [radius=\noderad] ;
\filldraw [fill=white, line width=\nodewidth] (B) circle [radius=\noderad] ; 
\filldraw [fill=black, line width=\nodewidth] (C) circle [radius=\noderad] ;
\filldraw [fill=white, line width=\nodewidth] (D) circle [radius=\noderad] ; 

\node[blue] (V) at (0.5,0.5) {\small $\lambda_i$} ;
\node[blue] (V1) at (1.5,0.5) {\small $\lambda_a$} ;\node[blue] (V2) at (0.5,1.5) {\small $\lambda_b$} ;
\node[blue] (V3) at (-0.5,0.5) {\small $\lambda_c$} ;\node[blue] (V4) at (0.5,-0.5) {\small $\lambda_d$} ;

\draw [myarrow, line width=\flowwidth,red] (D+)--(D)--(C)--(B)--(A)--(A+); 
\end{tikzpicture}
}}; 

\node at (8,-2.8){
\scalebox{\graphscale}{
\begin{tikzpicture}
\foreach \s/\t in {A/B,B/C,C/D,D/A,A/A+,B/B+,C/C+,D/D+} {
\draw[line width=\edgewidth] (\s)--(\t); }

\filldraw [fill=black, line width=\nodewidth] (A) circle [radius=\noderad] ;
\filldraw [fill=white, line width=\nodewidth] (B) circle [radius=\noderad] ; 
\filldraw [fill=black, line width=\nodewidth] (C) circle [radius=\noderad] ;
\filldraw [fill=white, line width=\nodewidth] (D) circle [radius=\noderad] ; 

\node[blue] (V) at (0.5,0.5) {\small $\lambda_i$} ;
\node[blue] (V1) at (1.5,0.5) {\small $\lambda_a$} ;\node[blue] (V2) at (0.5,1.5) {\small $\lambda_b$} ;
\node[blue] (V3) at (-0.5,0.5) {\small $\lambda_c$} ;\node[blue] (V4) at (0.5,-0.5) {\small $\lambda_d$} ;

\draw [myarrow, line width=\flowwidth,red] (D+)--(D)--(C)--(B)--(B+); 
\end{tikzpicture}
}}; 
\end{tikzpicture}}
\end{center}

Here, the upper left picture means that $F_{\min}$ does not pass through the neighborhood of $\lambda_i$. 
From these pictures, we see that if $\lambda_b$ or $\lambda_d$ is located at the left of a certain path $p$ in $F_{\min}$, 
then $\lambda_a$ is also located at the left of $p$. 
Thus we have $\nu_{\lambda_a}^J\ge\nu_{\lambda_b}^J$ and $\nu_{\lambda_a}^J\ge\nu_{\lambda_d}^J$. 
Also, when $\lambda_c$ is located at the left of a certain path $p$ in $F_{\min}$, 
$\lambda_b$ and $\lambda_d$ are also located at the left of $p$. 
Thus we have $\nu_{\lambda_b}^J\ge\nu_{\lambda_c}^J$ and $\nu_{\lambda_d}^J\ge\nu_{\lambda_c}^J$. 
\end{proof}

\begin{proposition}
\label{prop_simplify_FS}
Work with the same notation as {\rm Lemma~\ref{lem_simplify_FS}}. 
In particular, we label the faces of $G$ so that \eqref{eq_compare_valuation} is satisfied. 
Let $\sfF, \sfS$ be the maps defined as \eqref{eq_F} and \eqref{eq_S} respectively. 
Then we see that 
\begin{align*}
\sfF(\nu^J)_i &=-\nu_{\lambda_i}^J+\max\{\nu_{\lambda_a}^J+\nu_{\lambda_c}^J,\nu_{\lambda_b}^J+\nu_{\lambda_d}^J\}, \\
\sfS(\nu^J)_i &=\nu_{\lambda_i}^J+\nu_{\lambda_b}^J-\nu_{\lambda_d}^J.
\end{align*}
\end{proposition}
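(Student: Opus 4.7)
The plan is to exploit the inequalities in Lemma~\ref{lem_simplify_FS} to strip the absolute values and the outer $\min$ appearing in the definitions of $\sfF$ and $\sfS$, after which both identities reduce to an elementary case analysis. Throughout, I abbreviate $A=\nu_{\lambda_a}^J$, $B=\nu_{\lambda_b}^J$, $C=\nu_{\lambda_c}^J$, $D=\nu_{\lambda_d}^J$; Lemma~\ref{lem_simplify_FS} then supplies $A\ge B\ge C$ and $A\ge D\ge C$.

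First, using these inequalities I would rewrite every absolute value that occurs in \eqref{eq_F} and \eqref{eq_S} without bars:
\[
|A-B|=A-B,\qquad |C-D|=D-C,\qquad |A-D|=A-D,\qquad |B-C|=B-C.
\]
For $\sfF$, the additional consequence $A+D-(B+C)=(A-B)+(D-C)\ge 0$ yields $\min\{A+D,\,B+C\}=B+C$, so
\[
\sfF(\nu^J)_i=-\nu_{\lambda_i}^J+(B+C)+\max\{A-B,\,D-C\}.
\]
To finish this half, I split on the sign of $(A-B)-(D-C)=(A+C)-(B+D)$: when $A+C\ge B+D$ the bracket collapses to $A+C$, and when $A+C\le B+D$ it collapses to $B+D$. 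Either way, the right hand side equals $-\nu_{\lambda_i}^J+\max\{A+C,\,B+D\}$, which is the claimed formula.

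For $\sfS$, after the same replacement of absolute values the task reduces to the purely numerical identity
\[
\max\{A-D,\,B-C\}-\max\{A-B,\,D-C\}=B-D.
\]
Again I would do case analysis on the sign of $(A+C)-(B+D)$. When $A+C\ge B+D$ one checks that $A-D\ge B-C$ and $A-B\ge D-C$ are both equivalent to this inequality, so both maxima are attained at the first argument and the difference telescopes to $(A-D)-(A-B)=B-D$. In the opposite case both maxima are attained at the second argument and the difference telescopes to $(B-C)-(D-C)=B-D$, yielding the asserted formula for $\sfS(\nu^J)_i$.

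I do not anticipate any real difficulty, since all the geometric and combinatorial content of the statement is already concentrated in Lemma~\ref{lem_simplify_FS}. The only care required is to keep the two pairings straight: inside $\sfF$ the $\min$ couples $(\lambda_a,\lambda_d)$ with $(\lambda_b,\lambda_c)$ while the $\max$ couples $(\lambda_a,\lambda_b)$ with $(\lambda_c,\lambda_d)$, whereas $\sfS$ additionally uses the ``cross'' pairing $(\lambda_a,\lambda_d)$ and $(\lambda_b,\lambda_c)$ inside one of its maxima. Respecting these pairings, the simplification of the absolute values dictated by the ordering of $A,B,C,D$ is what makes everything collapse cleanly.
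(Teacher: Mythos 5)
Your proposal is correct and follows essentially the same route as the paper: both use Lemma~\ref{lem_simplify_FS} to resolve the $\min$ and strip the absolute values, and then reduce the claimed formulas to an elementary manipulation of maxima (the paper factors $B$ resp.\ $D$ out of a common $\max$, while you split into the two cases according to the sign of $(A+C)-(B+D)$, which is the same computation). No gaps.
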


\begin{proof}
Using Lemma~\ref{lem_simplify_FS} and the fact that any component of our valuation is non-negative, we see that 
\begin{align*}
&-\nu^J_{\lambda_i}+\min\{\nu^J_{\lambda_a}+\nu^J_{\lambda_d}, \nu^J_{\lambda_b}+\nu^J_{\lambda_c}\}
+\max\{|\nu^J_{\lambda_a}-\nu^J_{\lambda_b}|, |\nu^J_{\lambda_c}-\nu^J_{\lambda_d}|\} \\
=&-\nu^J_{\lambda_i}+(\nu^J_{\lambda_b}+\nu^J_{\lambda_c})
+\max\{\nu^J_{\lambda_a}-\nu^J_{\lambda_b}, \nu^J_{\lambda_d}-\nu^J_{\lambda_c}\} \\
=&-\nu^J_{\lambda_i}+\max\{\nu^J_{\lambda_a}+\nu^J_{\lambda_c}, \nu^J_{\lambda_b}+\nu^J_{\lambda_d}\}, 
\end{align*}
and 
\begin{align*}
&\nu^J_{\lambda_i}+\max\{|\nu^J_{\lambda_a}-\nu^J_{\lambda_d}|, |\nu^J_{\lambda_b}-\nu^J_{\lambda_c}|\} 
-\max\{|\nu^J_{\lambda_a}-\nu^J_{\lambda_b}|, |\nu^J_{\lambda_c}-\nu^J_{\lambda_d}|\} \\
=\,&\nu^J_{\lambda_i}+\max\{\nu^J_{\lambda_a}-\nu^J_{\lambda_d}, \nu^J_{\lambda_b}-\nu^J_{\lambda_c}\} 
-\max\{\nu^J_{\lambda_a}-\nu^J_{\lambda_b}, \nu^J_{\lambda_d}-\nu^J_{\lambda_c}\} \\
=\,&\nu^J_{\lambda_i}+\max\{\nu^J_{\lambda_a}-\nu^J_{\lambda_b}-\nu^J_{\lambda_d}, -\nu^J_{\lambda_c}\}+\nu^J_{\lambda_b}
-\max\{\nu^J_{\lambda_a}-\nu^J_{\lambda_b}-\nu^J_{\lambda_d}, -\nu^J_{\lambda_c}\}-\nu^J_{\lambda_d} \\
=\,&\nu_{\lambda_i}^J+\nu_{\lambda_b}^J-\nu_{\lambda_d}^J
\end{align*}
for any $J\in{[n] \choose k}$. Thus we have the assertions. 
\end{proof}

\begin{remark}
By the description of $\sfS$ as in Proposition~\ref{prop_simplify_FS}, we see that 
if the Newton--Okounkov polytope $\Delta_G$ is integral, then $\Delta_G$ is the convex hull of the valuations of Pl\"{u}cker coordinates 
by Theorem~\ref{thm_NO=C}, in which case we see that $\Delta_G$ and $\sfS(\Delta_G)$ are unimodularly equivalent. 
On the other hand, $\sfF(\Delta_G)$ is not convex in general. 
For example, we consider the Newton--Okounkov polytope $\Delta_G$ for $G=G^{\rm rec}_{2,6}$, 
which was given in Example~\ref{ex_NOpolytope_Gr(2,6)}. 
In this case, we see that $\sfF(\Delta_G)$ is not convex by the same argument as Example~\ref{ex_not_convex}. 
\end{remark}

The image of $\sfF$ in Proposition~\ref{prop_simplify_FS} is the tropicalization of the exchange relation with respect to the maximum convention. 
Thus, we then compare this map with the tropicalized cluster mutation $\Psi_{G,G^\prime}$ that uses the minimum convention. 
To do so, we introduce a new piecewise-linear map $\Psi^{\max}_{G,G^\prime}$ that coincides with $\sfF$ on the level of valuations, 
i.e., $\Psi^{\max}_{G,G^\prime}(\nu^J)=\sfF(\nu^J)$ for any Pl\"{u}cker coordinate $\overline{p}_J$. 
For consistency, let $\lambda_{s+1}=\varnothing$ and we omit the coordinate labeled by $\lambda_{s+1}$ 
since $v_{\lambda_{s+1}}=0$ in our interested situation. 

\begin{definition}
\label{def_maxpropical}
Let the notation be the same as Definition~\ref{def_trop_mutation}.
We define the piecewise-linear map $\Psi^{\max}_{G,G^\prime}: \RR^{\calP_G}\rightarrow \RR^{\calP_{G^\prime}}$ as
$(v_{\lambda_1},\dots, v_{\lambda_s}) \mapsto 
(v_{\lambda_1},\dots, v_{\lambda_{i-1}}, v_{\lambda_i}^\prime, v_{\lambda_{i+1}},\dots, v_{\lambda_s})$, 
where 
\[
v_{\lambda_i}^\prime=-v_{\lambda_i}+\max\{v_{\lambda_a}+v_{\lambda_c}, v_{\lambda_b}+v_{\lambda_d}\}. 
\]
\end{definition}

In contrast to this, we denote $\Psi^{\min}_{G,G^\prime}=\Psi_{G,G^\prime}$. 
As we showed in Proposition~\ref{prop_tropical=combmutation}, $\Psi^{\min}_{G,G^\prime}$ can be 
described as $\Psi^{\min}_{G,G^\prime}=\varphi_{w,F_{\lambda_i}}\circ\epsilon_{\lambda_i}$. 
We have a similar description for $\Psi_{G,G^\prime}^{\max}$. 

\begin{proposition}
Let the notation be the same as {\rm Proposition~\ref{prop_tropical=combmutation}}. 
Then we have that $\Psi_{G,G^\prime}^{\max}=\varphi_{-w,F_{\lambda_i}}\circ\epsilon_{\lambda_i}$. 
In particular, we have 
\[
\varphi_{w,F_{\lambda_i}}\circ \Psi_{G,G^\prime}^{\max}=\varphi_{-w,F_{\lambda_i}}\circ \Psi_{G,G^\prime}^{\min}=\epsilon_i 
\quad \text{and} \quad
\Psi_{G,G^\prime}^{\min}=(\varphi_{w,F_{\lambda_i}})^2\circ \Psi_{G,G^\prime}^{\max}. 
\]
\end{proposition}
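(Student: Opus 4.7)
The plan is to prove the main identity $\Psi^{\max}_{G,G^\prime} = \varphi_{-w, F_{\lambda_i}} \circ \epsilon_{\lambda_i}$ by a direct coordinate-wise computation in the spirit of the proof of Proposition~\ref{prop_tropical=combmutation}, and then to deduce the three remaining identities as purely formal consequences of the main identity, Proposition~\ref{prop_tropical=combmutation}, and the involution property $\varphi_{w, F_{\lambda_i}} \circ \varphi_{-w, F_{\lambda_i}} = \mathrm{id}$ recorded in Section~\ref{sec_prelim_mutation}.

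For the main identity, I would argue coordinate by coordinate. On all entries indexed by $\lambda_j$ with $j \ne i$, both $\epsilon_{\lambda_i}$ and $\varphi_{-w, F_{\lambda_i}}$ act as the identity, so the claim reduces to checking the $\lambda_i$-th coordinate. Setting $v' := \epsilon_{\lambda_i}(v)$, which differs from $v$ only in sign-flipping the $\lambda_i$-coordinate, and using that every vertex of $F_{\lambda_i}$ has zero $\lambda_i$-coordinate, I would compute $v'_{\min} = \min\{\langle v', u\rangle : u \in F_{\lambda_i}\} = \min\{v_{\lambda_a}+v_{\lambda_c},\, v_{\lambda_b}+v_{\lambda_d}\}$, exactly as in the proof of Proposition~\ref{prop_tropical=combmutation}. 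Evaluating $\varphi_{-w, F_{\lambda_i}}(v')$ on the $\lambda_i$-coordinate via the inverse relation $\varphi_{-w, F_{\lambda_i}} = \varphi_{w, F_{\lambda_i}}^{-1}$ then produces $-v_{\lambda_i} + \max\{v_{\lambda_a}+v_{\lambda_c},\, v_{\lambda_b}+v_{\lambda_d}\}$, which is precisely $\Psi^{\max}_{G,G^\prime}(v)_{\lambda_i}$ by Definition~\ref{def_maxpropical}.

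Having established the main identity, the three remaining identities follow formally. Composing with $\varphi_{w, F_{\lambda_i}}$ on the left of the main identity and using $\varphi_{w, F_{\lambda_i}} \circ \varphi_{-w, F_{\lambda_i}} = \mathrm{id}$ gives $\varphi_{w, F_{\lambda_i}} \circ \Psi^{\max}_{G,G^\prime} = \epsilon_{\lambda_i}$. Symmetrically, composing $\varphi_{-w, F_{\lambda_i}}$ on the left of the identity $\Psi^{\min}_{G,G^\prime} = \varphi_{w, F_{\lambda_i}} \circ \epsilon_{\lambda_i}$ from Proposition~\ref{prop_tropical=combmutation} yields $\varphi_{-w, F_{\lambda_i}} \circ \Psi^{\min}_{G,G^\prime} = \epsilon_{\lambda_i}$. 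Finally, substituting $\epsilon_{\lambda_i} = \varphi_{w, F_{\lambda_i}} \circ \Psi^{\max}_{G,G^\prime}$ from the first of these back into Proposition~\ref{prop_tropical=combmutation} immediately yields $\Psi^{\min}_{G,G^\prime} = (\varphi_{w, F_{\lambda_i}})^2 \circ \Psi^{\max}_{G,G^\prime}$.

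The hard part will be the sign bookkeeping when verifying the $\lambda_i$-coordinate of the main identity: the definition $\varphi_{w, F}(v) = v - v_{\min}w$ in \eqref{eq_piecewise-linear} is built from a minimum, whereas $\Psi^{\max}_{G,G^\prime}$ is built from a maximum, so the conversion between the two forces one to invoke $\varphi_{-w, F_{\lambda_i}} = \varphi_{w, F_{\lambda_i}}^{-1}$ and track carefully which piece of the piecewise-linear domain one is working in, exploiting the fact that $F_{\lambda_i} \subset w^\perp$ so that $v'_{\min}$ is unaffected by the sign-flip on the $\lambda_i$-coordinate. Once this single coordinate calculation is carried out correctly, the remainder of the proof proceeds automatically and mirrors the structure of Proposition~\ref{prop_tropical=combmutation}.
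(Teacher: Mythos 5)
Your overall architecture --- a coordinate check of the main identity in the style of Proposition~\ref{prop_tropical=combmutation}, followed by a purely formal deduction of the remaining identities from $\varphi_{w,F_{\lambda_i}}^{-1}=\varphi_{-w,F_{\lambda_i}}$ --- is exactly the paper's, and the formal second half is fine. But the step you defer as ``sign bookkeeping'' is a genuine gap, and it cannot be closed the way you propose. Write $A=v_{\lambda_a}+v_{\lambda_c}$, $B=v_{\lambda_b}+v_{\lambda_d}$ and $v'=\epsilon_{\lambda_i}(v)$. You correctly get $v'_{\min}=\min\{A,B\}$, since $\epsilon_{\lambda_i}$ does not touch the coordinates $\lambda_a,\lambda_b,\lambda_c,\lambda_d$. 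But then Definition~\ref{def_mutation_polygonM}, with $w=-\bfe_{\lambda_i}$, gives
\[
\varphi_{-w,F_{\lambda_i}}(v')\;=\;v'-v'_{\min}\,(-w)\;=\;v'-\min\{A,B\}\,\bfe_{\lambda_i},
\]
whose $\lambda_i$-coordinate is $-v_{\lambda_i}-\min\{A,B\}$, not $-v_{\lambda_i}+\max\{A,B\}$; the two differ by $A+B$, which is generically nonzero (already on the valuation vectors of Table~\ref{table_rec_Gr(2,6)}, where all entries are non-negative). No tracking of ``which piece of the piecewise-linear domain one is in'' can repair this: the quantity $\min\{A,B\}$ is computed from coordinates that none of the maps $\epsilon_{\lambda_i}$, $\varphi_{\pm w,F_{\lambda_i}}$ ever alter, so no composition of them can convert that minimum into a maximum. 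Appealing to $\varphi_{-w,F_{\lambda_i}}=\varphi_{w,F_{\lambda_i}}^{-1}$ does not help either, because the inverse of this particular min-based map is again min-based, for the same reason.

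The correct repair is to change the factor, not the bookkeeping. Since $\min_{u\in -F_{\lambda_i}}\langle v,u\rangle=-\max_{u\in F_{\lambda_i}}\langle v,u\rangle$, your computation goes through verbatim once $F_{\lambda_i}$ is replaced by $-F_{\lambda_i}=\Conv\{-\bfe_{\lambda_a}-\bfe_{\lambda_c},\,-\bfe_{\lambda_b}-\bfe_{\lambda_d}\}$, which is the translate $F_{\lambda_i}-(\bfe_{\lambda_a}+\bfe_{\lambda_b}+\bfe_{\lambda_c}+\bfe_{\lambda_d})$ by a vector lying in $w^\perp$: one obtains $\Psi^{\max}_{G,G^\prime}=\varphi_{-w,-F_{\lambda_i}}\circ\epsilon_{\lambda_i}$ (equivalently, conjugate Proposition~\ref{prop_tropical=combmutation} by $v\mapsto -v$, which interchanges the two tropicalization conventions). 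The downstream identities must then be adjusted in the same way, e.g. $\varphi_{w,-F_{\lambda_i}}\circ\Psi^{\max}_{G,G^\prime}=\epsilon_{\lambda_i}$ and $\Psi^{\min}_{G,G^\prime}=\varphi_{w,F_{\lambda_i}}\circ\varphi_{w,-F_{\lambda_i}}\circ\Psi^{\max}_{G,G^\prime}$, in place of $(\varphi_{w,F_{\lambda_i}})^2\circ\Psi^{\max}_{G,G^\prime}$. To be fair to you, the paper's own one-line proof (``an argument similar to the proof of Proposition~\ref{prop_tropical=combmutation}'') glosses over exactly this point, and the statement as printed silently identifies $F_{\lambda_i}$ with its translate $-F_{\lambda_i}$; that identification is harmless for combinatorial mutations of polytopes (translation of the factor changes $\mut_w$ only up to unimodular equivalence) but not for the piecewise-linear maps $\varphi$, which change by a unimodular shear. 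So your attempt faithfully reproduces the paper's approach, including its flaw; the difference is that you flagged the problematic step and postponed it, whereas the fix requires altering the factor itself.
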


\begin{proof}
First, $\Psi_{G,G^\prime}^{\max}=\varphi_{-w,F_{\lambda_i}}\circ\epsilon_{\lambda_i}$ follows from an argument similar to the proof of Proposition~\ref{prop_tropical=combmutation}. 
The remaining assertions follow from the fact that $\varphi_{w,F_{\lambda_i}}^{-1}=\varphi_{-w,F_{\lambda_i}}$ and 
$\Psi^{\min}_{G,G^\prime}=\varphi_{w,F_{\lambda_i}}\circ\epsilon_{\lambda_i}$. 
\end{proof}


\appendix
\section{Computations for $\Gr(3,6)$}
\label{appendix_Gr(3,6)}

In this section, we apply several statements discussed in this article to the case of $\Gr(3,6)$. 
A notable point is that we encounter Newton--Okounkov polytopes that are not integral. 

Following \cite[Section~7]{BFFHL}, \cite[Section~9]{RW} and \cite[Section~5]{SS}, we recall some facts concerning toric degenerations of $\Gr(3,6)$. 
First, it is known that there are $34$ plabic graphs of type $\pi_{3,6}$ up to moving operations (M2) and (M3), see Figure~\ref{ex_graph_Gr36_1} and \ref{ex_graph_Gr36_2} below.
From these plabic graphs, we have $6$ isomorphism classes of Newton--Okounkov polytopes, 
which give toric degenerations as in Theorem~\ref{thm_toric_degeneration}. 
On the other hand, there are $7$ isomorphism classes of maximal prime cones in the tropical Grassmannian $\Trop(\Gr(3, 6))$ 
that give rise to toric degenerations of $\Gr(3,6)$. 
In the language of \cite{SS}, these classes are respectively labeled by FFGG, EEEE, EEFF1, EEFF2, EFFG, EEEG, and EEFG. 
In these classes, only the last $5$ isomorphism classes correspond to the ones arising from plabic graphs. 
Thus, there is an isomorphism class of Newton--Okounkov polytopes arising from plabic graphs that does not correspond to 
a maximal prime cone of $\Trop(\Gr(3, 6))$, which should be labeled by GG (cf. \cite[after Lemma~5.1]{SS}) due to its feature. 
There are exactly two plabic graphs whose associated Newton--Okounkov polytopes are GG. 
In summary, we have the following table: 

\medskip

\scalebox{0.8}{
{\renewcommand\arraystretch{1.1}\tabcolsep= 7pt
\begin{tabular}{r|cccccccc}
Labels of isomorphism classes &FFGG&EEEE&EEFF1&EEFF2&EFFG&EEEG&EEFG&GG \\ \hline\hline
max. prime cones in $\Trop(\Gr(3, 6))$&\checkmark&\checkmark&\checkmark&\checkmark&\checkmark&\checkmark&\checkmark \\ \hline
plabic graphs of type $\pi_{3,6}$&&&\checkmark&\checkmark&\checkmark&\checkmark&\checkmark&\checkmark
\end{tabular}}}

\medskip

In what follows, we focus on the Newton--Okounkov polytopes associated to plabic graphs of type $\pi_{3,6}$. 
As we noted above, $32$ plabic graphs give the integral Newton--Okounkov polytopes which are respectively labeled one of 
$\{\text{EEFF1, EEFF2, EFFG, EEEG, EEFG}\}$, and the remaining two plabic graphs give the non-integral Newton--Okounkov polytopes labeled by GG. 
The precise labeling can be revealed by comparing the $3$-subsets labeling the internal faces of a plabic graph and the first column of \cite[Table~1]{BFFHL}. 
For example, the rectangle plabic graph $G=G^{\rm rec}_{3,6}$ takes the form as in the left of Figure~\ref{ex_plabic36}. 
The right of Figure~\ref{ex_plabic36} is the acyclic perfect orientation whose source set is $\{1,2,3\}$. 

\begin{figure}[H]
\begin{center}
\scalebox{0.8}{
\begin{tikzpicture}[myarrow/.style={black, -latex}]
\newcommand{\boundaryrad}{2cm} 
\newcommand{\noderad}{0.13cm} 
\newcommand{\nodewidth}{0.035cm} 
\newcommand{\edgewidth}{0.035cm} 
\newcommand{\boundarylabel}{8pt} 
\newcommand{\arrowwidth}{0.05cm} 
\ytableausetup{boxsize=4.5pt}
\usetikzlibrary{decorations.markings}
\tikzset{sarrow/.style={decoration={markings, 
                               mark=at position .7 with {\arrow[scale=1.3, red]{latex}}},
                               postaction={decorate}}}                               
\tikzset{ssarrow/.style={decoration={markings, 
                               mark=at position .8 with {\arrow[scale=1.3, red]{latex}}},
                               postaction={decorate}}}

\foreach \n/\a in {1/60, 2/0, 3/300, 4/240, 5/180, 6/120} {
\coordinate (B\n) at (\a:\boundaryrad); 
\coordinate (B\n+) at (\a:\boundaryrad+\boundarylabel); };

\node at (0,0) {
%
%
\begin{tikzpicture}
\draw (0,0) circle(\boundaryrad) [gray, line width=\edgewidth];
\foreach \n in {1,...,6} { \draw (B\n+) node {\small $\n$}; }
\foreach \n/\a/\r in {7/60/0.65, 8/0/0.7, 9/330/0.58, 10/300/0.65, 11/240/0.65, 12/210/0.58, 13/180/0.7, 14/120/0.65, 15/0/0.15, 16/180/0.15} {
\coordinate (B\n) at (\a:\r*\boundaryrad); }; 
\foreach \s/\t in {1/7, 2/8, 3/10, 4/11, 5/13, 6/14, 7/8, 8/9, 9/10, 10/11, 11/12, 12/13, 13/14, 14/7, 7/15, 9/15, 12/16, 14/16, 15/16} {
\draw[line width=\edgewidth] (B\s)--(B\t); 
};
\foreach \x in {8,10,12,14,15} {
\filldraw [fill=black, line width=\edgewidth] (B\x) circle [radius=\noderad-0.01cm] ;}; 
\foreach \x in {7,9,11,13,16} {
\filldraw [fill=white, line width=\edgewidth] (B\x) circle [radius=\noderad] ; };

\foreach \n/\a/\r in {123/270/0.8, 234/210/0.82, 345/150/0.8, 456/90/0.8, 156/30/0.8, 126/330/0.82} {
\coordinate (V\n) at (\a:\r*\boundaryrad); }; 
\node[blue] at (V123) {\ydiagram{3,3,3}}; 
\node[blue] at (V234) {\ydiagram{2,2,2}}; 
\node[blue] at (V345) {\ydiagram{1,1,1}};
\node[blue] at (V456) {$\varnothing$}; 
\node[blue] at (V156) {\ydiagram{3}};
\node[blue] at (V126) {\ydiagram{3,3}}; 
\foreach \n/\a/\r in {356/90/0.3, 236/270/0.3, 256/10/0.42, 346/170/0.42} {
\coordinate (V\n) at (\a:\r*\boundaryrad); }; 
\node[blue] at (V356) {\ydiagram{1}}; 
\node[blue] at (V236) {\ydiagram{2,2}}; 
\node[blue] at (V256) {\ydiagram{2}}; 
\node[blue] at (V346) {\ydiagram{1,1}}; 
\end{tikzpicture}};

\node at (7,0) {
%
%
\begin{tikzpicture}                               
\draw (0,0) circle(\boundaryrad) [gray, line width=\edgewidth];
\draw (0,0) circle(\boundaryrad) [gray, line width=\edgewidth, fill=white];
\foreach \n in {1,...,6} { \draw (B\n+) node {\small $\n$}; }
\foreach \n/\a/\r in {7/60/0.65, 8/0/0.7, 9/330/0.58, 10/300/0.65, 11/240/0.65, 12/210/0.58, 13/180/0.7, 14/120/0.65, 15/0/0.15, 16/180/0.15} {
\coordinate (B\n) at (\a:\r*\boundaryrad); }; 
\foreach \s/\t in {1/7, 2/8, 3/10, 7/8, 7/14, 7/15, 8/9, 9/10, 10/11, 11/12, 12/13, 13/14, 9/15, 15/16, 16/12, 16/14} {
\draw[sarrow, line width=\edgewidth] (B\s)--(B\t);}; 
\foreach \s/\t in {11/4, 13/5, 14/6} {
\draw[ssarrow, line width=\edgewidth] (B\s)--(B\t);}; 
\foreach \x in {8,10,12,14,15} {
\filldraw [fill=black, line width=\edgewidth] (B\x) circle [radius=\noderad-0.01cm] ;}; 
\foreach \x in {7,9,11,13,16} {
\filldraw [fill=white, line width=\edgewidth] (B\x) circle [radius=\noderad] ; };

\foreach \n/\a/\r in {123/270/0.8, 234/210/0.82, 345/150/0.8, 456/90/0.8, 156/30/0.8, 126/330/0.82} {
\coordinate (V\n) at (\a:\r*\boundaryrad); }; 
\node[blue] at (V123) {\ydiagram{3,3,3}}; 
\node[blue] at (V234) {\ydiagram{2,2,2}}; 
\node[blue] at (V345) {\ydiagram{1,1,1}};
\node[blue] at (V456) {$\varnothing$}; 
\node[blue] at (V156) {\ydiagram{3}};
\node[blue] at (V126) {\ydiagram{3,3}}; 
\foreach \n/\a/\r in {356/90/0.3, 236/270/0.3, 256/10/0.42, 346/170/0.42} {
\coordinate (V\n) at (\a:\r*\boundaryrad); }; 
\node[blue] at (V356) {\ydiagram{1}}; 
\node[blue] at (V236) {\ydiagram{2,2}}; 
\node[blue] at (V256) {\ydiagram{2}}; 
\node[blue] at (V346) {\ydiagram{1,1}}; 
\end{tikzpicture}};
\end{tikzpicture}}
\end{center}
\caption{The rectangle plabic graph $G=G^{\rm rec}_{3,6}$ (left) and the acyclic perfect orientation (right)}
\label{ex_plabic36}
\end{figure}
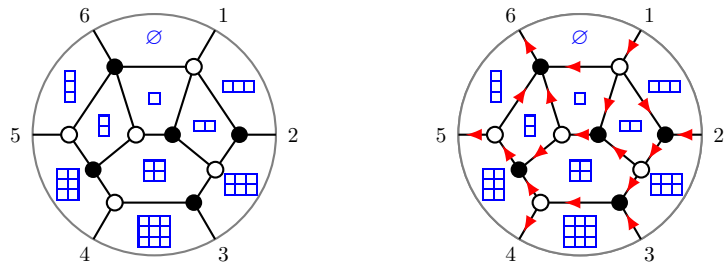

\medskip

Using this perfect orientation, we can compute the flow polynomial $P^G_J$ for any $3$-subset $J\in{[6] \choose 3}$, 
and it determines the valuations as in Table~\ref{table_rec_Gr(3,6)}.

\ytableausetup{boxsize=3.5pt}
\begin{table}[H]
\begin{center}
\scalebox{0.9}{
{\renewcommand\arraystretch{1.1}\tabcolsep= 10pt
\begin{tabular}{c|ccccccccc}
&\raise0.5ex\hbox{\ydiagram{1}}&\raise0.5ex\hbox{\ydiagram{2}}&\raise1ex\hbox{\ydiagram{1,1}}
&\raise1ex\hbox{\ydiagram{2,2}}&\raise0.5ex\hbox{\ydiagram{3}}&\raise1.5ex\hbox{\ydiagram{1,1,1}}&\raise1ex\hbox{\ydiagram{3,3}}&
\raise1.5ex\hbox{\ydiagram{2,2,2}}&\raise1.5ex\hbox{\ydiagram{3,3,3}} \rule[0mm]{0mm}{5mm} \\ \hline
$\{1,2,3\}$&0&0&0&0&0&0&0&0&0 \\ \hline
$\{1,2,4\}$&0&0&0&0&0&0&0&0&1 \\ \hline
$\{1,2,5\}$&0&0&0&0&0&0&0&1&1 \\ \hline
$\{1,2,6\}$&0&0&0&0&0&1&0&1&1 \\ \hline
$\{1,3,4\}$&0&0&0&0&0&0&1&0&1 \\ \hline
$\{1,3,5\}$&0&0&0&0&0&0&1&1&1 \\ \hline
$\{1,3,6\}$&0&0&0&0&0&1&1&1&1 \\ \hline
$\{1,4,5\}$&0&0&0&1&0&0&1&1&2 \\ \hline
$\{1,4,6\}$&0&0&0&1&0&1&1&1&2 \\ \hline
$\{1,5,6\}$&0&0&1&1&0&1&1&2&2 \\ \hline
$\{2,3,4\}$&0&0&0&0&1&0&1&0&1 \\ \hline
$\{2,3,5\}$&0&0&0&0&1&0&1&1&1 \\ \hline
$\{2,3,6\}$&0&0&0&0&1&1&1&1&1 \\ \hline
$\{2,4,5\}$&0&0&0&1&1&0&1&1&2 \\ \hline
$\{2,4,6\}$&0&0&0&1&1&1&1&1&2 \\ \hline
$\{2,5,6\}$&0&0&1&1&1&1&1&2&2 \\ \hline
$\{3,4,5\}$&0&1&0&1&1&0&2&1&2 \\ \hline
$\{3,4,6\}$&0&1&0&1&1&1&2&1&2 \\ \hline
$\{3,5,6\}$&0&1&1&1&1&1&2&2&2 \\ \hline
$\{4,5,6\}$&1&1&1&2&1&1&2&2&3 
\end{tabular}}}
\end{center}
\caption{The valuation $\val_G(\overline{p}_J)$ for any Pl\"{u}cker coordinate $\overline{p}_J$ of $\Gr(3,6)$ (we write only $J$ instead of $\overline{p}_J$ in the columns).}
\label{table_rec_Gr(3,6)}
\end{table}

By Proposition~\ref{prop_unimodular_GT}, $\Delta_G$ is unimodular equivalent to the Gelfand--Tsetlin polytope $\calO(\Pi_{3,6})$, which is a lattice polytope. 
Thus $\Delta_G$ is the convex hull of the row vectors given in this table by Theorem~\ref{thm_NO=C}, 
and $\Delta_G$ corresponds to the isomorphism class EEFF1. 
We remark that by Proposition~\ref{prop_unimodular_GT}, the unimodular transformation $f$ satisfying $f(\Delta_G)=\calO(\Pi_{3,6})$ is given as follows: 
\ytableausetup{boxsize=3.5pt}
\[
\nu^J_{\ydiagram{3,3,3}}  \mapsto \nu^J_{\ydiagram{3,3,3}} -\nu^J_{\ydiagram{2,2}}, \hspace{15pt}
\nu^J_{\ydiagram{2,2,2}}  \mapsto \nu^J_{\ydiagram{2,2,2}} -\nu^J_{\ydiagram{1,1}}, \hspace{15pt}
\nu^J_{\ydiagram{3,3}}  \mapsto \nu^J_{\ydiagram{3,3}} -\nu^J_{\ydiagram{2}}, \hspace{15pt}
\nu^J_{\ydiagram{2,2}}  \mapsto \nu^J_{\ydiagram{2,2}} -\nu^J_{\ydiagram{1}},
\]
\[
\nu^J_\lambda  \mapsto \nu^J_\lambda 
\quad \text{for } \lambda\in\calP_G{\setminus}\big\{\raise1ex\hbox{\ydiagram{3,3,3}}, \raise1ex\hbox{\ydiagram{2,2,2}}, \raise0.6ex\hbox{\ydiagram{3,3}}, \raise0.6ex\hbox{\ydiagram{2,2}}\big\}, 
\]
for any $J\in{[6] \choose 3}$, 
where $\nu_\lambda^J$ stands for the component of $\val_G(\overline{p}_J)$ corresponding to the Young diagram $\lambda$. 

\medskip

Next, as we noted in Proposition~\ref{prop_move_equiv}, all plabic graphs of type $\pi_{3,6}$ are move-equivalent. 
Thus the associated Newton--Okounkov polytopes are connected by the tropicalized cluster mutations (see Theorem~\ref{thm_trop_map_NO}), 
and hence by the composition of unimodular transformations and combinatorial mutations (see Proposition~\ref{prop_tropical=combmutation}). 
In Figures \ref{ex_graph_Gr36_1} and \ref{ex_graph_Gr36_2} below, we give the \emph{mutation graph} which shows 
how to connect the plabic graphs of type $\pi_{3,6}$ and the associated Newton--Okounkov polytopes. 
Precisely, in the figures, a vertex corresponds to a plabic graph $G$, 
and plabic graphs $G, G^\prime$ connected by an edge are transformed each other by a square move, 
in which case the associated Newton--Okounkov polytope $\Delta_G$ and $\Delta_{G^\prime}$ are 
related by the piecewise-linear map $\Psi_{G,G^\prime}$ or $\varphi_{w, F_{\lambda_i}}\circ\epsilon_{\lambda_i}$ as in Proposition~\ref{prop_tropical=combmutation}. 
We also add the labels of isomorphism classes of the Newton--Okounkov polytopes in these figures. 

\ytableausetup{boxsize=3.5pt}
As these figures show, if we apply the square move to an internal face of $G^{\rec}_{3,6}$ labeled by \ydiagram{1}, \raise0.5ex\hbox{\ydiagram{1,1}} 
or \ydiagram{2} , then we again have a plabic graph of type $\pi_{3,6}$. 
Whereas, the internal face of $G^{\rec}_{3,6}$ labeled by \raise0.5ex\hbox{\ydiagram{2,2}} is not a square, thus we can not apply the square move. 
However, we can consider the quiver mutation of $Q(G^{\rec}_{3,6})$ at the vertex corresponding to \raise0.5ex\hbox{\ydiagram{2,2}}, 
and the tropicalized cluster mutation at \raise0.5ex\hbox{\ydiagram{2,2}} as in \cite[Definition~11.8]{RW}. 
As we mentioned in Remark~\ref{rem_tropicalized_mutation}, Theorem~\ref{thm_trop_map_NO} also holds even if we consider this situation. 
More precisely, we modify $v_{\lambda_i}^\prime$ in \eqref{eq_tropical_mutation1} to 
\[
v_{\lambda_i}^\prime=-v_{\lambda_i}+\min\bigg\{\sum_{j\rightarrow i} v_{\lambda_j}, \,\, \sum_{i\rightarrow j} v_{\lambda_j}\bigg\}. 
\]
Thus, in our situation we consider 
\[
v_{\ydiagram{2,2}}^\prime=-v_{\ydiagram{2,2}}+\min\Big\{v_{\ydiagram{2}}+v_{\ydiagram{1,1}}+v_{\ydiagram{3,3,3}}, \,\, 
v_{\ydiagram{1}}+v_{\ydiagram{2,2,2}}+v_{\ydiagram{3,3}}\Big\}.
\]
Under this piecewise-linear map, the column vector corresponding to \raise0.5ex\hbox{\ydiagram{2,2}} in Table~\ref{table_rec_Gr(3,6)} 
will be send to the transpose of 
\[
(0,0,1,1,1,1,1,1,1,2,1,1,1,1,1,2,2,2,3,3). 
\]
The Newton--Okounkov polytope defined by this new valuation has properties similar to the one arising from a plabic graph of type $\pi_{3,6}$. 
In particular, it gives a toric degeneration of $\XX_{3,6}$, see \cite[Corollary~17.11]{RW}. 

%
%

\begin{figure}[H]
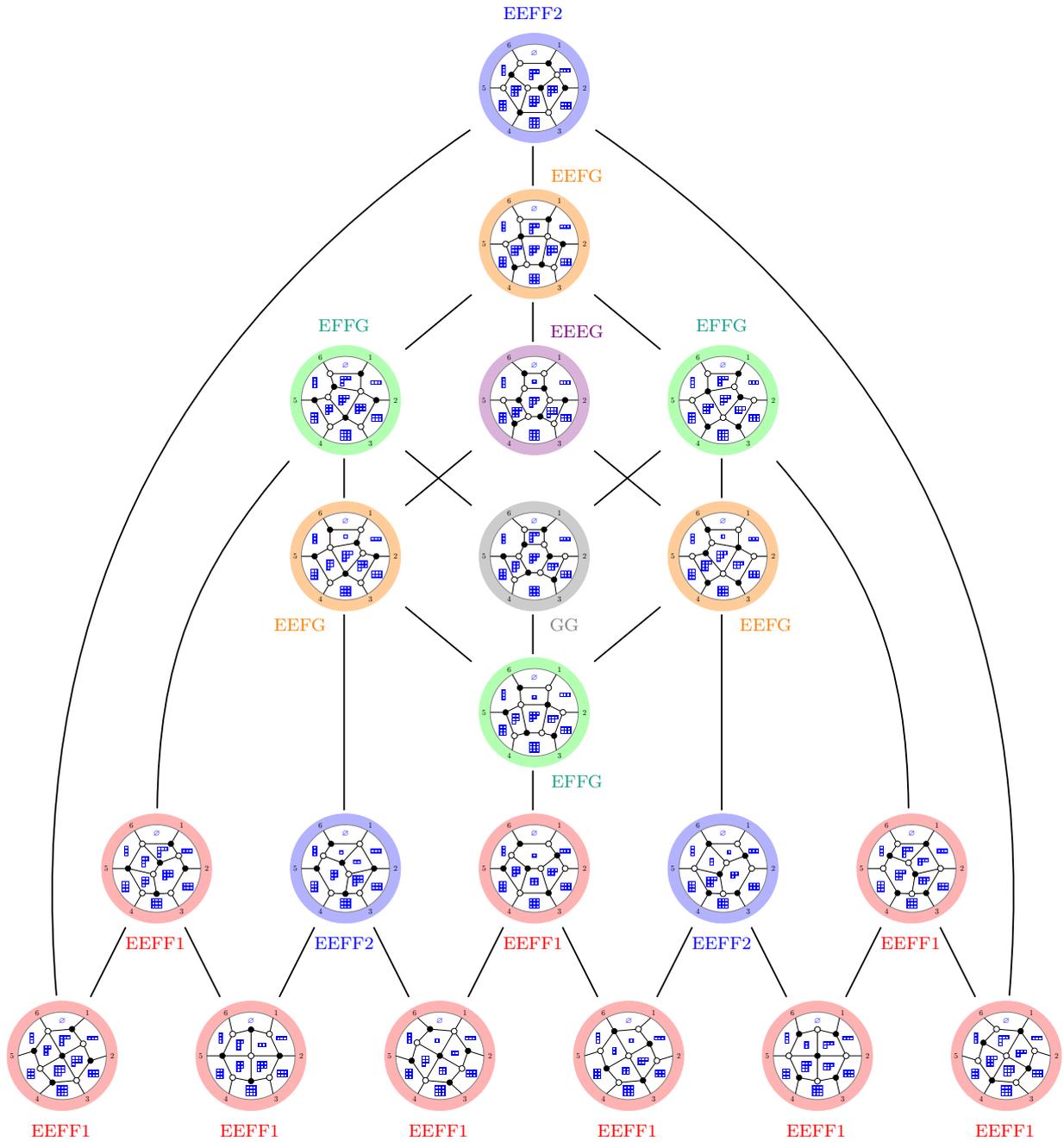

\begin{center}
\scalebox{0.96}{
}};

\foreach \s/\t in {5/13,13/3,3/15,15/7,7/4,4/8,8/17,17/10,10/11,11/12,15/29,4/22,17/31,
22/29,22/1,22/31,29/21,29/28,1/21,1/25,31/28,31/25,21/30,28/30,25/30,30/16} {
\draw[shorten >=-0.08cm, shorten <=-0.08cm, auto, line width=\edgewidth-0.01cm] (P\s)--(P\t); 
};

\foreach \s/\t in {5/16,16/12} {
\draw[shorten >=-0.05cm, shorten <=-0.05cm, line width=\edgewidth-0.01cm] (P\s)  [bend left] to (P\t); 
};

\foreach \s/\t in {13/21,25/11} {
\draw[shorten >=-0.05cm, shorten <=-0.05cm, line width=\edgewidth-0.01cm] (P\s)  [bend left=20] to (P\t); 
};
\end{tikzpicture}}
\end{center}
\caption{A part of the mutation graph for plabic graphs of type $\pi_{3,6}$ (continued). 
The bottom is identified with the top of Figure~\ref{ex_graph_Gr36_2}}
\label{ex_graph_Gr36_1}
\end{figure}

%
%

\begin{figure}[H]
\begin{center}
\scalebox{0.96}{
}};

\foreach \s/\t in {5/6,6/3,3/20,20/7,7/9,9/8,8/18,18/10,10/14,14/12,20/32,9/23,23/32,23/2,23/33,
18/33,32/26,32/27,2/26,2/24,33/27,33/24,26/34,27/34,24/34,34/19} {
\draw[shorten >=-0.08cm, shorten <=-0.08cm, auto, line width=\edgewidth-0.01cm] (P\s)--(P\t); 
};
\foreach \s/\t in {19/5,12/19} {
\draw[shorten >=-0.05cm, shorten <=-0.05cm, line width=\edgewidth-0.01cm] (P\s)  [bend left] to (P\t); 
};
\foreach \s/\t in {26/6,14/24} {
\draw[shorten >=-0.05cm, shorten <=-0.05cm, line width=\edgewidth-0.01cm] (P\s)  [bend left=20] to (P\t); 
};

\end{tikzpicture}}
\end{center}
\caption{A part of the mutation graph for plabic graphs of type $\pi_{3,6}$. The top is identified with the bottom of Figure~\ref{ex_graph_Gr36_1}}
\label{ex_graph_Gr36_2}
\end{figure}


Meanwhile, the top plabic graph in Figure~\ref{ex_graph_Gr36_1} is the dual $(G^{\rm rec}_{3,6})^\vee$ of $G^{\rm rec}_{3,6}$. 
The Newton--Okounkov polytopes $\Delta_{G^\vee}$ associated to $G^\vee=(G^{\rm rec}_{3,6})^\vee$ is unimodularly equivalent
to the FFLV polytope $\calC(\Pi_{3,6})$, which is a lattice polytope, by Theorem~\ref{thm_unimodular_FFLV2}. 
Thus, by Theorem~\ref{thm_NO=C}, $\Delta_{G^\vee}$ is the convex hull of the valuations $\val_{G^\vee}(\overline{p}_J)$ for all $J\in{[6] \choose 3}$. 
Here, Table~\ref{table_dualrec_Gr(3,6)} shows the valuations $\val_{G^\vee}(\overline{p}_J)$, 
which can be obtained by considering the acyclic orientation of $G^\vee$ 
whose source set is $\{1,2,3\}$, or by applying the tropicalized cluster mutations to Table~\ref{table_rec_Gr(3,6)} 
along the edges connecting $G^{\rm rec}_{3,6}$ and $(G^{\rm rec}_{3,6})^\vee$ in Figure~\ref{ex_graph_Gr36_1} and \ref{ex_graph_Gr36_2}. 
We also see that $\Delta_{G^\vee}$ corresponds to the isomorphism class EEFF2. 

\ytableausetup{boxsize=3.5pt}
\begin{table}[H]
\begin{center}
\scalebox{0.9}{
{\renewcommand\arraystretch{1.1}\tabcolsep= 10pt
\begin{tabular}{c|ccccccccc}
&\raise1.5ex\hbox{\ydiagram{3,3,2}}&\raise1.5ex\hbox{\ydiagram{3,3,1}}&\raise1.5ex\hbox{\ydiagram{3,2,2}}
&\raise1.5ex\hbox{\ydiagram{3,1,1}}&\raise0.5ex\hbox{\ydiagram{3}}&\raise1.5ex\hbox{\ydiagram{1,1,1}}&\raise1ex\hbox{\ydiagram{3,3}}&
\raise1.5ex\hbox{\ydiagram{2,2,2}}&\raise1.5ex\hbox{\ydiagram{3,3,3}} \rule[0mm]{0mm}{5mm} \\ \hline
$\{1,2,3\}$&0&0&0&0&0&0&0&0&0 \\ \hline
$\{1,2,4\}$&0&0&0&0&0&0&0&0&1 \\ \hline
$\{1,2,5\}$&1&0&1&0&0&0&0&1&1 \\ \hline
$\{1,2,6\}$&1&1&1&1&0&1&0&1&1 \\ \hline
$\{1,3,4\}$&1&1&0&0&0&0&1&0&1 \\ \hline
$\{1,3,5\}$&1&1&1&0&0&0&1&1&1 \\ \hline
$\{1,3,6\}$&1&1&1&1&0&1&1&1&1 \\ \hline
$\{1,4,5\}$&1&1&1&0&0&0&1&1&2 \\ \hline
$\{1,4,6\}$&1&1&1&1&0&1&1&1&2 \\ \hline
$\{1,5,6\}$&2&1&2&1&0&1&1&2&2 \\ \hline
$\{2,3,4\}$&1&1&1&1&1&0&1&0&1 \\ \hline
$\{2,3,5\}$&1&1&1&1&1&0&1&1&1 \\ \hline
$\{2,3,6\}$&1&1&1&1&1&1&1&1&1 \\ \hline
$\{2,4,5\}$&1&1&1&1&1&0&1&1&2 \\ \hline
$\{2,4,6\}$&1&1&1&1&1&1&1&1&2 \\ \hline
$\{2,5,6\}$&2&1&2&1&1&1&1&2&2 \\ \hline
$\{3,4,5\}$&2&2&1&1&1&0&2&1&2 \\ \hline
$\{3,4,6\}$&2&2&1&1&1&1&2&1&2 \\ \hline
$\{3,5,6\}$&2&2&2&1&1&1&2&2&2 \\ \hline
$\{4,5,6\}$&2&2&2&1&1&1&2&2&3 
\end{tabular}}}
\end{center}
\caption{The valuation $\val_{G^\vee}(\overline{p}_J)$ for any Pl\"{u}cker coordinate $\overline{p}_J$ of $\Gr(3,6)$.}
\label{table_dualrec_Gr(3,6)}
\end{table}

\ytableausetup{boxsize=3.5pt}
Applying the unimodular transformation $g$ in Theorem~\ref{thm_unimodular_FFLV2}, which is described as 
\begin{align*}
&w_{\varnothing}=v_{\ydiagram{3,3,3}}\mapsto 0, \hspace{15pt}
w_{1 \times 1}=v_{\ydiagram{3,3,2}}\mapsto v_{\ydiagram{3,3,3}}-v_{\ydiagram{3,3,2}}, \hspace{15pt}
w_{1 \times 2}=v_{\ydiagram{3,3,1}}\mapsto v_{\ydiagram{3,3,2}}-v_{\ydiagram{3,3,1}}, \\
&w_{1 \times 3}=v_{\ydiagram{3,3}}\mapsto v_{\ydiagram{3,3,1}}-v_{\ydiagram{3,3}}, \hspace{15pt}
w_{2 \times 1}=v_{\ydiagram{3,2,2}}\mapsto v_{\ydiagram{3,3,2}}-v_{\ydiagram{3,2,2}}, \\
&w_{2 \times 2}=v_{\ydiagram{3,1,1}}\mapsto v_{\ydiagram{3,3,1}}+v_{\ydiagram{3,2,2}}-v_{\ydiagram{3,1,1}}-v_{\ydiagram{3,3,2}},  \hspace{15pt}
w_{2 \times 3}=v_{\ydiagram{3}}\mapsto v_{\ydiagram{3,3}}+v_{\ydiagram{3,1,1}}-v_{\ydiagram{3}}-v_{\ydiagram{3,3,1}}, \\
&w_{3 \times 1}=v_{\ydiagram{2,2,2}}\mapsto v_{\ydiagram{3,2,2}}-v_{\ydiagram{2,2,2}},  \hspace{15pt}
w_{3 \times 2}=v_{\ydiagram{1,1,1}}\mapsto v_{\ydiagram{3,1,1}}+v_{\ydiagram{2,2,2}}-v_{\ydiagram{1,1,1}}-v_{\ydiagram{3,2,2}}, \\
&w_{3 \times 3}=v_{\varnothing}\mapsto v_{\ydiagram{3}}+v_{\ydiagram{1,1,1}}-v_{\varnothing}-v_{\ydiagram{3,1,1}}, 
\end{align*}
to these vectors, we have $g(\Delta_{G^\vee})=\calC(\Pi_{3,6})$. 

\subsection*{Acknowledgement} 
The authors would like to thank the organizers of the online workshop ``The McKay correspondence, Mutation and related topics" 
for giving us opportunities to talk concerning the combinatorial mutations, and to write this article for the proceeding. 
The authors would also like to thank Lara Bossinger and Xin Fang for providing useful comments on an earlier draft of the manuscript, 
and thank the anonymous referee for valuable comments and suggestions. 

The first author is supported by JSPS Grant-in-Aid for Scientific Research (C) 20K03513. 
The second author is supported by JSPS Grant-in-Aid for Young Scientists (B) 17K14159, and JSPS Grant-in-Aid for Early-Career Scientists 20K14279. 


\end{document}